\documentclass[11pt]{amsart}
\usepackage{amsmath}
\usepackage{amsfonts}
\usepackage{amssymb}

\usepackage[T1]{fontenc}
\usepackage[ansinew]{inputenc}
\usepackage{amssymb,amsmath,amsthm,eucal,hyperref,mathrsfs,array,graphicx,xcolor,bbm,enumerate}
\usepackage[all]{xy}
\usepackage{fullpage}
%\usepackage{subcaption}
%%%%%%%%%%%
%\textwidth 173mm \textheight 235mm \topmargin -50pt \oddsidemargin -0.45cm \evensidemargin -0.45cm

%%%%%%%%%%%
\newtheorem{thm}{Theorem}
\newtheorem{lemma}[thm]{Lemma}
\newtheorem{rem}[thm]{Remark}
\newtheorem{defn}[thm]{Definition}
\newtheorem{prop}[thm]{Proposition}
\newtheorem{cor}[thm]{Corollary}
\newtheorem{claim}[thm]{Claim}
\newcommand{\Q}{\mathbb Q}
\newcommand{\Z}{\mathbb Z}
\newcommand{\R}{\mathbb R}
\renewcommand{\H}{\mathbb H}
\newcommand{\HH}{\mathbb H}
\newcommand{\N}{\mathbb N}
\newcommand{\D}{\mathbb D}
\newcommand{\E}{\mathbb E}
\newcommand{\I}[1]{\mathbf{1}_{\left \{#1\right \}}}
\renewcommand{\P}{\mathbb P}
\renewcommand{\1}{\mathbf 1}
\newcommand{\eps}{\epsilon}
\newcommand{\crad}{\text{crad}}
\newcommand{\sgn}{\text{sign}}
\newcommand{\CLE}{\text{CLE}}

\renewcommand{\d}{{d}}

\newcommand{\revise}[1]{{#1}}

\begin{document}
\title{On bounded-type thin local sets \\ of the two-dimensional Gaussian free field}
\author{Juhan Aru, Avelio Sep\'ulveda, Wendelin Werner}

\address{Department of Mathematics, ETH Z\"urich, R\"amistr. 101, 8092 Z\"urich, Switzerland} 

\email{juhan.aru@math.ethz.ch}
\email{leonardo.sepulveda@math.ethz.ch}
\email{wendelin.werner@math.ethz.ch}

\begin {abstract}
We study  certain classes of local sets of the two-dimensional Gaussian
free field (GFF) in a simply-connected domain, and their relation to
the conformal loop ensemble CLE$_4$ and its
variants. More specifically, we consider bounded-type thin local sets (BTLS), where thin means that the local set is small in size, and bounded-type means that the harmonic function describing the mean value of the field away from the local set is
bounded by some deterministic constant.
We show that a local set is a BTLS if and only if it is contained in some nested version of the CLE$_4$ carpet, and prove that all BTLS are necessarily connected to the boundary of the domain.
We also construct all possible BTLS for which the corresponding harmonic function
takes only two prescribed values and
show that all these sets (and this includes the case of CLE$_4$) are in fact measurable functions of the GFF. 
\end {abstract}
\maketitle 

\section {Introduction}

\subsection{General introduction}

Thanks to the recent works of Schramm, Sheffield, Dub\'edat, Miller and others (see \cite {SchSh,SchSh2,Dub,ShQZ,MS1,MS2,MS3,MS4} and the references therein), it is known
that many structures built using Schramm's SLE curves  can be naturally coupled with the planar Gaussian Free Field (GFF). For instance, even though the GFF is not a continuous function, SLE$_4$  and the conformal loop ensemble CLE$_4$ can be viewed as  ``level lines'' of the GFF.
In these couplings, the notion of {\em local sets of the GFF} and their properties turn out to be instrumental.
This general abstract concept appears already in the study of Markov random field in the 70s and 80s (see in particular \cite{Roz}) and can be viewed as the natural generalization of 
stopping times for multidimensional time.  
More precisely, if the random generalized function $\Gamma$ is a (zero-boundary) GFF in a  planar domain $D$, a random set $A$  is said to be a local set for $\Gamma$
if  the conditional distribution of the GFF in $D \setminus A$ given $A$ has the law of the sum of a (conditionally) independent GFF $\Gamma^A$ in $D \setminus A$ with
a random harmonic function $h_A$ defined in $D \setminus A$. This harmonic function can be interpreted as the harmonic extension to $D \setminus A$ of the values of the GFF on $\partial A$.

In the present paper, we are interested in a special type of local sets 
that we call {\em bounded-type thin local sets} (or BTLS -- i.e., Beatles). Our definition of BTLS imposes three type of conditions, on top of being local sets: 
\begin {enumerate} 
\item There exists a constant $K$ such that almost surely, $| h_A | \le  K$ in the complement of $A$. 
\item The set $A$ is a thin local set as defined in \cite{Se,WWln2}, which implies in particular that the harmonic function $h_A$ carries itself all the information about the
GFF also on $A$. More precisely, this means here (because we also have the first condition) that for any smooth test function $f$, the random variable $(\Gamma,f)$ is almost surely equal to  $(\int_{D \setminus A }  h_A(x) f(x) dx) + (\Gamma^A, f)$. 
\item Almost surely, each connected component of $A$ that does not intersect $\partial D$ has a neighbourhood that does intersect no other connected component of $A$. 
\end {enumerate}
If the set $A$ is a BTLS with constant $K$ in the first condition, we say that it is a $K$-BTLS.

The first two conditions are the key ones, and they appear at first glance antinomic (the first one tends to require the set $A$ not to be too small, while the second requires it to be 
small), so that it is not obvious that such BTLS exist at all. Let us stress that Condition (1) is highly non-trivial: The GFF is only a generalized function and we loosely speaking require the GFF here to be bounded by a constant on $\partial A$. Notice for instance that a deterministic non-polar set is not a BTLS because the corresponding harmonic function is not bounded, so that a non-empty BTLS is necessarily random. 
As we shall see, the second condition which can be interpreted as a condition on the size of the set $A$ (i.e., it can not be too large), can be compared in the previous analogy between local sets and stopping times to requiring stopping times to be uniformly integrable. 

The third condition implies in particular that $A$ has only countably many connected components that do not intersect $\partial D$. This  is somewhat restrictive as there exist totally disconnected sets that are non-polar for the Brownian motion (for instance a Cantor set with Hausdorff dimension in $(0,1)$), and that can therefore have a non-negligible boundary effect for the GFF. We however believe that this third condition is not essential and could be disposed of (i.e., all statements would still hold without this condition), and we comment on this at the end of the paper. 

We remark that the precise choice of definitions is not that important here. We will see, in fact, that the combination of these three conditions implies much stronger statements. For instance, the (upper) Minkowski dimension of such a $K$-BTLS $A$ is necessarily bounded by some constant $d(K)$ that is smaller than 2 (and this is stronger than the second condition), and that $\partial D \cup A$ has almost surely only one connected component (which implies the third condition). 
 
Non-trivial BTLS do exist, and the first examples are provided by SLE$_4$, CLE$_4$ and their variants (for their first natural coupling with the GFF) as shown in \cite {Dub,MS,SchSh2}.
In the present paper, among other things, we prove that any BTLS $A$ is contained in a nested version of CLE$_4$ and that $A \cup \partial D$ is necessarily connected. From our proofs it also follows that the CLE$_4$ and its various generalizations are deterministic functions of the GFF.
Keeping the stopping time analogy in mind, one can compare these results with the following feature of one-dimensional Brownian motion $B$ started from the origin: 
if $T$ is a stopping time with respect to the filtration of $B$ such $|B_T| \le K$ almost surely, then either $T \le \inf \{ t  \ge  0  :  |B_t| = K \}$ almost surely, or $T$ is not integrable.
In other words, the CLE$_4$ and its nested versions and variants are the field analogues of one-dimensional exit times of intervals.  

One important general property of local sets, shown in \cite {SchSh2} and used extensively in \cite {MS1}, is that when $A$ and $B$ are two local sets coupled with the same GFF, in such a way 
that $A$ and $B$ are conditionally independent given 
the GFF, then their union $A \cup B$ is also a local set. It seems quite natural to expect that it should be possible to describe the harmonic function $h_{A \cup B}$ simply in terms of $h_A$
and $h_B$, but deriving a general result in this direction appears to be, somewhat surprisingly, tricky. 
The present approach provides a way to obtain results in this direction in the case of BTLS: we shall see that the union of two bounded-type thin local sets is always a BTLS.
 
In further work, we plan to investigate some features of our approach to the imaginary geometry setting,
and hope that this might be of some help to enlighten some of the technical issues dealt with in \cite {MS1,MS2,MS3}.

\subsection{An overview of results}

We now state more precisely some of the results that we shall derive. Throughout the present section, $D$ will denote a simply connected domain with non-empty boundary (so that $D$ is conformally equivalent to the unit disk). 

Let us first briefly recall some features of the coupling between CLE$_4$ and GFF in such a domain $D$. 
Using a branching-tree variant of SLE$_4$ introduced by Sheffield in \cite{She}, it is possible to define a certain random conformally invariant family of marked open sets $(O_j, \eps_j)$, where the $O_j$'s form 
a disjoint family of open subsets of the upper half-plane, and the marks $\eps_j$ belong to $\{-1, 1\}$. The complement $A$ of $\cup_j O_j$ is called a CLE$_4$ carpet (see Figure \ref {figcle4}) and 
its Minkowski dimension is in fact almost surely  equal to $15/8$ (see \cite{NW,SchShW}). As pointed out 
by Miller and Sheffield \cite {MS}, the set $A$ can be coupled with the GFF as a BTLS in a way that $h_A$ is constant and equal to $2\lambda \eps_j$ (with $\lambda=\sqrt{\pi/8}$) in each $O_j$.
\begin{figure}[ht!]
\begin{center}
\includegraphics[width=3.1in]{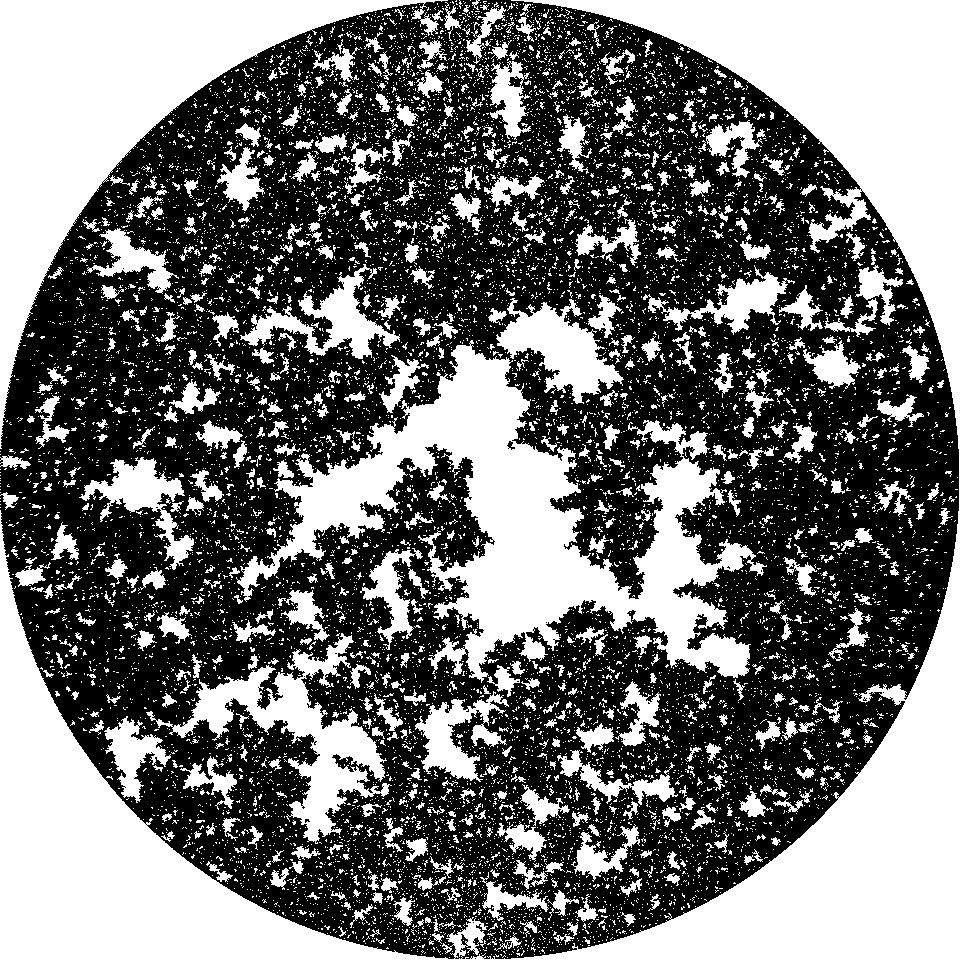}
\begin{flushright}
	\footnotesize  Simulation by David B. Wilson.
\end{flushright}
\caption{\label{figcle4}  The set in black represents the CLE$_4$ carpet and the white holes are the open sets $O_j$. }
\end{center}
\end{figure} 

By the property of local sets, conditionally on $A$,  the field $\Gamma - h_A$ consists of independent Gaussian free fields  $\Gamma^j$ inside each $O_j$. 
We can then iterate the same construction independently for each of these GFFs $\Gamma^j$, using a new CLE$_4$ in each $O_j$. 
In this way, for each given $z \in \HH$, if we define $O^1 (z)$ to be the open set $O_j$ that contains $z$ (for each fixed $z$, this set almost surely exists) and set $\eps^1 (z) = \eps_j$,
we then get a second set $O^2 (z) \subset O^1 (z)$ corresponding to the nested CLE$_4$ in $O^1 (z)$ and a new mark $\eps^2 (z)$. Iterating the procedure, we  
obtain for each given $z$, an almost surely decreasing sequence of open sets $O^n (z)$ and a sequence of marks $\eps^n (z)$ in $\{ -1 , + 1\}$. When $z' \in O^n (z)$, then $O^n (z')= O^n (z)$ and  
$\eps^n (z') = \eps^n (z) $, so that $\eps^n (\cdot)$ can be viewed as a constant function in $O^n (z)$. 
Furthermore, for each $z \in D$, the sequence $\Upsilon_n (z)= \sum_{m=1}^n \eps^m (z)$ is a simple random walk. 

For each $n$, the complement $A_n$ of the union of all $O^n (z)$ with $z \in \Q^2 \cap D$ 
is then a BTLS and the corresponding harmonic function $h_n$ is just $2 \lambda \Upsilon_n ( \cdot)$. The set $A_n$ is called the nested CLE$_4$ of level $n$ 
(we refer to it as CLE$_{4,n}$ in the sequel). It is in fact easy to see 
that one can recover the GFF $\Gamma$ from the knowledge of all these pairs $(A_n, h_n)$ for $n \ge 1$ (because for each smooth test function $f$, the sequence $(\Gamma, f) - (h_n, f)$  converges to $0$ in $L^2$). 

For each integer $M \ge 1$, one can then define for each $z$ in $\HH$, the random variable $M(z)$ to be the smallest $m$ at which $| \Upsilon_m (z) | = M$. As for each fixed $z$, the sequence $(\Upsilon_n (z))$
is just a simple random walk, $M(z)$ is almost surely finite. 
The complement $A^M$ of the union of all these $O^{M(z)} (z)$ for $z$ with rational coordinates defines a $2\lambda M$-BTLS: 
Indeed, the corresponding harmonic function is then constant in each connected component $O^{M(z)}(z)$ and takes values in  $\{ 2M \lambda, -2M \lambda \}$, and 
we explain in Section \ref{CLE4} that the Minkowski dimension of $A^M$ is almost surely bounded by  $d=2 - (1/ (8M^2)) < 2$ which imply the thinness. 
We refer to this set $A^M$ as a CLE$_4^M$ (mind that $M$ is in superscript, as opposed to CLE$_{4,m}$, where we just iterated $m$ times the CLE$_4$). 

A special case of our results is that this $\CLE_4^M$ is the only BTLS with harmonic function taking its values in $\{ -2M \lambda, 2M \lambda\}$. More precisely: 
\begin{prop}\label{cledesc}
Suppose that a CLE$_4^M$ that we denote by $C$ is coupled with a GFF in the way that we have just described. 
Suppose that $A$ is a BTLS coupled to the same GFF $\Gamma$, such that the corresponding harmonic function $h_A$ takes also its values in $\{ - 2 M \lambda, 2M \lambda \}$.
Then, $A$ is almost surely  equal to $C$. 
\end {prop}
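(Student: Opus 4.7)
The plan is to compare $A$ with $C$ through their union $B := A \cup C$, which by the union-of-BTLS property recalled in the introduction is itself a BTLS. I aim to establish $h_A \equiv h_B \equiv h_C$ on $D \setminus B$ via a variance/Jensen argument, and then to upgrade this equality of harmonic functions to the set-theoretic identity $A = C$ using the thin condition.

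Decomposing $\Gamma = h_A + \Gamma^A$ via the thin property of $A$, one sees that $C$ intersected with each component $U$ of $D \setminus A$ is a local set of $\Gamma^A|_U$, whose boundary values on $C$ are $h_C - h_A|_U$. Consequently $h_B|_W$ on each component $W$ of $D \setminus B$ is a harmonic-measure-weighted convex combination of $h_A|_{U(W)}$ and $h_C|_{O(W)}$, where $U(W)$ and $O(W)$ are the components of $D \setminus A$ and $D \setminus C$ containing $W$. Since both $h_A$ and $h_C$ take values in $\{\pm 2M\lambda\}$, the maximum principle yields $|h_B(z)| \le 2M\lambda$ for every $z \in D \setminus B$. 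Now fix $z \in D$ (so $z \notin B$ a.s.\ by the thin property) and use the tower identity $h_A(z) = \E[h_B(z) \mid \sigma(A)]$, which follows from $\sigma(A) \subseteq \sigma(A) \vee \sigma(C)$ and the circle-average characterization of $h_L(z)$ as the $L^2$-limit of $\E[\Gamma_\varepsilon(z) \mid \sigma(L)]$. Conditional Jensen then gives
\[
(2M\lambda)^2 = \mathrm{Var}(h_A(z)) \le \mathrm{Var}(h_B(z)) \le (2M\lambda)^2,
\]
the first equality using $h_A(z) \in \{\pm 2M\lambda\}$ with mean zero, the last step using the bound just established. Equality throughout forces $|h_B(z)| = 2M\lambda$ almost surely, and equality in conditional Jensen gives $h_B(z) = h_A(z)$ almost surely. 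Taking $z$ in a countable dense subset of $D$ and invoking the harmonicity of $h_B - h_A$ on components of $D \setminus B$ yields $h_A \equiv h_B$ on $D \setminus B$; by the symmetric argument $h_C \equiv h_B$, and hence $h_A \equiv h_C$ there.

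To convert this equality of harmonic functions into $A = C$, I look inside each component $U$ of $D \setminus A$: the set $C \cap \overline U$ is a thin local set of the zero-boundary GFF $\Gamma^A|_U$ with harmonic function $h_B - h_A \equiv 0$. The standard Green's-function variance identity for thin local sets ($G_U \ge \E[G_{U \setminus L}]$ with equality iff $L$ is polar) then forces $C \setminus A$ to be polar; symmetrically, $A \setminus C$ is polar. The third BTLS condition, which prevents accumulation of isolated non-boundary components, together with the positive Hausdorff dimension of the $\CLE_4^M$ carpet $C$, rules out such polar discrepancies, yielding $A = C$ almost surely. The main obstacle is this final conversion: the variance argument cleanly delivers equality of harmonic functions, but upgrading to set identity relies crucially on the ``zero harmonic contribution implies polar'' principle for thin local sets and on the third BTLS condition to eliminate polar pathologies.
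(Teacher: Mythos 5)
Your overall architecture (bound $h_{A\cup C}$, force equality of harmonic functions by a second-moment/Jensen argument, then upgrade to set equality via a Green's-function comparison showing the symmetric difference is polar, and finally use condition (3') to kill polar discrepancies) is in outline the same as the paper's: your last two steps are exactly the content and the proof of the paper's Lemma \ref{mes}, applied once in each direction. However, there is a genuine gap at the step you treat as routine, namely the assertion that $h_{A\cup C}$ restricted to a component $W$ of $D\setminus(A\cup C)$ is a harmonic-measure-weighted convex combination of $h_A$ and $h_C$, whence $|h_{A\cup C}|\le 2M\lambda$ by the maximum principle. This is precisely the point the paper flags in the introduction as ``somewhat surprisingly, tricky'': the boundary values of $h_{A\cup C}$ on $\partial W$ are only known to agree with those of $h_A$ (resp.\ $h_C$) near boundary points of $A$ at positive distance from $C$ (resp.\ of $C$ at positive distance from $A$) --- this is Lemma 3.11 of \cite{SchSh2} --- and nothing is known a priori at prime ends where $A$ and $C$ touch or accumulate on one another. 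If that ``entangled'' part of $\partial W$ carried positive harmonic measure, your maximum-principle bound would fail; indeed the only general control available there is the $\log(1/d)$ blow-up bound coming from thick-point estimates.

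The paper's proof exists essentially to close this gap: it uses the radial SLE$_4(-2)$ exploration-tree construction of $C$ together with the boundary-hitting estimate for generalized level lines (Lemma \ref{notouch}) to show (Claim \ref{loopsclose}) that a loop of $C$ being traced inside a component $O_A(z)$ of $D\setminus A$ must close up before exiting $O_A(z)$, so that the bad prime ends of each component $W$ reduce to at most two (the start and end of the relevant loop); Lemma \ref{finite points} then shows that finitely many bad points carry no harmonic mass in the limit, yielding $|h_{A\cup C}|\le 2M\lambda$. Your proposal makes no use of the exploration-tree structure of $\mathrm{CLE}_4^M$ or of any level-line boundary-hitting input, which is a strong sign something is missing: the statement genuinely needs the specific construction of $C$, not just that it is a BTLS with two-valued harmonic function. (Two smaller points: you should assume $A$ and $C$ conditionally independent given $\Gamma$ in order for $A\cup C$ to be a local set and for the tower identity $h_A(z)=\E[h_{A\cup C}(z)\mid\mathcal F_A]$ of Lemma \ref{BPLS} to apply; and the intermediate claim that $C\cap \overline U$ is a local set of $\Gamma^A|_U$ with boundary values $h_C-h_A$ on $C$ suffers from the same entanglement issue as above.)
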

In particular, for $M=1$ and taking $A$ to be another CLE$_4$ coupled with the same GFF, this implies that any two $\CLE_4$ that are coupled with the same GFF as local sets with harmonic function in $\{ -2 \lambda, 2 \lambda \}$ are almost surely identical. 
This BTLS approach therefore provides a rather short proof of the fact that {\em the first layer CLE$_4$ and therefore also all nested CLE$_{4,m}$'s are deterministic functions of the GFF} (hence, that the information provided 
by the collection of nested labelled CLE$_{4,m}$'s is equivalent to the information provided by the GFF itself). 
This fact is not new and is due to Miller and Sheffield  \cite {MS}, who have outlined the proof in private discussions, presentations and talks (and a paper in preparation). 
Our proof follows a somewhat different route than the one proposed by Miller and Sheffield,  although the basic ingredients are similar (absolute continuity properties of the GFF, basic properties of local sets and the fact that SLE$_4$ itself is a deterministic function of the GFF).

Let us stress that the condition of being thin is important.
Indeed, if we consider the union of a CLE$_4$ with, say, the component of its complement that
contains the origin, we still obtain a local set for which the corresponding harmonic function is in  $\{ - 2  \lambda, 2 \lambda \}$, yet it is almost surely clearly not contained in any of the $\CLE_4^M$
for $M\ge 1$.

We will also characterize all possible BTLS such that the harmonic function can take only two possible prescribed values, and study their properties. We will in particular derive the following facts: 
\begin {prop}
 \label {cledesc2}
Let us consider $-a < 0 < b$. 
\begin {enumerate}
\item
When $a+b  < 2 \lambda$, it is not possible to construct a BTLS $A$ such that $h_A \in \{ -a , b \}$ almost surely.
\item
When $a+b \ge 2 \lambda$, it is possible to construct a BTLS $A_{-a,b}$ coupled with a GFF $\Gamma$ in such a 
way that $h_A \in \{ -a, b \}$ almost surely. Moreover, the sets $A_{-a,b}$ are
\begin{itemize} 
	\item Unique in the sense that if $A'$ is another BTLS coupled with the same $\Gamma$,  such that for all $z \in D$, $h_{A'} (z) \in \{ -a ,b \}$ almost surely, 
	then $A' = A_{-a, b}$ almost surely.  
	\item Measurable functions of the GFF $\Gamma$ that they are coupled with.
	\item Monotonic in the following sense: if $[a,b] \subset [a', b']$ and $-a < 0 < b$ with $b+a \ge 2\lambda$,  then almost surely, $A_{-a,b} \subset A_{-a', b'}$. 
\end{itemize}  
\end {enumerate}
\end {prop}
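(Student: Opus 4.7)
The proof naturally splits into four parts: the non-existence assertion in (1), the construction of $A_{-a,b}$, the uniqueness statement (which implies measurability), and monotonicity.

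For non-existence in (1), the plan is to derive a contradiction via coupling with a CLE$_4$. Suppose a BTLS $A$ existed with $h_A\in\{-a,b\}$ and $a+b<2\lambda$. Coupling $A$ with a CLE$_4$ $C$ for the same GFF $\Gamma$ (so that $h_C\in\{-2\lambda,2\lambda\}$), the union $A\cup C$ is again a BTLS by the union property of local sets. Examining the structure inside a component $O$ of $D\setminus C$ with, say, $h_C|_O=2\lambda$, conditionally on $C$ the field $\Gamma|_O-2\lambda$ is a zero-boundary GFF on $O$, and the intersection of $A$ with $O$ together with $\partial O$ should inherit a BTLS structure for this internal GFF whose harmonic function takes values in $\{-a-2\lambda,b-2\lambda\}$, both strictly negative under $b<2\lambda$. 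Since the harmonic function of any local set of a zero-boundary GFF has zero mean, values of definite sign are impossible unless the set is trivial; iterating inside the nested CLE$_4$ loops then forces $A$ to be empty, contradicting $h_A\in\{-a,b\}$.

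For existence in (2), when $a$ and $b$ are integer multiples of $2\lambda$, the construction is a direct analog of the $\CLE_4^M$ construction from Section~\ref{CLE4}: define $\tau(z)$ as the first $n\geq 1$ with $\Upsilon_n(z)\in\{-a,b\}$, a.s.\ finite by gambler's ruin because $a+b\geq 2\lambda$, and set $A_{-a,b}:=D\setminus\bigcup_{z\in\Q^2\cap D}O^{\tau(z)}(z)$. The three BTLS conditions transfer from those of the underlying nested CLE$_4$, with the Minkowski-dimension bound coming from the same estimates as for $\CLE_4^M$. For general $a,b$ with $a+b\geq 2\lambda$, one combines an initial SLE$_4(\rho,\rho')$-level-line layer whose force-point weights are calibrated so that the outermost complementary components carry the required boundary values $-a$ and $b$, followed by nested CLE$_4$-based refinements within each such component, iterated until the harmonic function stabilizes to $\{-a,b\}$.

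Uniqueness, and hence measurability, is established as in Proposition~\ref{cledesc}: given two BTLS $A, A'$ coupled to the same $\Gamma$ with harmonic functions in $\{-a,b\}$, the union $A\cup A'$ is again a BTLS, and component-by-component analysis shows that any discrepancy between $h_A$ and $h_{A'}$ on a component of $D\setminus(A\cup A')$ gives rise to an induced BTLS of an internal GFF whose harmonic function takes two values with spread strictly less than $2\lambda$, contradicting part (1). Measurability then follows since the construction applied with any conditionally independent SLE/CLE input produces the same set. Monotonicity is handled analogously: given $[-a,b]\subset[-a',b']$ with both sets coupled to the same GFF, the restriction of $A_{-a',b'}$ to each component of $D\setminus A_{-a,b}$ produces, after subtracting the value of $h_{A_{-a,b}}$, a BTLS of the internal GFF with harmonic function in a strictly larger interval; uniqueness applied at each layer of iteration then gives $A_{-a,b}\subset A_{-a',b'}$ almost surely.

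The main obstacle is the non-existence argument in (1): making rigorous the intuitive ``minimum $2\lambda$ gap'' principle for an arbitrary BTLS (not generated by an explicit SLE-type process) requires careful manipulation of the conditional structure of $A\cap O$ inside a CLE$_4$ loop. In particular, one must verify that this intersection genuinely yields a BTLS of the internal zero-boundary GFF with the claimed two-valued harmonic function, which is delicate and leans essentially on the thinness condition. A secondary obstacle is establishing the required thinness and Minkowski dimension bound for the SLE$_4(\rho,\rho')$-based initial layer in the general-parameter existence construction.
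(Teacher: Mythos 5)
Your overall strategy (level lines plus comparison with nested $\CLE_4$ plus zero-mean arguments) is in the right family, but there are genuine gaps in all three delicate places. The most serious is part (1): even granting the delicate ``restriction'' step you flag, your argument only yields $A\cap O=\emptyset$ for every complementary component $O$ of the $\CLE_4$, i.e.\ $A\subseteq \CLE_4$, and this is \emph{not} a contradiction -- a nonempty thin local set can perfectly well sit inside the $\CLE_4$ carpet (the carpet itself does), and there are no nested loops left to ``iterate inside'' once $A$ avoids every $O$. The threshold $2\lambda$ is not visible from the $\CLE_4$ comparison alone. The paper's route is structurally different: it extends $A$ by exploring an independent $A_{-a-b,-a-b+2\lambda}$ inside each component where $h_A=b$, concludes by uniqueness that the resulting set is $A_{-a,-a+2\lambda}$, and then derives the contradiction from a structural property of the critical two-valued sets $A_{c-\lambda,c+\lambda}$ (every interior boundary arc is a level line shared by two complementary components with values differing by exactly $2\lambda$), which an untouched $-a$-component of $D\setminus A$ would violate. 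Some such input about the gap-$2\lambda$ sets seems unavoidable.

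The other two gaps: (i) your general-parameter construction does not exist as described -- a single SLE$_4(\rho;\rho')$ level line is a $c-\lambda$ versus $c+\lambda$ interface, so no calibration of force points produces complementary components carrying two prescribed values with gap strictly larger than $2\lambda$ in one layer; the paper needs the full multi-stage iteration (first $A_{c-\lambda,c+\lambda}$, then alternating explorations of $A_{-d,-d+n_2\lambda}$ and $A_{d-n_2\lambda,d}$ in the appropriate components), together with a separate argument for thinness (containment in some $\CLE_4^M$ at every finite stage, or the conformal-radius martingale computation of Section \ref{S6}). (ii) Your uniqueness and monotonicity arguments both invoke part (1) applied to induced value sets that contain $0$ (e.g.\ $\{-a-b,0\}$), a case part (1) explicitly excludes (the empty set realizes it), and both lean on the unproven claim that intersecting a BTLS with a complementary component of another local set yields a two-valued BTLS of the internal GFF -- the induced harmonic function actually has mixed boundary values. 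The paper avoids restriction entirely: for uniqueness it shows via the boundary-hitting Lemma \ref{notouch} that no level line used to build $A_{-a,b}$ can make an excursion into a component of $D\setminus\tilde A$ (since all level lines used carry boundary data in $[-a,b]$ and are simple), giving $A_{-a,b}\subseteq\tilde A$, and then applies Lemma \ref{mes} with $k=(b-a)/2$ for the reverse inclusion; for monotonicity it extends $A_{-a,b}$ by further explorations to a set with values in $\{-a',b'\}$ and invokes uniqueness. Your derivation of measurability from uniqueness is fine in principle, though the paper gets it for free since the construction uses only countably many GFF-measurable level lines.
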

More information about the sets $A_{-a,b}$ and their properties (detailed construction, dimensions) as well as some generalizations are discussed in Section \ref {S6}. \revise{In particular, we present a new construction of CLE$_4$ only using SLE$_4(-1;-1)$ and SLE$_4(-1)$ processes.} 
Further properties of these sets will be described in \cite {AS}. 
The sets $A_{-a, b}$ are also instrumental in  \cite {ALS,QW}. 

Another type of result that we derive using similar ideas as in the proof of Proposition \ref {cledesc} goes as follows:
\begin{prop}\label{maxbtls}
If $B$ is a $(2M\lambda)$-BTLS associated to the GFF $\Gamma$, then $B$ is almost surely a subset of the $\CLE_4^{M+1}$  associated to $\Gamma$. 
\end{prop}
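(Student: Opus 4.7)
The plan is to consider the union local set $A:=B\cup C$, where $C:=\CLE_4^{M+1}$, and to prove via a conditional expectation identity combined with a maximum principle that $h_A\equiv h_C$; this will force $B\subset C$.

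By Proposition \ref{cledesc} (applied with parameter $M+1$), $C$ is a measurable function of $\Gamma$, so $B$ and $C$ are trivially conditionally independent given $\Gamma$, and their union $A$ is a local set for $\Gamma$ with $C\subset A$ as sets. Applying the tower property for conditional expectations (where conditioning on $C$ is a coarsening of conditioning on the natural $\sigma$-algebra associated to $A$) gives
\begin{equation*}
\E[h_A \mid C]\;=\;\E\bigl[\E[\Gamma\mid A]\,\big|\,C\bigr]\;=\;\E[\Gamma\mid C]\;=\;h_C
\end{equation*}
in the distributional sense on $D\setminus C$. Since $B$ is thin, the set $B\setminus C$ has zero Lebesgue measure, so this reads
\begin{equation*}
\E\!\left[\int (h_A-h_C)\,f\,dx\,\Big|\,C\right]\;=\;0
\end{equation*}
for every smooth $f$ supported in $D\setminus C$.

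Next, fix a component $U$ of $D\setminus C$ with sign $\eta\in\{\pm 1\}$, so that $h_C\equiv 2(M+1)\lambda\eta$ on $U$. For each connected component $V$ of $U\setminus B=U\setminus A$, the function $h_A|_V$ is harmonic in $V$; by the standard structural description of the harmonic function of a union of local sets (as in \cite{SchSh2,MS1}), its boundary values are $h_B$ on $\partial V\cap B$ and $h_C=2(M+1)\lambda\eta$ on $\partial V\cap C$. Hence $h_A-h_C$ is harmonic in $V$, vanishes on $\partial V\cap C$, and equals $h_B-2(M+1)\lambda\eta$ on $\partial V\cap B$. The crucial inequality $|h_B|\le 2M\lambda<2(M+1)\lambda$ forces these latter boundary values to have strict sign $-\eta$ and magnitude at least $2\lambda$; the maximum principle then yields $\eta(h_A-h_C)\le 0$ on $V$, with strict inequality everywhere on $V$ as soon as $\partial V\cap B\neq\emptyset$.

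Combining this sign information with the martingale identity above: for any nonnegative test function $f$ supported in a fixed $U$ with $\eta=+1$, the random variable $\int_U(h_C-h_A)\,f\,dx$ is almost surely nonnegative with vanishing conditional expectation given $C$, hence equals $0$ almost surely. Thus $h_A\equiv h_C$ on each component $V$; combined with the strict-sign boundary condition, this forces $\partial V\cap B=\emptyset$, so $V=U$ and $B\cap U=\emptyset$. The case $\eta=-1$ is symmetric, and since this conclusion applies in each component $U$ of $D\setminus C$, we obtain $B\subset C=\CLE_4^{M+1}$. The main subtlety is the clean identification of the boundary values of $h_A$ on the possibly fractal sets $\partial B\cap U$ and $\partial V\cap C$; this is where the BTLS hypotheses (bounded harmonic function and thinness) are essential, allowing one both to invoke the union-of-local-sets structure theorems and to neglect the Lebesgue-null discrepancies in the integrations.
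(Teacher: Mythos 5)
Your two-step skeleton (first bound $h_{B\cup C}$ by $\pm 2(M+1)\lambda$, then upgrade a conditional-expectation identity to the inclusion $B\subseteq C$) is exactly the paper's outline, and your first-moment/sign argument in the last step is a legitimate lighter alternative to the paper's Lemma~\ref{mes}, which instead runs a second-moment comparison of Green's functions and only needs the inequality $|h_C|\ge |h_{B\cup C}|$ rather than the full boundary-value decomposition.

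However, there is a genuine gap precisely where you invoke ``the standard structural description of the harmonic function of a union of local sets'' to claim that $h_{B\cup C}$ has boundary values $h_B$ on $\partial V\cap B$ and $2(M+1)\lambda\eta$ on $\partial V\cap C$. The available general result (Lemma 3.11 of \cite{SchSh2}) only identifies the boundary values of $h_{B\cup C}$ at prime ends having a neighbourhood that meets \emph{one} of the two sets; it says nothing near points of $\overline B\cap\overline C$ or where one set accumulates on the other, and a priori a loop of $C$ could weave in and out of a component of $D\setminus B$, producing a dense set of such uncontrolled prime ends at which $h_{B\cup C}$ could exceed $2(M+1)\lambda$ in absolute value. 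Ruling this out is the technical heart of the paper's proof: it uses the SLE$_4(-2)$ exploration-tree construction of $C$, the boundary-hitting criterion for generalized level lines (Lemma~\ref{notouch}, which shows that a loop being traced inside a component $O_B(z)$ must close up before touching $\partial O_B(z)$, Claim~\ref{loopsclose}), and then Lemma~\ref{finite points} to absorb the finitely many remaining bad prime ends (the start and end points of the loop). Without this input your maximum-principle step is unjustified, and the same unproved boundary-value description is silently reused in your final step when you argue that $h_A\equiv h_C$ forces $\partial V\cap B=\emptyset$ (where one also needs a polarity argument plus condition (3') to pass from ``$B\cap U$ invisible to the harmonic function'' to ``$B\cap U=\emptyset$'', which is what Lemma~\ref{mes} packages).
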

Notice that one would expect to conclude that $B \subseteq \CLE_4^{M}$ in Proposition \ref {maxbtls} (and this would mean that $\CLE_4^M$ is maximal among all $2\lambda M$-BTLS), but this seems to require some more technical work that we do not discuss in the present paper. 

{As a finite collection of BTLS is in fact a collection of $2\lambda M$-BTLS for some $M\in \N$, we see that  their union is almost surely contained in $\CLE_4^{M+1}$, and thus it is again a BTLS}. This type of facts helps to derive the following result, that we already mentioned earlier in this introduction:
\begin {prop}
\label {propnodisconnection}
If $A$ is a BTLS, then $A \cup \partial D$ is connected. 
\end {prop}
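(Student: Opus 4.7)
Plan: I argue by contradiction. Suppose $A$ is a BTLS such that $A \cup \partial D$ is disconnected. Since $\partial D$ is connected, there is a non-empty closed set $F \subsetneq A$ at positive distance from both $\partial D$ and $A \setminus F$. Condition (3) of the BTLS definition allows me to take $F$ to be a single connected component of $A$ and to choose a connected open neighborhood $U$ of $F$ with $\overline U \subset D$, $\overline U \cap A = F$, and therefore $\partial U \subset D \setminus A$.

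The main input is Proposition \ref{maxbtls}: there exists $M \ge 0$ with $A \subset \CLE_4^{M+1} =: C$. Since $C$ is a nested CLE$_4$ carpet, $C \cup \partial D$ is connected (a fact about $\CLE_4^{M+1}$ one would need to pin down explicitly from the branching CLE$_4$ construction). Hence there is a path in $C$ joining $F$ to $\partial D$, which must cross $\partial U$. This shows that $C \setminus A$ contains points in $U \setminus F$: a portion of the $\CLE_4^{M+1}$ carpet that is entirely disjoint from $A$ bridges $F$ to the exterior of $U$.

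The desired contradiction should come from comparing the two local-set decompositions of $\Gamma$ on a component $O$ of $D \setminus C$ that lies in $U \setminus F$. Because $\overline U \cap A = F$, no point of $A$ separates $O$ from other nearby components of $D \setminus C$ inside $U \setminus F$; in particular, $O$ is contained in a (larger) component $V$ of $D \setminus A$. Writing $\Gamma = h_A + \Gamma^A = h_C + \Gamma^C$ on $O$ and using that $h_C \equiv \pm 2(M+1)\lambda$ is constant on $O$ while $h_A$ is a bounded harmonic function on the whole of $V$, the difference of the two identities forces the harmonic extension to $O$ of the boundary values of $\Gamma^A$ on $\partial O$ to be the deterministic harmonic function $h_C - h_A$. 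But $\Gamma^A$ is a zero-boundary GFF on $V$ with $O \subsetneq V$, so this harmonic extension is a genuinely random Gaussian field with positive variance, yielding the contradiction.

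The main obstacle will be the last step: verifying that $V$ strictly contains $O$ (so that the harmonic extension is non-degenerate), and that the information being conditioned on does not already determine it. The inclusion $V \supsetneq O$ should follow from the floating structure, since near $F$ the component $V$ must wrap around $F$ on the side opposite to $O$ inside $U$; measurability of $C$ as a function of $\Gamma$ (via Propositions \ref{cledesc}--\ref{cledesc2}) will be needed to ensure that conditioning on $C$ does not secretly pin down the Gaussian extension, which is the delicate point of the argument.
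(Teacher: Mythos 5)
Your set-up (reducing to a single component $F$ of $A$ separated from $\partial D$, invoking Proposition \ref{maxbtls} to get $A \subseteq C := \CLE_4^{M+1}$) matches the paper, but the mechanism you propose for the contradiction does not work. On a component $O$ of $D \setminus C$ contained in a component $V$ of $D \setminus A$, the two decompositions give $\Gamma^A = (h_C - h_A) + \Gamma^C$ on $O$, so the harmonic extension to $O$ of the boundary values of $\Gamma^A$ on $\partial O$ is the \emph{random} harmonic function $(h_C - h_A)|_O$, not a deterministic one: $C$ and $h_C$ are measurable functions of $\Gamma$, hence of $(A, h_A, \Gamma^A)$, and what you have written down is exactly the (perfectly consistent) statement that $C \cap V$ is a local set of the GFF $\Gamma^A$ in $V$. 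There is no clash with the harmonic extension being a nondegenerate Gaussian field, because you are implicitly conditioning on information ($C$) that determines it. Note also that your contradiction step never actually uses the disconnection hypothesis: the same comparison with $O \subsetneq V$ is available, say, for $A = \CLE_4^M \subset C = \CLE_4^{M+1}$, where certainly no contradiction can arise. So the argument proves too much and the proof does not close.

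The paper's proof runs in the opposite direction. Instead of arguing that $C$ must enter the annular gap around $F$ and extracting a contradiction from the field there, it shows that with positive probability $C$ stays \emph{out} of the inner region: one fixes a deterministic annulus $O \subset D \setminus A$ separating $F$ from $\partial D$ (possible with positive probability), observes via Lemma \ref{abscnty} that conditionally on $(A, h_A)$ the field restricted to $O$ is mutually absolutely continuous with an unconditioned GFF there, and then uses Corollary \ref{change boundary2} together with the fact that the radial SLE$_4(-2)$ exploration closes a loop inside $O$ with positive probability to conclude that with positive probability $C$ does not intersect the region enclosed by the annulus at all. Since $F \subseteq A \subseteq C$ lies in that enclosed region, this is the contradiction. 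This absolute-continuity step, transferring a positive-probability event of the unconditioned exploration into the conditional law given $(A,h_A)$, is the idea missing from your proposal.
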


The structure of the paper is the following: We first recall some basic features about BTLS. Then, we discuss level lines of the GFF with non-constant boundary conditions and their boundary hitting behaviour. 
Thereafter, we recall features of the construction of the coupling of the GFF with $\CLE_4^M$. This sets the stage for the proofs of the propositions involving CLE$_4^M$. We then finally turn to 
Proposition \ref {cledesc2} and to the derivation of the dimensions of the sets $A_{-a,b}$.

\section{Local sets and BTLS} \label{TLS} \label {S3}
In this section, we quickly browse through basic definitions and properties of the GFF and of bounded-type local sets. 
We  only discuss items that are directly used in the current paper. For a more general discussion of local sets, 
thin local sets (not necessarily of bounded type), we refer to \cite {Se,WWln2}. 

\revise{Throughout this paper, the set $D$ denotes an open planar domain with a non-empty and non-polar boundary.} 
In fact, we will always at least assume that the complement of $D$  (a) has at most countably many connected components, (b) has only finitely many components that intersect each given compact subset of $D$,
(c) has no connected component that is a singleton; this last condition (c) excludes for instance sets like $\D \setminus K$, where $K \subset [0,1]$ is the middle Cantor set).
Recall that by a theorem of He and Schramm \cite{HS}, such domains $D$ are known to be conformally equivalent to circle domains (i.e. to $\D \setminus K$
\revise{or more conveniently for us to $\H \setminus K$}, where $K$ is a union of closed disjoint discs). 

	Recall that the (zero boundary) Gaussian Free Field (GFF) in a such a domain $D$  
	can be viewed as a centered Gaussian process $\Gamma = (( \Gamma, f))$ (we also sometimes write  $\Gamma^D$ when the domain needs to be specified) 
	indexed by the set of continuous functions $f$ with compact support in $D$, with covariance given by 
	$$ \E [(\Gamma,f) (\Gamma,g)]  =  \iint_{D\times D} f(x) G_D(x,y) g(y) \d x \d y $$ 
	where $G_D$ is the Green's function (with Dirichlet boundary conditions) in $D$, normalized such that $G_D(x,y)\sim (2\pi)^{-1} \log(1/|x-y|)$ as $x \to y$ for $y \in D$. 
	For this choice of normalization of $G$ (and therefore of the GFF), we set  $$\lambda=\sqrt{\pi/8}.$$ Sometimes, other normalizations are used in the literature: If $G_D (x,y) \sim c \log(1/|x-y|)$ as $x \to y$, then $\lambda$ should 
	be taken to be $(\pi/2)\times \sqrt {c}$. 
	Note that it is in fact possible and useful to define the random variable $(\Gamma, \mu)$ for any fixed Borel measure $\mu$, provided the energy 
	$\iint \mu (dx) \mu (dy) G_D (x,y)$ is finite.

	The covariance kernel of the GFF blows up on the diagonal, which makes it impossible to view $\Gamma$ as a random function.
	However, the GFF has a version that lives in some
	space of generalized functions acting on some deterministic space of smooth functions $f$ (see for example \cite{Dub}). This also justifies our notation $(\Gamma,f)$. 
	Let us stress that it is in general not possible to make sense of $(\Gamma,f)$ for {\em random} functions that are correlated with the GFF, even when $f=1_A$ is the
	indicator function of a random closed set $A$. Local sets form a class of random closed sets $A$, where this is (in a sense) possible. Here, by a random closed set we mean a random variable in the space of closed subsets of $\overline D$, endowed with the Hausdorff metric

\begin{defn}[Local sets]
	Consider a random triple $(\Gamma, A,\Gamma_A)$, where $\Gamma$ is a  GFF in $D$, $A$ is a \revise{random closed subset of $\overline D$} and $\Gamma_A$ is a random distribution that can be viewed as a harmonic function\revise{, $h_A$,} when restricted to compact subsets of 
	 $D \setminus A$.
	We say that $A$ is a local set for $\Gamma$ if conditionally on the couple $(A, \Gamma_A)$, the field $\Gamma - \Gamma_A$ is a  GFF in $D \setminus A$. 
\end {defn}

\revise{We use the different notation $h_A$ for the restriction of $\Gamma_A$ to $D\backslash A$, in order to emphasize that in $D\backslash A$ the generalized function $\Gamma_A$ is in fact a harmonic function and thus can be evaluated at any point $z \in D\backslash A$.}

When $A$ is a local set for $\Gamma$, we will define $\Gamma^A $ to be equal to $\Gamma - \Gamma_A$. Note that the conditional distribution of $\Gamma^A$ given $(A, \Gamma_A)$ is in fact
a function of $A$ alone. 

Notice that being a local set can also be seen as a property of the law of the couple $(\Gamma,A)$, as if one knows this law and the fact that $A$ is a local set of $\Gamma$, then 
one can recover $h_A$ as the limit when $n \to \infty$ of the conditional expectation of $\Gamma$ (outside of $A$) given $A$ and the values of $\Gamma$ in the smallest union of $2^{-n}$ dyadic squares that contains $A$. 
One can then recover $\Gamma^A$ which is equal $\Gamma - h_A$ outside of $A$, and finally 
one reconstructs $\Gamma_A = \Gamma - \Gamma^A$ (including on $A$). 
This argument shows in particular that any local set can be coupled in a unique way with a given GFF: if two random triples $(\Gamma, A,\Gamma_A)$ and $(\Gamma, A,\Gamma_A')$ are both local couplings, then $\Gamma_A$ and $\Gamma_A'$ are almost surely identical.

When $A$ is a local set for $\Gamma$, we will denote by ${\mathcal F}_A$ the $\sigma$-field generated by $(A, \Gamma_A)$. 
We will say that two local sets $A$ and $B$ that are coupled with the same Gaussian Free Field $\Gamma$ are {\em conditionally independent local sets of $\Gamma$} if the sigma-fields 
${\mathcal F}_A$ and ${\mathcal F}_{B}$ are conditionally independent given $\Gamma$. 

Let us list the following two properties of local sets (see for instance \cite {SchSh2} for derivations and further properties): 
\begin{lemma}\label{BPLS}    $\ $
\begin {enumerate}
\item When $A$ and $B$ are conditionally independent local sets for the GFF $\Gamma$, then $A \cup B$ is also a local set for $\Gamma$. 
\item When $A$ and $A'$ are  conditionally independent local sets for $\Gamma$ such that $A \subset A'$ almost surely, then for 
any smooth compactly supported test function $f$, $(\Gamma_{A}, f) = \E [ (\Gamma_{A'},f) | {\mathcal F}_A ]$ almost surely. 
\end {enumerate}
\end{lemma}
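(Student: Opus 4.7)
Both statements are classical and due to Schramm--Sheffield \cite{SchSh2}; let me sketch the key ideas and indicate where the delicate point lies.

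For (1), the candidate for $\Gamma_{A\cup B}$ arises from a two-step conditioning. First, by locality of $A$, decompose $\Gamma = \Gamma_A + \Gamma^A$ where, conditionally on $\mathcal F_A = \sigma(A,\Gamma_A)$, the field $\Gamma^A$ is a zero-boundary GFF in $D\setminus A$. The conditional independence of $A$ and $B$ given $\Gamma$ is what allows one to argue that, conditionally on $\mathcal F_A$, the random closed set $B\cap (D\setminus A)$ plays the role of a local set for the GFF $\Gamma^A$ inside $D\setminus A$. Applying locality at this second stage yields a further decomposition $\Gamma^A = h + \Gamma''$ where $h$ is harmonic in $D\setminus(A\cup B)$ and $\Gamma''$ is an independent zero-boundary GFF there. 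Setting $\Gamma_{A\cup B} := \Gamma - \Gamma''$ then gives the required decomposition, so $A\cup B$ is a local set.

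For (2), I would reuse the construction of (1). Applying it to the nested local sets $A \subset A'$ (conditionally independent given $\Gamma$), one sees that, conditionally on $\mathcal F_A$, the set $A'$ acts as a local set for the GFF $\Gamma^A$ in $D\setminus A$, with associated harmonic function $\Gamma_{A'} - \Gamma_A$. In particular one has the local-set decomposition $\Gamma^A = (\Gamma_{A'} - \Gamma_A) + \Gamma^{A'}$ where, conditionally on $\mathcal F_A$ and on $(A',\Gamma_{A'})$, the residual $\Gamma^{A'}$ is a centered zero-boundary GFF in $D\setminus A'$. For any smooth compactly supported $f$, the tower property then yields $\E[(\Gamma^{A'},f)\mid \mathcal F_A] = 0$; since also $\E[(\Gamma^A,f)\mid \mathcal F_A] = 0$ (because $\Gamma^A$ is, conditionally on $\mathcal F_A$, a centered GFF in $D\setminus A$), taking conditional expectation on both sides of the decomposition gives $\E[(\Gamma_{A'} - \Gamma_A, f) \mid \mathcal F_A] = 0$, which is the desired identity since $\Gamma_A$ is $\mathcal F_A$-measurable.

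The main obstacle in this plan is the step in (1) where one must show that $B\cap(D\setminus A)$ qualifies as a local set for the conditional GFF $\Gamma^A$, given only that $A$ and $B$ are conditionally independent local sets of $\Gamma$. This is most conveniently done by testing against smooth functions supported away from $A\cup B$ and comparing the conditional characteristic functions of $(\Gamma,f)$ given the relevant $\sigma$-fields, using that the Gaussian structure required for a local-set decomposition is compatible with the conditional-independence assumption.
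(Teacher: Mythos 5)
The paper does not actually prove this lemma: it is quoted from \cite{SchSh2}, so the relevant comparison is with the argument there. Your part (2) is essentially fine: once one knows that, conditionally on $\mathcal F_A$, the pair $(A', \Gamma_{A'}-\Gamma_A)$ is a local coupling for the field $\Gamma^A$ in $D\setminus A$, the identity follows from two applications of the tower property exactly as you write, and the integrability needed to take these conditional expectations is guaranteed because $(\Gamma^{A'},f)$ is conditionally centered Gaussian with variance bounded by $\iint f(x)G_D(x,y)f(y)\,dx\,dy$. The problem is part (1), and you have put your finger on it yourself: the assertion that, conditionally on $\mathcal F_A$, the set $B\setminus A$ ``plays the role of a local set for $\Gamma^A$'' is not a reduction of the problem --- it \emph{is} the problem. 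All of the content of the lemma sits inside that one claim, and the closing paragraph you offer to justify it (testing against functions supported away from $A\cup B$ and ``comparing conditional characteristic functions'') names a technique without producing an argument. In particular, it is not clear which $\sigma$-algebra attached to $B\setminus A$ you would condition $\Gamma^A$ on (it cannot simply be $\mathcal F_B$, which carries information about $\Gamma$ near and on $A$), nor where exactly the hypothesis that $\mathcal F_A$ and $\mathcal F_B$ are conditionally independent given $\Gamma$ would enter such a computation.

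The proof in \cite{SchSh2} (Lemmas 3.9 and 3.10 there) avoids this two-step conditioning altogether. One first establishes an equivalent characterization of locality in terms of deterministic open sets: $A$ is local if and only if for every open $U\subset D$, conditionally on the projection of $\Gamma$ onto the orthogonal complement of $H^1_0(U)$ (the part of $\Gamma$ that is harmonic in $U$), the indicator of the event $\{A\cap U=\emptyset\}$ together with the restriction of $A$ to that event is independent of the projection of $\Gamma$ onto $H^1_0(U)$. With this in hand, locality of $A\cup B$ reduces to checking the same property for $\{(A\cup B)\cap U=\emptyset\}=\{A\cap U=\emptyset\}\cap\{B\cap U=\emptyset\}$, and it is precisely here that the conditional independence of $A$ and $B$ given $\Gamma$ is used, in a transparent way. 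If you want to keep your route, the honest statement of what remains to be proven is exactly this intersection-of-events computation; I would either carry it out or restructure the argument around the open-set characterization from the start.
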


We now define local sets of bounded type: 
\begin{defn}[BTLS]\label{BTLSCND}
	Consider  a random relatively closed subset $A$ of $D$ (i.e. so that $D \setminus A$ is open) and $\Gamma$ a GFF defined on the same probability space.
Let $K\geq 0$, we say that $A$ is a $K$-BTLS if the following four conditions are satisfied: 
	\begin {enumerate}
	 \item $A$ is a local set of $\Gamma$.
	  \item Almost surely, $|h_A| \le K$ in $D \setminus A$.  
	 \item Almost surely, each connected component of $A$ that does not intersect $\partial D$ has a neighbourhood that does intersect no other connected component of $A$. 
	 \item $A$ is a thin local set in the sense defined in \cite{Se, WWln2}: for any smooth test function $f \in \mathcal{C}^\infty_0$, the random variable $(\Gamma_A,f)$ is almost surely equal to  $(\int_{D \setminus A }  h_A(x) f(x) dx)$.
	\end {enumerate}
If $A$ is a $K$-BTLS for some $K$, we say that it is a BTLS.
\end {defn}

Note also that it is in fact possible to remove all isolated points from a BTLS (of which there are at most countably many because of the third property)
without changing the property of being BTLS. Indeed, the bounded harmonic function $h_A$ can be extended to those points and a GFF does not see polar sets. We therefore  replace the third property by:
\begin {description}
\item [{\sl (3')}] Almost surely, $A$ contains no isolated points and each connected component of $A$ that does not intersect $\partial D$ has a neighbourhood that intersects no other connected component of $A$.
\end{description}
This reformulation is handy to keep our statements simple. \revise{In particular, this condition implies that for any $x\in A$ and for any neighbourhood $J$ of $x$, $J\cap A$ is not polar. To see this, it is enough to notice that $D\backslash A$ is conformally equivalent to a circle domain described in the beginning of this section.}

It is not hard to see that because the harmonic function is bounded, the condition (4) in the definition of BTLS
could be replaced (without changing the definition) by the fact that if we define $[A]_n$ to be the union $A_n$
of the $2^{n}$-dyadic squares that intersect $A$, then for any compactly supported smooth test function $f$ in $D$, 
the sequence of random variables $(\Gamma, f\1_{[A]_n})$ converges in probability to $0$. 
From a Borel-Cantelli argument, one can moreover see that this equivalent condition is implied by the stronger condition (see \cite{Se, WWln2}): 
\begin {description}
 \item [$(*)$] The expected volume of the $\eps$-neighbourhood of $A$ decays like $o(1 / \log (1 / \eps ))$ as $\eps \to 0$. 
\end {description}
In other words, if a set satisfies the first three conditions in our BTLS definition and $(*)$, then it is a BTLS.

Note that if $A$ and $B$ are two conditionally independent BTLS of $\Gamma$, then we know by Lemma \ref {BPLS} that $A \cup B$ is a local set, but not yet that it is a BTLS. 
In order to prove this, we will need to show it is thin and give an upper bound for the harmonic function $h_{A \cup B}$. 

It is not hard to derive the following related fact (that will be used later in the paper, at the end of the proof of the fact that the union of any two BTLS is a again a BTLS):
\revise{\begin{lemma}\label{local subset}
		Let $A$ and $A'$ are two  conditionally independent thin local sets of the same GFF $\Gamma$ such 
		that $A$ satisfies the condition (3') of Definition \ref{BTLSCND}, that $A'$ is a $K$-BTLS and that $A\subseteq A'$ almost surely. Then $A$ is a $K$-BTLS. 
\end{lemma}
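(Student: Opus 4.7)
The plan is to verify the four defining conditions of a $K$-BTLS for $A$. Three of them come for free from the hypotheses: $A$ is a local set of $\Gamma$, it satisfies (3'), and it is thin. So everything reduces to showing that $|h_A|\le K$ in $D\setminus A$ almost surely.

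The key input will be part (2) of Lemma~\ref{BPLS}, applied to the conditionally independent nested pair $A\subseteq A'$: for every smooth compactly supported $f$, one has $(\Gamma_A,f)=\E[(\Gamma_{A'},f)\mid\mathcal{F}_A]$ almost surely. Using thinness of both $A$ and $A'$ to replace each side by a Lebesgue integral of the corresponding harmonic function, this becomes
\[
\int_{D\setminus A} h_A(x)f(x)\,dx \;=\; \E\!\left[\int_{D\setminus A'} h_{A'}(x)f(x)\,dx\,\Big|\,\mathcal{F}_A\right] \quad\text{a.s.}
\]

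I would then fix a countable dense subset $\{z_i\}\subseteq D$ and, for each $i$ and each $n\in\N$, a deterministic nonnegative bump function $f_{i,n}\in\mathcal{C}^\infty_0(D)$ of unit total mass supported in $B(z_i,1/n)$. Specializing the above identity to $f=f_{i,n}$ and using $|h_{A'}|\le K$ together with $\int f_{i,n}=1$, the random variable inside the conditional expectation on the right is bounded by $K$ in absolute value, so the right-hand side is itself bounded by $K$ almost surely. Intersecting these full-probability events over the countable family $(i,n)$ produces a single event of probability one on which $\big|\int_{D\setminus A} h_A(x)f_{i,n}(x)\,dx\big|\le K$ for every $i$ and $n$ simultaneously.

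Finally, to upgrade these integral bounds to a pointwise bound: on the above event, for each $i$ such that $z_i\in D\setminus A$, the support of $f_{i,n}$ lies in $D\setminus A$ for all sufficiently large $n$, so continuity of the harmonic function $h_A$ at $z_i$ yields $\int h_A f_{i,n}\,dx\to h_A(z_i)$ and hence $|h_A(z_i)|\le K$. Since $D\setminus A$ is open, $\{z_i\}\cap(D\setminus A)$ is dense in every connected component of $D\setminus A$, so continuity of $h_A$ extends the bound to all of $D\setminus A$. The only mild subtlety is that the bump functions must be chosen in advance, independently of $A$, so that countably many almost sure equalities furnished by Lemma~\ref{BPLS}(2) can be combined into a single full-probability statement; but this is trivially arranged by picking the $f_{i,n}$ deterministically at the outset.
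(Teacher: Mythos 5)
Your proof is correct and follows essentially the same route as the paper: both hinge on Lemma \ref{BPLS}(2) applied to a deterministic countable family of unit-mass bump functions, bound the conditional expectation by $K$ using $|h_{A'}|\le K$, and upgrade to a pointwise bound on $h_A$ on a full-probability event. The only cosmetic difference is that the paper uses radially symmetric mollifiers so that $(\Gamma_A,\rho_\eps^z)=h_A(z)$ directly by the mean value property, whereas you let the supports shrink and invoke continuity of $h_A$; both are fine.
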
}
\revise{	
\begin {proof}
We need to check that $|h_A| \leq K$. Let us choose a smooth non-negative test function $\rho$ that is radially symmetric around the origin, of unit 
mass with support in the unit ball, and denote by $\rho_\eps^z$ the naturally shifted and scaled version of $\rho$, so that $\rho_\eps^z$ 
is radially symmetric around $z$, of unit mass and with support in the open ball $B(z,\eps)$. 
The final statement of Lemma \ref {BPLS} shows that when $\eps < d( z, \partial D)$, 
$\E [ (\Gamma_{A'},  \rho_\eps^z) | {\mathcal F}_{A} ] =  (\Gamma_{A},\rho_\eps^z)$ almost surely. As $A'$ is a $K$-BTLS, we know that almost surely, 
$ | ( \Gamma_{A'}, \rho_\eps^z) | \le K$, so that by Jensen's inequality, $| ( \Gamma_A, \rho_\eps^z ) | \le K$ almost surely. 
But as $\Gamma_A$ is equal to the harmonic function $h_A$ in the complement of $A$, we have $h_A(z) = \Gamma_A(\rho_\eps^z)$ as long as $d(z,A) > \eps$. 
Thus we conclude that with full probability on the event that $d(z, A) > \eps$, $|h_A (z) | \le K$. 
Since this holds almost surely for all $z$ with rational coordinate and every rational $\eps < d(z, \partial D)$ simultaneously, we conclude that almost surely $|h_A| \leq K$ in $D\setminus A$. 
\end{proof}}
 
The following two tailor-made lemmas will be used in the proof of the fact that any BTLS is contained in some CLE$_4^M$:
\revise{
\begin{lemma} \label{mes}
	Let $A$ and $B$ are two conditionally independent BTLS of the GFF $\Gamma$ such that $A$ almost surely satisfies condition $(*)$,
	and such that there exists  $k\in \R$ such that a.s. for all $z\notin A \cup B$, $|h_{A}(z)+k|\geq |h_{A \cup B}(z)+k| $.  Then $B\subseteq  A$ almost surely.
\end{lemma}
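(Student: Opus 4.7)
My plan is to prove, in sequence: (i) almost surely, $h_A = h_{A\cup B}$ on $D\setminus(A\cup B)$; (ii) almost surely, $B\setminus A$ is polar for Brownian motion killed on exiting $D\setminus A$; and (iii) $B\subseteq A$ almost surely.

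For (i), I first note that by Lemma \ref{BPLS}(1), $A\cup B$ is a local set, and the hypothesis forces $|h_{A\cup B}|\leq K + 2|k|$ on $D\setminus(A\cup B)$ (where $K$ bounds $|h_A|$). The identity $[A\cup B]_n = [A]_n \cup [B]_n$ for the dyadic neighbourhoods, together with the thinness of $A$ and $B$ and the condition $(*)$ for $A$ (needed to control the overlap term in $L^2$), implies that $A\cup B$ is also thin, so $(\Gamma_{A\cup B},f) = \int h_{A\cup B} f$ for smooth $f$. Applying Lemma \ref{BPLS}(2) to the pair $A\subseteq A\cup B$ with the smoothed delta $\rho_\epsilon^z$ and letting $\epsilon\downarrow 0$ then yields the pointwise relation
\[
h_A(z) = \E[h_{A\cup B}(z)\mid \mathcal F_A] \qquad \text{for Lebesgue-a.e.\ } z\in D.
\]
Conditional Jensen gives $\E\int_D (h_A+k)^2 \leq \E\int_D (h_{A\cup B}+k)^2$, while the hypothesis directly yields the reverse inequality after integrating over $D\setminus(A\cup B)$ (a set of full Lebesgue measure). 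Equality forces Jensen to be sharp, so $h_{A\cup B}(z)+k$ is $\mathcal F_A$-measurable and equal to $h_A(z)+k$ almost everywhere, and harmonicity extends this to all of $D\setminus(A\cup B)$.

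For (ii), the thinness of both $A$ and $A\cup B$ combined with (i) gives $\Gamma_A = \Gamma_{A\cup B}$ as distributions, so $\Gamma^A := \Gamma - \Gamma_A$ coincides with $\Gamma^{A\cup B}:=\Gamma-\Gamma_{A\cup B}$. Computing $\E[(\Gamma^A,f)^2]$ via conditioning on $\mathcal F_A$ and on $\mathcal F_{A\cup B}$ respectively, and using the covariance structure of a GFF on a given domain, yields
\[
\E\!\iint f(x)\,G_{D\setminus A}(x,y)\,f(y)\,dx\,dy \;=\; \E\!\iint f(x)\,G_{D\setminus(A\cup B)}(x,y)\,f(y)\,dx\,dy.
\]
Since removing more from the domain only decreases the Green's function, $G_{D\setminus A}\geq G_{D\setminus(A\cup B)}$ pointwise; equality in expectation therefore forces equality of the two Green's functions on $(D\setminus(A\cup B))^2$ almost surely (running over a countable dense family of test functions and polarizing), which is exactly the statement that $B\setminus A$ is polar for Brownian motion killed upon leaving $D\setminus A$.

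For (iii), I argue by contradiction: if $B\not\subseteq A$ with positive probability, on this event pick $z\in B\setminus A$ and a ball $J$ around $z$ with $J\cap A = \emptyset$, so $J\cap B \subseteq B\setminus A$. The remark following condition $(3')$ in the definition of BTLS ensures that $J\cap B$ is not polar, contradicting (ii). The main difficulty I anticipate is in step (i): promoting the distributional identity furnished by Lemma \ref{BPLS}(2) to a genuine pointwise almost-sure identity between $h_A$ and a conditional expectation of $h_{A\cup B}$; this requires both the thinness of $A\cup B$ and a careful dominated convergence argument as $\epsilon\downarrow 0$, relying on the boundedness of both harmonic functions.
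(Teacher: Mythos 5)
Your argument is correct in substance and reaches the same endpoint as the paper (equality of $G_{D\setminus A}$ and $G_{D\setminus(A\cup B)}$ on $(D\setminus(A\cup B))^2$, hence polarity of $B\setminus A$ in $D\setminus A$, hence $B\subseteq A$ via condition (3')), and you establish thinness of $A\cup B$ by the same decomposition $[A\cup B]_n=[A]_n\cup[B]_n$ using $(*)$ for $A$. The middle of the argument, however, takes a genuinely different route. The paper never isolates the identity $h_A=h_{A\cup B}$: it computes $\E[(\Gamma+k,\1_O)^2\I{O\subset A_+}]$ once by conditioning on $\mathcal F_A$ and once on $\mathcal F_{A\cup B}$, obtaining in each case a sum of an integrated-harmonic-part squared and an integrated Green's function; restricting to the events $\{O\subset A_\pm\}$ is precisely what makes the two harmonic parts comparable there, so that together with $G_{D\setminus(A\cup B)}\leq G_{D\setminus A}$ both summands are ordered and must separately coincide. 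You instead first prove $h_A=h_{A\cup B}$ pointwise via the tower property $h_A(z)=\E[h_{A\cup B}(z)\mid\mathcal F_A]$ combined with the equality case of conditional Jensen, and only then extract the Green's function identity from the second moment of $\Gamma^A=\Gamma^{A\cup B}$; this disentangles the two conclusions and avoids the $A_\pm$ localization, at the price of needing the martingale property along the nested pair $A\subseteq A\cup B$. Two caveats on that. First, Lemma \ref{BPLS}(2) as stated requires $A$ and $A'$ themselves to be conditionally independent, which $A$ and $A\cup B$ are not; what you actually need is the Schramm--Sheffield union version, $\E[(\Gamma_{A\cup B},f)\mid\mathcal F_A]=(\Gamma_A,f)$ for conditionally independent $A$ and $B$, which is true and standard but should be invoked in that form. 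Second, your integrations over $D$ implicitly assume both that $A\cup B$ has zero Lebesgue measure and that $\int_D(h+k)^2\,dz<\infty$; the former is harmless (the paper's proof makes the same implicit use when writing $(h_A+k,\1_O)$), but for unbounded $D$ you should integrate against a fixed positive integrable weight or over an exhaustion by bounded subdomains. Neither caveat affects the validity of the overall strategy.
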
}
\begin{proof}[Proof of Lemma \ref{mes}:] 
First, let us briefly explain why the conditions on $A$ and $B$ imply that $A \cup B$ is a thin local set. From Lemma \ref{BPLS}, we know that $A \cup B$ is local. To show that $A\cup B$ is thin, we write $(\Gamma,f\1_{[A\cup B]_n}) =  (\Gamma,f\1_{[B]_n}) + (\Gamma,f\1_{[A]_n\backslash [B]_n})$ and note that the first term converges to 0 thanks to the definition and the second one thanks to condition ($*$) (see \cite {Se} for a more detailed discussion and related facts).

 We denote by $A_+$ (resp. $A_-$) the set of points in $D \setminus A$ where $h_A+k$ is non-negative (resp. non-positive). Then, for any open set $O$, 
\begin {eqnarray*}
 \E\left[  (\Gamma+k,\1_O)^2 \I{O \subset A_+} \right] &=& \E \left[  \E \left[\left ( (h_A+k ,\1_O) + (\Gamma^A , \1_O)\right )^2 | {\mathcal F}_A \right]  \I{O \subset A_+}\right]  \\
 &=&  
\E\left[ \left( (h_A+k, \1_O)^2 + \int_{O \times O} G_{D \setminus A} (x,y) \d x \d y \right) \I{O \subset A_+}\right].
\end {eqnarray*}
Similarly, by conditioning on ${\mathcal F}_{A \cup B}$ and using that $A \cup B$ is thin, one gets that this same quantity is equal also to 
$$ \E\left[  \left( (h_{A \cup B}+k , \1_O)^2 + \int_{(O \setminus B)  \times (O \setminus B)} G_{D \setminus (A \cup B)} (x,y) \d x \d y\right)\I{O \subset A_+} \right] .$$
But by definition, when $O \subset A_+$, $(h_A+k, \1_{O}) \ge |(h_{A \cup B}+k , \1_O)|$ (because $h_A (x) +k  \ge |h_{A \cup B} (x) +k|$ for all $x \in A_+$). Hence, using the fact that $G_{D \setminus (A \cup B)} \le G_{D \setminus A}$, we conclude that 
for every open set, almost surely on the event $O \subset A_+$, 
$G_{D \setminus (A \cup B)} = G_{D \setminus A}$ on $O \times O$. The same statement holds for $A_-$ instead of $A_+$. Therefore $B \setminus A$ is polar in $D \setminus A$ and condition (3') allows us to conclude.
\end{proof}

Let us now suppose that $A$ is a local set of the GFF $\Gamma$ in a bounded simply-connected domain $D$ such that $D\backslash A$ is connected and $\partial D \cup A$ has only finitely many connected components.
The following lemma says that if $|h_A| \le C$ in the neighbourhood of all but finitely many prime ends of $D\backslash A$, then it is bounded by $C$ in all of $D\backslash A$. 
To state this rigorously, it is convenient to note that by Koebe's circle domain theorem, one can use a conformal map $\phi$ to map 
$D\backslash A$ onto a circle domain $\tilde O$ (i.e., the unit disc with a finite number of disjoint closed discs removed).
In this way, each prime end of $D\backslash A$ is in one-to-one correspondence with a 
boundary point of  $\tilde O$. Define also  the harmonic function 
$\tilde h_A := h_A \circ \phi^{-1}$ in $\tilde O$. 

\begin{lemma}\label{finite points}
Let $A$ be a local set of the GFF $\Gamma$ as just described, and let $\tilde O$, $\tilde h_A$ be as above. 
Assume furthermore that there exist finitely many points $y_1, \ldots, y_n$ on $\partial \tilde O$ and a non-negative constant $C$ such that for all $y \in \partial \tilde O \setminus \{ y_1, \ldots, y_n \}$,
one can find a positive $\eps(y)$ such that $|\tilde h_A | \le C$ in the $\eps(y)$-neighbourhood of $y$ in $\tilde O$.
Then  $|h_A|$ is in fact bounded by $C$ in all of $D\backslash A$.
\end{lemma}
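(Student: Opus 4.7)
The strategy is to apply the maximum principle (via Brownian motion) on subdomains of $\tilde O$ obtained by excising small disks around the exceptional points $y_1, \ldots, y_n$, and then to let the disks shrink to zero.

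Concretely, fix any $z_0 \in \tilde O$ and consider $U_\eps := \tilde O \setminus \bigcup_i \overline{B(y_i, \eps)}$ for small $\eps$, whose boundary splits into the outer part $\partial_o^\eps := \partial U_\eps \cap \partial \tilde O$ (on which the hypothesis gives $|\tilde h_A| \le C$ in an open neighbourhood inside $\tilde O$) and the inner arcs $\gamma_{i,\eps} := \partial B(y_i, \eps) \cap \tilde O$. A routine compactness argument using that $\tilde h_A$ is continuous on $\tilde O$ and bounded by $C$ near every point of $\partial \tilde O \setminus \{y_1, \ldots, y_n\}$ shows that $\tilde h_A$ is bounded on $\overline{U_\eps} \cap \tilde O$; the Poisson/optional-stopping representation then yields
\[
\tilde h_A(z_0) \;=\; \E\bigl[\tilde h_A(B_{\tau_\eps})\bigr],
\]
where $(B_t)$ is a planar Brownian motion started at $z_0$ and $\tau_\eps$ its exit time from $U_\eps$. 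Decomposing according to where $B_{\tau_\eps}$ lands gives
\[
|\tilde h_A(z_0)| \;\le\; C \;+\; \sum_{i=1}^n \Bigl(\sup_{\gamma_{i,\eps}} |\tilde h_A|\Bigr)\,\P_{z_0}\bigl[B_{\tau_\eps} \in \gamma_{i,\eps}\bigr],
\]
and each arc-hitting probability is $O(1/\log(1/\eps))$ because singletons are polar for $2$-dimensional Brownian motion in a circle domain.

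The main obstacle is controlling $\sup_{\gamma_{i,\eps}} |\tilde h_A|$ well enough that the arc contributions vanish as $\eps \to 0$; one would like a bound of the form $o(\log(1/\eps))$. For an arbitrary harmonic function this conclusion fails --- for instance $\mathrm{Re}(1/(1-z))$ on the unit disk satisfies the hypothesis around every boundary point except $z=1$ yet blows up like $1/\eps$ on the circle $|z-1|=\eps$ --- so the local set structure of $A$ must be used at this point. My plan is to exploit the $L^2$ moment bound for $h_A$ coming from the conditional variance decomposition of $\Gamma$, namely
\[
\E\bigl[(h_A, f)^2\bigr] \;\le\; \iint f(x) G_D(x,y) f(y)\, dx\, dy,
\]
applied to test functions $f$ localized in annular neighbourhoods of $y_i$, and to upgrade these averaged bounds to pointwise bounds via the mean value / Harnack inequality for the harmonic function $\tilde h_A$. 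Once this growth control is established, the arc contributions vanish in the limit, so $|\tilde h_A(z_0)| \le C$, and the arbitrariness of $z_0$ concludes the proof.
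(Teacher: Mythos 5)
Your skeleton is the same as the paper's: excise $\eps$-balls around the exceptional points, write $\tilde h_A(z_0)$ as an integral against harmonic measure on the boundary of the excised domain, bound the good part by $C$, and show the arcs $\gamma_{i,\eps}$ contribute $o(1)$. You also correctly identify the crux (the arc contribution cannot be controlled for a general harmonic function, so the coupling with the GFF must enter) and your counterexample $\mathrm{Re}(1/(1-z))$ is apt. However, there is a genuine quantitative gap in how you propose to close the argument. Your hitting estimate $\P_{z_0}[B_{\tau_\eps}\in\gamma_{i,\eps}]=O(1/\log(1/\eps))$ is the bound for a polar \emph{interior} point; since $y_i$ lies on a circle component of $\partial\tilde O$ and $z_0$ is at definite distance, the harmonic measure density on $\gamma_{i,\eps}$ is in fact bounded uniformly in $\eps$, so the arc has harmonic measure $O(\eps)$ --- and this stronger estimate is needed. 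Indeed, the growth control you hope for, $\sup_{\gamma_{i,\eps}}|\tilde h_A|=o(\log(1/\eps))$, is more than the GFF delivers uniformly: the bound that is actually available simultaneously over all points and scales is $O(\log(1/\eps))$ (this is the Hu--Miller--Peres thick-points estimate for circle averages; your $L^2$ moment bound gives variance of order $\log(1/\eps)$ at a fixed point, and making it almost surely uniform over the arc costs the square root, landing exactly at $O(\log(1/\eps))$). With your hitting estimate the product $O(\log(1/\eps))\cdot O(1/\log(1/\eps))$ does not vanish; with the correct $O(\eps)$ harmonic-measure bound one gets $O(\eps\log(1/\eps))\to 0$, which is how the paper concludes.

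Two further points in your plan are left unaddressed and are where the remaining work lies. First, the ``upgrade averaged bounds to pointwise bounds'' step: Harnack does not apply since $\tilde h_A$ has no sign, and the mean-value identity $h_A(x)=(h_A,\rho_r^x)$ only holds for $r<d(x,\partial(D\setminus A))$, so the resulting bound is in terms of the distance to $\partial(D\setminus A)$ in the \emph{original} domain. Second, you need the bound in terms of $d(y,\partial\tilde O)$ in the circle domain; points of $\gamma_{i,\eps}$ can map under $\phi^{-1}$ to points at quite different distances from $A$, and the paper bridges this with Beurling's estimate. Neither step is fatal, but both must be carried out, and the arc-hitting estimate must be upgraded as above for the proof to close.
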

\begin{proof} For some (random) small enough $r_0$, all connected components of $\partial \tilde O$ are at distance at least $r_0$ from each other.
Let us now consider any $\eps$ smaller than $r_0/2$. By compactness of $\partial \tilde O $, one can cover $ \partial \tilde O \setminus \cup_{j \le n} B(y_j, \eps)$
by a union $U$ of finitely many open balls of radius not larger than $\eps$ that are centered on points of $\partial \tilde O$ in such a way that $|\tilde h_A| \le C $ in 
all of  $U \cap \tilde O$.

Let us now choose some $\tilde z \in \tilde O$ with $d(\tilde z, \partial \tilde O) > \eps$ and prove that $|\tilde h_A (\tilde z) | \le C$. 
Define $V$ to be the connected component of $ \tilde O  \setminus ( U  \cup \cup_{j \le n} B(y_j, \eps))$ that contains $\tilde z$. 
The definition of $U$ shows that except on the (possibly empty) 
part of $\partial V$ that belongs to the boundary of the $\eps$-balls around $y_1, \ldots, y_n$, the function $| \tilde h_A |$ is bounded by $C$. 
Now, $\tilde h_A ( \tilde z)$ is the integral of the harmonic function $\tilde h_A$ with respect to the harmonic measure at $\tilde z$ on $\partial V$. Thus, in order to show that $|\tilde h_A (\tilde z)| \le C$, it suffices to prove that the contribution $J$ of the integral on $\partial V \cap \partial B (y_i, \eps )$ goes to $0$ as $\eps $ to $0$ for all $i=1 , \ldots , n$ .

To justify this, we can first note that the density of the harmonic measure (with respect to the Lebesgue measure) on all these arcs is bounded by a positive constant independently of $\eps$. 
On the other hand, it follows from the proof of Lemma 3.1 in \cite{HMP} that 
there exists a random constant $C'$ such that almost surely, the absolute value of the circle average of $\Gamma$ on the circle of radius $r$ around $x$ 
	is bounded by $ C'   \log(2 / r) $
	for all $x \in D$ and $r \in (0, 1]$ simultaneously. 	
As for all $x\in D\backslash A$, $h_A (x)$ is equal to the average of $\Gamma-\Gamma^A$ on any circle of radius smaller than $d(x,  \partial (D\backslash A) )$ around $x$, we deduce that  
	 for some random constant $C''$ and for all $x \in (D\backslash A)$
	$$ |h_A(x)| \le C'' \log (  2  / d(x, \partial (D\backslash A))  ). $$
But now Beurling's estimate allows to compare $d(x, \partial (D\backslash A))$ with $d(\phi (x), \partial \tilde O)$. 
We obtain that for some random positive $C'''$ and for all $y \in \tilde O$ simultaneously
$$  |\tilde h_A (y)| \le C''' \log ( 2 / | d(y, \partial \tilde O) | ).$$ 
This in turn implies that $J$ is almost surely $O(\eps|\log \eps|)$ as $\eps \to 0$, which completes the proof. 
\end{proof}

\section{Absolute continuity for the GFF, generalized level lines.}

\subsection {GFF absolute continuity}\label{ssabscon}

Let $D = \H \setminus K$, where $K$ is a countable union of closed discs such that in any compact set of $\H$ there are only finitely many of them.  

Let us recall first that, similarly to Brownian motion, the GFF can be viewed as the Gaussian measure associated to the Dirichlet space $\mathcal{H}^1_0$, which is the closure of the set of smooth functions of compact support in $D$ with respect to the Dirichlet norm given by 
$$(f,f)_\nabla = \int_{D} |\nabla f(x)|^2  \d x . $$
The Dirichlet space is also the Cameron-Martin space for the GFF (see e.g. \cite{Dub, ShQZ} for this classical fact): 
	\begin{thm}[Cameron-Martin for the GFF]
		Let $F$ be a function belonging to $\mathcal H^1_0(D)$ and $\Gamma$ a GFF in $D$. Denote the law of $\Gamma$ by $\P$ and the law of $\Gamma+F$ by $\tilde \P$. 
		Then $\P$ and $\tilde \P$ are mutually absolutely continuous and the Radon-Nikodym derivative $d\tilde \P / d \P$ at $\gamma$ is a multiple of $\exp((F, \gamma)_\nabla)$.
	\end{thm}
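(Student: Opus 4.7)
The plan is to reduce the statement to the classical finite-dimensional Cameron--Martin formula via an orthonormal expansion in the Dirichlet space, and then pass to the limit. The essential input is that the GFF $\Gamma$ can be viewed as an isometry from $\mathcal H_0^1(D)$ into $L^2(\P)$: for each $\phi \in \mathcal H_0^1$ the random variable $(\Gamma,\phi)_\nabla$ is a centred Gaussian with $\E[(\Gamma,\phi)_\nabla(\Gamma,\psi)_\nabla]=(\phi,\psi)_\nabla$ (up to the normalisation that matches the $G_D\sim(2\pi)^{-1}\log(1/|x-y|)$ convention and that will be absorbed into the multiplicative constant in the statement). This identification is what makes $(F,\Gamma)_\nabla$ meaningful, even though $\Gamma$ is not a function.

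Concretely, I would fix an orthonormal basis $(e_n)_{n\ge 1}$ of $\mathcal H_0^1(D)$ and write $\Gamma=\sum_n X_n e_n$ with $X_n$ i.i.d.\ standard Gaussians, the series converging almost surely in the space of distributions where $\Gamma$ lives. Expanding $F=\sum_n f_n e_n$ with $\sum_n f_n^2 = (F,F)_\nabla<\infty$, the translated field $\Gamma+F$ corresponds to $\sum_n(X_n+f_n)e_n$. For each finite $N$, the projections $\Gamma_N:=\sum_{n\le N}X_n e_n$ and $\Gamma_N+F_N$ are genuine finite-dimensional Gaussian vectors on $\R^N$, so the classical Cameron--Martin (or just a direct computation with the density of a standard Gaussian on $\R^N$) gives
\[
\frac{d\tilde\P_N}{d\P_N}(\gamma)=\exp\!\Bigl(\sum_{n\le N}f_n X_n(\gamma)-\tfrac{1}{2}\sum_{n\le N}f_n^2\Bigr).
\]

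Finally, I would pass to the limit $N\to\infty$. The sum $\sum_{n\le N}f_n X_n$ is an $L^2$-bounded martingale (its variance equals $\sum_{n\le N}f_n^2\le(F,F)_\nabla$), hence converges almost surely and in $L^2$ to a Gaussian random variable that one identifies with $(F,\Gamma)_\nabla$. The exponential martingale
\[
M_N:=\exp\!\Bigl(\sum_{n\le N}f_n X_n-\tfrac{1}{2}\sum_{n\le N}f_n^2\Bigr)
\]
is a non-negative martingale of mean one, and since $\sum f_n^2<\infty$ it is uniformly integrable (its $L^2$ norm stays bounded by $\exp(\tfrac12 (F,F)_\nabla)$), so it converges in $L^1$ to $\exp((F,\Gamma)_\nabla-\tfrac12(F,F)_\nabla)$. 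Testing against bounded cylindrical functionals of $\Gamma$ and using that such functionals generate the $\sigma$-algebra of the GFF then yields $d\tilde\P/d\P=\exp((F,\Gamma)_\nabla-\tfrac12(F,F)_\nabla)$, which is the announced formula up to the multiplicative constant $\exp(-\tfrac12(F,F)_\nabla)$.

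The main obstacle is the rigorous identification of $\sum_n f_n X_n$ with a meaningful pairing $(F,\Gamma)_\nabla$ intrinsic to $\Gamma$ (i.e.\ independent of the basis $(e_n)$) and, on the probabilistic side, the upgrade from finite-dimensional absolute continuity on the projected coordinates to absolute continuity of the laws of the full generalised-function-valued random variables; both are handled by the $L^2$-isometry property and the uniform integrability of $M_N$ above.
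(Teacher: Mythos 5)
The paper never proves this theorem: it is invoked as a classical fact with a pointer to \cite{Dub,ShQZ}, so there is no internal proof to compare against. Your argument is the standard one for Gaussian measures (finite-dimensional Cameron--Martin along an orthonormal basis of the Cameron--Martin space, followed by an exponential-martingale limit), and it is essentially correct: $\E_\P[M_N^2]=\exp(\sum_{n\le N}f_n^2)\le\exp((F,F)_\nabla)$, so $M_N$ is a uniformly integrable martingale converging a.s.\ and in $L^1$ to $M_\infty=\exp((F,\Gamma)_\nabla-\tfrac12(F,F)_\nabla)$, and since $d\tilde\P/d\P=M_N$ on $\mathcal F_N:=\sigma(X_1,\dots,X_N)$, a $\pi$-system/monotone-class argument identifies $\tilde\P$ with $M_\infty\,d\P$ on $\sigma(\cup_N\mathcal F_N)$. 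Three small points are worth making explicit. First, \emph{mutual} absolute continuity needs the observation that $M_\infty>0$ almost surely (the exponent is an honest Gaussian variable), so that $\P\ll\tilde\P$ with density $1/M_\infty$. Second, your final ``testing against cylindrical functionals'' step implicitly uses that $\sigma(X_n,\,n\ge1)$ agrees modulo null sets with the $\sigma$-algebra generated by the pairings $(\Gamma,\phi)$ for test functions $\phi$; this is easy (choose the $e_n$ smooth and compactly supported, note $X_n$ is then a measurable function of $\Gamma$, and conversely $(\Gamma,\phi)=\lim_N\sum_{n\le N}X_n(e_n,\phi)_{L^2}$ in $L^2(\P)$), and it also gives the basis-independence of $(F,\Gamma)_\nabla$ via the isometry, but it should be said. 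Third, your remark that a mismatch of normalisation ``will be absorbed into the multiplicative constant'' is not quite right as stated: if the isometry carried a constant $c\neq1$, the exponent would be rescaled to $c^{-1}(F,\gamma)_\nabla$ rather than the prefactor changed; fortunately, with the paper's convention $G_D(x,y)\sim(2\pi)^{-1}\log(1/|x-y|)$ the Cameron--Martin space is exactly $\mathcal H^1_0(D)$ with the plain Dirichlet inner product, so $c=1$ and the formula is as announced. With these clarifications, your proof is a complete and standard derivation of the cited fact.
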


	We are now going to use this in the framework of local sets of the GFF:  
{Denote by $S$ the interior of a} finite union of closed dyadic squares in $D$ with $I:= \partial S \cap \R \not= \emptyset$,
and a harmonic function $H$ in $D$ such that $H$ extends continuously to an open neighbourhood $I'$ of $I$ in $\R$ in such a way that $H =0$ on $I'$ (we then say that the boundary value of $H$ on $I$ is zero). Using the Cameron-Martin Theorem it is not hard to see that the GFF $\Gamma$ and  $\Gamma + H$ are mutually absolutely continuous, when restricted to $S$, i.e. when restricted to all test functions $f$ with support in $S$. 

Indeed, let $\tilde H$ be the bounded harmonic function in $D\backslash\partial S$ that is equal to $H$ on the boundary of $S$ and to zero on the boundary of $D$. Note that $\tilde H$ belongs to $\mathcal H^1_0(D)$ and that $(\tilde H, \Gamma)_\nabla$ depends only on the restriction of $\Gamma$ to $S$. Thus, using the Cameron-Martin Theorem and the domain Markov property of the GFF we obtain (see \cite{Dub,ShQZ}):

\begin{lemma}\label{abscnty}
Let $\Gamma$ be a (zero boundary) GFF in $D$ and denote its law restricted to $S$ by $\P$. Let also $\tilde \P$ be the law of $\Gamma + H$, restricted to $S$. Then $\P$ and $\tilde \P$ are mutually absolutely continuous with respect to each other. Moreover, the Radon-Nikodym derivative $d\tilde \P / d \P$  is a multiple of $\exp((\tilde H, \Gamma)_\nabla)$. 

Furthermore, if $D'$ is another domain as above, $\Gamma'$ is a GFF in $D'$ and $S\subset D\cap D'$ is at positive distance of $\partial D\triangle \partial D'$, then the laws of $\Gamma$ and $\Gamma'$ restricted to $S$ are mutually absolutely continuous.
\end{lemma}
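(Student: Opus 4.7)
My plan is to apply Cameron--Martin with the explicit shift $\tilde H$ and then restrict everything to $S$. Three checks are needed: (i) $\tilde H \in \mathcal H^1_0(D)$; (ii) $\tilde H = H$ on $S$; and (iii) $(\tilde H, \Gamma)_\nabla$ is measurable with respect to $\Gamma|_S$. Given these, Cameron--Martin gives that $\Gamma$ and $\Gamma + \tilde H$ are mutually absolutely continuous on $D$ with RN derivative proportional to $\exp((\tilde H,\Gamma)_\nabla)$; restricting to test functions supported in $S$ and using (ii) replaces $\tilde H$ by $H$ there, while (iii) expresses the density as a function of $\Gamma|_S$.

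Point (ii) is the uniqueness of the harmonic extension to $S$ from the common boundary data $H|_{\partial S}$. Point (i) follows from $\tilde H$ being bounded, harmonic off $\partial S$, continuous across $\partial S$, and having zero trace on $\partial D$: near $I \subset \partial D$ this uses $\tilde H = H$ together with $H = 0$ on $I'$, and on the remainder of $\partial D$ it holds by construction. For (iii), I would use the domain Markov decomposition $\Gamma = \Gamma^S_0 + \Gamma^{D\setminus \overline S}_0 + h_\Gamma$, with the first two summands independent zero-boundary GFFs in $S$ and $D\setminus\overline S$ and $h_\Gamma$ the harmonic extension in $D\setminus\partial S$ of the boundary values of $\Gamma$ on $\partial S$, with zero trace on $\partial D$. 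Integration by parts, using the harmonicity of $\tilde H$ on each component of $D\setminus\partial S$ together with the zero traces of $\Gamma^S_0$ on $\partial S$, of $\Gamma^{D\setminus \overline S}_0$ on $\partial S\cup\partial D$, and of $\tilde H$ on $\partial D$, kills the two bulk contributions and leaves $(\tilde H,\Gamma)_\nabla = (\tilde H, h_\Gamma)_\nabla$, which is determined by the trace of $\Gamma$ on $\partial S$ and hence by $\Gamma|_S$.

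For the second assertion I would reduce to the first by localization: pick an intermediate open set $\hat S$ with $S \subset \hat S$, still at positive distance from $\partial D \triangle \partial D'$. By the Markov property applied inside $\hat S$ to both fields, write $\Gamma|_{\hat S} = \Gamma^{\hat S}_0 + h^D$ and $\Gamma'|_{\hat S} = (\Gamma')^{\hat S}_0 + h^{D'}$, where the zero-boundary pieces have the same law and the random harmonic shifts encode the traces on $\partial \hat S$. Coupling the zero-boundary pieces to be identical, the difference $(h^D - h^{D'})|_{\hat S}$ is smooth and $\mathcal H^1$-bounded away from $\partial \hat S$; multiplying by a cutoff that is $1$ on $S$ and vanishes near $\partial \hat S$ places it in $\mathcal H^1_0(\hat S)$, and conditioning on this shift and applying the first assertion (to the GFF in $\hat S$) gives conditional absolute continuity on $S$, which integrates to the claim. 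The main technical obstacle will be making the integration-by-parts in (iii) rigorous in the distributional sense, since $\Gamma$ is only a generalized function and $\tilde H$ is merely Lipschitz across $\partial S$; a standard approach is to mollify $\tilde H$ and pass to the limit in the Dirichlet pairing, or equivalently to interpret $(\tilde H,\cdot)_\nabla$ as the action of the distribution $-\Delta \tilde H$, which is supported on $\partial S$, against $\Gamma$.
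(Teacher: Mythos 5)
Your argument is essentially the paper's: the paper proves this lemma by the one-paragraph sketch preceding its statement, namely defining $\tilde H$ as the bounded harmonic function in $D\setminus\partial S$ equal to $H$ on $\partial S$ and to $0$ on $\partial D$, noting that $\tilde H\in\mathcal H^1_0(D)$ and that $(\tilde H,\Gamma)_\nabla$ depends only on $\Gamma|_S$, and then invoking Cameron--Martin together with the domain Markov property; your write-up supplies more detail on each of these points than the paper does, and your checks (i)--(iii) are correct. One small slip in the second half: since $I=\partial S\cap\R\neq\emptyset$ and $S$ is only required to be at positive distance from $\partial D\triangle\partial D'$, the closure of $S$ (and hence of any intermediate $\hat S\subset D\cap D'$) will in general touch $\partial\hat S$ along the common boundary $\partial D\cap\partial D'$, so a cutoff that is $1$ on $S$ and vanishes near \emph{all} of $\partial\hat S$ does not exist. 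This is harmless because the difference of the two harmonic parts has zero boundary values on $\partial\hat S\cap\partial D\cap\partial D'$, so one only needs to cut off near $\partial\hat S\setminus(\partial D\cap\partial D')$ and then argue membership in $\mathcal H^1_0(\hat S)$ exactly as in your point (i); you should state the cutoff this way for the argument to cover the case actually used later (e.g.\ for level lines started on the boundary).
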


This absolute continuity property allows to change boundary conditions for local couplings away from the local sets:
{\begin{prop}\label{change boundary} 
	Under the previous conditions, suppose that $A$ is a BTLS for $\Gamma$ an $D$-GFF such that $A \subset S\subset D$ almost surely. Define $\tilde \P:= Z \exp((\tilde H, \Gamma)_\nabla)d\P$. Then $A$ is coupled as a local set with $\tilde \Gamma:=\Gamma-\tilde H$ (that is a $\tilde \P$ zero-boundary GFF). The corresponding harmonic function $\tilde h_A$ is equal to the unique bounded harmonic function on $D \setminus A$, with boundary values equal to those of $h_A-H$ on $\partial A \cup I$, and to $0$ on $\partial D\setminus I$. 
\end{prop}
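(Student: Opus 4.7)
The plan is to combine the Cameron-Martin theorem (Lemma \ref{abscnty}) with the Dirichlet-orthogonal decomposition of $\mathcal{H}^1_0(D)$ induced by the local set $A$. A preliminary observation is that $\tilde H$ lies in $\mathcal H^1_0(D)$: it equals the smooth function $H$ on $\overline S$, is the bounded harmonic extension of a smooth boundary trace on $\partial S$ outside $S$, and vanishes on $\partial D$, so it has finite Dirichlet energy. The Cameron-Martin theorem applied in $D$ then immediately gives that $\tilde \Gamma := \Gamma - \tilde H$ is a zero-boundary GFF under $\tilde \P$.

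To identify the local set decomposition of $A$ with respect to $\tilde \Gamma$, I would decompose $\tilde H$ orthogonally (in the Dirichlet inner product) as $\tilde H = \tilde H^\parallel + \tilde H^\perp$, with $\tilde H^\parallel \in \mathcal H^1_0(D\setminus A)$ and $\tilde H^\perp$ harmonic in $D\setminus A$. Because $\tilde H = H$ on $\overline S \supset A$ and $\tilde H = 0$ on $\partial D$, the component $\tilde H^\perp$ is the unique bounded harmonic function in $D\setminus A$ with boundary values $H$ on $\partial A$ and $0$ on $\partial D$. Conditionally on $\mathcal F_A$, under $\P$, the field $\Gamma^A := \Gamma - \Gamma_A$ is a zero-boundary GFF in $D\setminus A$, and orthogonality gives
\[
\exp((\tilde H, \Gamma)_\nabla) \;=\; \exp((\tilde H, \Gamma_A)_\nabla)\,\exp((\tilde H^\parallel, \Gamma^A)_\nabla),
\]
where the first factor is $\mathcal F_A$-measurable. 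Applying Cameron-Martin conditionally inside $D\setminus A$, we deduce that under $\tilde \P$, conditionally on $\mathcal F_A$, the field $\Gamma^A - \tilde H^\parallel$ is a zero-boundary GFF in $D\setminus A$.

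Rewriting $\tilde \Gamma = (\Gamma_A - \tilde H^\perp) + (\Gamma^A - \tilde H^\parallel)$ now exhibits $A$ as a local set of $\tilde \Gamma$ under $\tilde \P$, with $\tilde h_A = h_A - \tilde H^\perp$ on $D\setminus A$. By construction $\tilde H^\perp$ has boundary values $H$ on $\partial A$ and $0$ on $\partial D$, so $\tilde h_A$ has boundary values $h_A - H$ on $\partial A$ (agreeing with the stated value on $I$, where $h_A = H = 0$) and $0$ on $\partial D \setminus I$; boundedness of $\tilde h_A$ follows from that of $h_A$ and $\tilde H$. The main technical difficulty is to make the orthogonal decomposition and conditional Cameron-Martin step rigorous when $A$ is a random set of potentially complicated geometry. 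I would address this by first approximating $A$ from outside by finite unions of dyadic squares, where the orthogonal projection and the conditional application of Cameron-Martin are elementary, and then pass to the limit using the uniform bounds on $h_A$ and $\tilde H^\perp$ together with the thinness of $A$ guaranteed by the BTLS hypothesis.
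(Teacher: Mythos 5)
Your proposal is correct and takes essentially the same route as the paper: the paper's move of replacing the exponent $\tilde H$ by $\tilde H + \tilde h_A - h_A$ in the conditional computation (legitimate because the added function is harmonic in $D\setminus A$ and hence pairs to zero with $\Gamma^A$) is exactly your orthogonal decomposition $\tilde H = \tilde H^{\parallel} + \tilde H^{\perp}$ with $\tilde h_A - h_A = -\tilde H^{\perp}$, followed by the conditional Cameron--Martin shift of $\Gamma^A$ by $\tilde H^{\parallel}$ in $D\setminus A$. (Your closing dyadic-approximation precaution is not needed, since the Dirichlet-orthogonal decomposition is a deterministic Hilbert-space fact valid for any open $D\setminus A$, applied conditionally on $A$.)
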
}

\begin{proof}	
	Notice that by the Cameron-Martin Theorem for the GFF, the law of $\Gamma$ under $\tilde{\P}$ is that of $\tilde\Gamma + \tilde H$, where $\tilde \Gamma$ is a $\tilde \P$ GFF. Now we have to verify that with this change of variables $A$ is still a local set. Note that conditionally on ${\mathcal F}_A$,
	we can write $\Gamma=h_A+\Gamma^A$, where $\Gamma^A$ is a $\P$-GFF on $D\setminus A$. We have to show that  we can write $\Gamma^A= \tilde \Gamma^A +\tilde H+ \tilde h_A - h_A$, where $\tilde \Gamma^A$ is a $\tilde \P$-GFF on $D\setminus A$ and $\tilde h_A$ is as in the statement.
	Let $M$ be a measurable function of the field $\Gamma^A$, {then for some  $Z_A$ and $Z_A'$, measurable functions of $A$,}
	\begin{align*}
	\tilde \E\left[ M(\Gamma^A)\mid A,h_A \right] &=Z_A \E\left  [ M(\Gamma^A) \exp\left ( (\tilde H,\Gamma)_{\nabla} \right )\mid {\mathcal F}_A \right ] \\
	&=  Z_A' \E\left [ M(\Gamma^A) \exp\left ( (\tilde H,\Gamma^A)_{\nabla} \right )\mid { \mathcal F}_A \right ]\\
	&=  Z_A' \E \left[  M(\Gamma^A) \exp\left ( (\tilde H+\tilde h_A - h_A,\Gamma^A)_{\nabla} \right )\mid {\mathcal F}_A \right],
	\end{align*}
	where in the last equality we use that $\Gamma^A$ is a GFF and that $\tilde h_A - h_A$ is harmonic in $D \setminus A$. But, we know that (under $\P$), conditionally on ${\mathcal F}_A$, $\Gamma^A$ is just a GFF. {Thus using again the Cameron-Martin Theorem, Lemma \ref{BPLS}-(i) and the fact that $\tilde h_A-h_A$, $\tilde H$ are in $\mathcal H^1(D\backslash A)$ we can conclude.} 
\end{proof}

 {
\begin{cor}\label{change boundary2} 
Let $D'\subseteq D$ be another domain with the same properties. Under the previous conditions, suppose that $A$ is a BTLS for a GFF in $D$ such that $A \subset S\subset D'$ almost surely. Then, there exist an absolutely continuous probability measure $\tilde \P$ under which $A$ is coupled as a local set with a $\tilde \P$ zero-boundary GFF in $D'$ and the corresponding harmonic function $\tilde h_A$ is equal to the unique bounded harmonic function on $D' \setminus A$, with boundary values equal to those of $h_A-H$ on $\partial A \cup I$, and to $0$ on $\partial D'\setminus I$. \end{cor}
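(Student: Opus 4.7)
My plan is to obtain the corollary by composing two absolutely continuous changes of measure. The first handles the change of ambient domain from $D$ to $D'$, using the second statement of Lemma \ref{abscnty}; the second is Proposition \ref{change boundary} itself, applied inside $D'$ rather than $D$.

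Concretely, since $A \subset S \subset D'$, the set $\overline{S}$ is at positive distance from $\partial D \triangle \partial D'$, so the second statement of Lemma \ref{abscnty} yields an absolutely continuous probability measure $\P_1$ under which $\Gamma|_S$ has the law of the restriction to $S$ of a zero-boundary GFF in $D'$. Working on a sufficiently rich probability space that also carries an independent zero-boundary GFF on $D' \setminus \overline{S}$, we glue this with $\Gamma|_S$ to obtain a field $\Gamma^1$ on $D'$ that agrees with $\Gamma$ on $S$ and is a zero-boundary $D'$-GFF under $\P_1$. Since the BTLS property only constrains the conditional law of the field on arbitrarily small open neighbourhoods of $A$ (all of which lie in $S$), and the Radon--Nikodym density is a measurable function of $\Gamma|_S$ alone, the set $A$ remains a BTLS of $\Gamma^1$ under $\P_1$; its harmonic function $h_A^{(1)}$ is the bounded harmonic function on $D' \setminus A$ whose boundary values coincide with those of $h_A$ on $\partial A$ and are $0$ on $\partial D'$.

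I then apply Proposition \ref{change boundary} verbatim to $\Gamma^1$, with $D'$ playing the role of $D$ and the same data $S$, $I$, and $H$ (whose restriction to $D'$ still meets the hypotheses since $H$ is harmonic on $D \supset D'$ and vanishes on a neighbourhood of $I$ in $\R$). This gives a further change of measure, producing $\tilde{\P}$ that is absolutely continuous with respect to $\P$, under which $\tilde{\Gamma} := \Gamma^1 - \tilde{H}$ (with $\tilde{H}$ bounded harmonic on $D' \setminus \partial S$, equal to $H$ on $\partial S$ and $0$ on $\partial D'$) is a zero-boundary $D'$-GFF and $A$ is a local set of $\tilde{\Gamma}$ with harmonic function $\tilde{h}_A$ as described. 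The main technical point is the first step, namely checking that $A$ persists as a BTLS after the domain-change tilt: this is routine given that the density depends only on $\Gamma|_S$, while the local-set property is characterized via conditional laws on neighbourhoods of $A$ contained in $S$, and the boundedness and thinness are inherited from the original BTLS.
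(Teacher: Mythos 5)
Your two-step architecture (first change the ambient domain, then apply Proposition \ref{change boundary} inside $D'$) is plausible, but the first step is exactly where the content of the corollary lies, and your treatment of it has a genuine gap. First, the explicit construction is incorrect as stated: gluing $\Gamma|_S$ to an \emph{independent} zero-boundary GFF on $D'\setminus\overline S$ does not produce a zero-boundary GFF on $D'$, since the restrictions of a $D'$-GFF to $S$ and to $D'\setminus\overline S$ are correlated (the field beyond $\partial S$ involves the harmonic extension of its behaviour near $\partial S$); one must instead sample the extension from the conditional law of the $D'$-GFF given its restriction to $S$. That is repairable. The serious issue is the claim that $A$ ``remains a BTLS of $\Gamma^1$ under $\P_1$'' because the density depends only on $\Gamma|_S$ and locality ``only constrains the conditional law on neighbourhoods of $A$''. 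Locality is not a property of the field near $A$: it prescribes the conditional law of the field in \emph{all} of $D'\setminus A$ given $(A,\Gamma_A)$, and the identification of the new harmonic function -- in particular its vanishing on $\partial D'$ -- concerns the field far from $A$. Moreover, the Radon--Nikodym derivative supplied by the second part of Lemma \ref{abscnty} is not of Cameron--Martin form (it changes the covariance, not merely the mean), so the computation proving Proposition \ref{change boundary} -- which splits $(\tilde H,\Gamma)_\nabla=(\tilde H,\Gamma_A)_\nabla+(\tilde H,\Gamma^A)_\nabla$ and applies the Cameron--Martin theorem to the conditional GFF $\Gamma^A$ -- does not cover your first tilt. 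To justify that step you would need a conditional (given ${\mathcal F}_A$) version of the domain-change absolute continuity, comparing GFFs in the random domains $D\setminus A$ and $D'\setminus A$, which is essentially the statement to be proved; calling it routine hides the actual work.

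By contrast, the paper performs the domain change without any measure tilt: it uses the Markov decomposition of the $D$-GFF over the deterministic set $D\setminus D'$, writing the field as a zero-boundary GFF in $D'$ plus an independent part that is harmonic in $D'$, checks (via Lemma 3.9 of \cite{SchSh2}) that $A$ is a local set of the $D'$-GFF component with an explicitly identified harmonic function, and only then invokes Proposition \ref{change boundary}, conditioning on the independent harmonic remainder, to adjust the boundary values to those of $h_A-H$ on $\partial A\cup I$ and $0$ on $\partial D'\setminus I$. If you wish to salvage your scheme, the cleanest repair is to replace your first tilt by this exact decomposition; otherwise you must genuinely prove that the covariance-changing tilt preserves the local coupling together with the claimed harmonic function.
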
}

\begin{proof}	
 It can be shown that $A$ is a local set for $\Gamma^{D\backslash D'}$ such that its harmonic function $(h^{D\backslash D'})_A$ goes to 0 on $\overline \partial D \setminus I$ and $(h^{D\backslash D'})_A-h_A+h_K$ goes to $0$ on $\partial A \cup I$ (see Lemma 3.9-1 of \cite{SchSh2}, or \cite{WWln2}). We conclude using the fact that $h_{D\backslash D'}$ is independent of $h^{D\backslash D'}$ and Proposition \ref{change boundary}.
 \end{proof}

\subsection {SLE$_4$ and the GFF with more general boundary conditions}
\subsubsection* {SLE$_4$ as level lines of the GFF}
Let us start by recalling some well-known features of the Schramm-Sheffield coupling of SLE$_4$ with the GFF in the upper half-plane $\HH$:
{
Consider the bounded harmonic function $F_0  (z)$ in the upper-half plane with boundary values $-\lambda$ on $\R_-$ and $+\lambda$ 
on $\R_+$. There exists a unique law on random simple curves  $(\eta(t), t \geq 0 ) )$ (parametrized by half-plane capacity) in the closed upper half-plane from $0$ to infinity that can be coupled with a GFF $\Gamma$ so
that the following property holds for all $t \geq 0$. (we state it in a somewhat strange way that will be easier to generalize):
\begin{description}
\item [$(**)$] The set  {$\eta[0, t]$} is a BTLS of the GFF $\Gamma$, with harmonic function $h_t$ defined as follows: $h_t + F_0$ is the  unique bounded harmonic function 
in $\H \setminus \eta[0,\min \{t, \tau\}]$ with boundary values $-\lambda$  on the left-hand side of $\eta$, $+ \lambda$ on the right side of $\eta$, and with the same boundary values as $F_0$ on $\R_-$ and $\R_+$. 
\end{description}
Furthermore, this curve is then a SLE$_4$ and when one couples SLE$_4$ with a GFF $\Gamma$ in this way, then the SLE$_4$ process 
is in fact a measurable function of the GFF (see \cite {Dub,MS1,SchSh2,ShQZ,WWln2} for all these facts). }

This statement can be generalized to more general harmonic functions $F$, which gives rise to SLE$_4 (\rho)$-type processes. See for instance \cite{PoWu}.

\subsubsection* {More general boundary conditions}
{Now we generalize this definition of level lines to the GFF with more general boundary conditions. By conformal invariance if we wish to define them for all domains $D$ described at the beginning of Section \ref{S3} it is enough to consider the case where  $D = \H \setminus K$, where $K$ is a countable union of closed discs such that any compact subset of $\H$ intersects only finitely many of these discs. 
	
	Let $H$ be a  harmonic function on $D$ with zero boundary conditions on some real neighbourhood $I$ of the origin. For a random simple curve $\eta$ in $D$ we define time $\tau$ to the (possibly infinite) smallest positive time $t$ at which $\liminf_{s \to t-} d( \eta_s, \partial D) = 0$. The generalized level line for the GFF in $D$ with boundary conditions $F_0 + H$ up to the first time it hits the boundary is then defined as follows:

\begin {lemma}\label{defll} [Generalized level line]  There is a unique law on random simple curves  $(\eta(t), t \geq 0 ) )$  in $D$ parametrized by half-plane capacity (i.e., viewing $\eta((0,t])$ as a subset of $\H$ instead of $\H \setminus K$) with 
$\eta(0) = 0$, $\eta (0, \tau ) \subset D$ that can be coupled with a GFF $\Gamma$ so
that $(**)$ holds  for all $0 \leq t < \tau$,
 when one replaces $F_0$ by $F_0 + H$ and considers $D$ instead of $\H$. Moreover, the curve $\eta$ is measurable with respect to $\Gamma$. We call $\eta$ the generalized level line of $\Gamma+F_0+H$ in $D$.
\end {lemma}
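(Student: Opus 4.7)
The plan is to bootstrap from the standard SLE$_4$ coupling in $\H$ with boundary conditions $F_0$ by using the absolute continuity tools from Section \ref{ssabscon}. The key observation is that on any subdomain of $D$ that stays at positive distance from $\partial D \setminus I$, the law of a GFF in $D$ with boundary conditions $F_0 + H$ is mutually absolutely continuous with the law of a GFF in $\H$ with boundary conditions $F_0$. We therefore transport the standard SLE$_4$ curve through this absolute continuity to obtain our generalized level line, first stopped before it approaches the inaccessible part of $\partial D$, and then extended by exhaustion.

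Concretely, fix an exhaustion $D_1 \subset D_2 \subset \cdots$ of $D$ by relatively compact open sets, each a finite union of open dyadic squares, with $\overline{D_n} \cap \partial D \subset I$, and let $\tau_n$ denote the first time the candidate curve exits $D_n$. Combining the two assertions of Lemma \ref{abscnty} (first identifying the laws of an $\H$-GFF and a $D$-GFF restricted to $D_n$, and then performing the Cameron--Martin shift by $H$, whose zero-boundary harmonic extension to $D$ lies in $\mathcal{H}^1_0(D)$ because $D_n$ is bounded away from $\partial D \setminus I$), one obtains that the laws of $\tilde \Gamma + F_0$ and $\Gamma + F_0 + H$ restricted to $D_n$ are mutually absolutely continuous, where $\tilde \Gamma$ is a standard $\H$-GFF. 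Since the standard SLE$_4$ curve $\tilde \eta$ coupled with $\tilde \Gamma$ is a measurable function of $\tilde \Gamma$, its initial segment up to the exit of $D_n$ is a measurable function of $(\tilde \Gamma + F_0)|_{D_n}$, and the absolute continuity transfers it to a curve $\eta^{(n)}$ that is a measurable function of $(\Gamma + F_0 + H)|_{D_n}$.

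To verify that $\eta^{(n)}$ satisfies $(**)$ with $F_0 + H$ in $D$, we use Proposition \ref{change boundary} (equivalently Corollary \ref{change boundary2}) to compute the transformation of the harmonic function under the Cameron--Martin shift: starting from the standard BTLS harmonic function $\tilde h$ (with $\tilde h + F_0$ equal to $\pm \lambda$ on the two sides of $\tilde \eta$ and agreeing with $F_0$ on the rest of $\partial \H$), one obtains after the shift exactly the harmonic function described in $(**)$ with $F_0 + H$ and domain $D$. The remaining BTLS conditions and thinness are local properties of the joint law and are preserved by absolute continuity. Uniqueness of the standard SLE$_4$ coupling transfers through absolute continuity to yield uniqueness of $\eta^{(n)}$, which in turn forces compatibility of $\eta^{(n)}$ and $\eta^{(n+1)}$ on $[0, \tau_n]$. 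Setting $\eta$ to be the consistent limit up to $\tau := \lim_n \tau_n$, one checks that this $\tau$ matches the definition given in the lemma ($\liminf_{s \to t-} d(\eta_s, \partial D) = 0$), and measurability of $\eta$ with respect to $\Gamma$ is inherited from the standard coupling together with the explicit Radon--Nikodym derivative being itself a function of $\Gamma$.

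The technical heart of the argument is the harmonic function computation in the third paragraph, where one must track carefully how the boundary values on the curve and on $\partial D$ emerge after the Cameron--Martin shift by $H$. This is the content of Proposition \ref{change boundary}/Corollary \ref{change boundary2}, and its correct application relies on staying at positive distance from $\partial D \setminus I$ on each $D_n$ so that $H$ has a bounded zero-boundary harmonic extension in $\mathcal{H}^1_0(D)$; a secondary technical point is verifying that the half-plane capacity parametrization of $\tilde \eta$ transfers unchanged, which is automatic since the parametrization is a deterministic function of the trace viewed as a subset of $\H$.
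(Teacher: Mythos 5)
Your overall route---exhaust $D$ by unions of dyadic squares meeting $\partial D$ only inside $I$, compare with the Schramm--Sheffield coupling in $\H$ via Lemma \ref{abscnty} and Proposition \ref{change boundary}/Corollary \ref{change boundary2}, patch by compatibility, and read off measurability---is the same as the paper's. But one step that carries the real weight of your existence argument is asserted without justification: you claim that, for the standard coupling in $\H$, the initial segment of $\tilde \eta$ up to its exit of $D_n$ is a measurable function of $(\tilde \Gamma + F_0)|_{D_n}$. This local determination does \emph{not} follow from the cited global fact that $\tilde \eta$ is a measurable function of the whole field; it is exactly the kind of statement that needs the uniqueness machinery at stake here. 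In the paper's logic it is a \emph{consequence} of Lemma \ref{defll} (it is only used later, in the proof of Lemma \ref{notouch}), not an input: existence is obtained by transporting the Schramm--Sheffield coupling through the change of measure of Corollary \ref{change boundary2} (no measurability with respect to the restricted field is needed for that), and measurability of $\eta$ is deduced at the very end from existence plus uniqueness. So you should either supply a proof of the local determination in the $H=0$ case (which essentially reproduces the uniqueness argument you are trying to avoid) or restructure the existence step as the paper does.

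A second, smaller compression: ``uniqueness of the standard SLE$_4$ coupling transfers through absolute continuity'' is not automatic, because the uniqueness of \cite{Dub,SchSh2} concerns curves coupled with a genuine zero-boundary GFF in a simply connected domain satisfying $(**)$ for all times, whereas your candidates are coupled with a GFF in the non-simply-connected $D$ with boundary data $F_0+H$, and only the field laws restricted to $D_n$ are being compared. One must first upgrade a candidate curve, stopped upon exiting $S$, into a bona fide $(**)$-coupling with a zero-boundary GFF in a simply connected domain $D' \subset D$ strictly containing $S$ (this is precisely what Corollary \ref{change boundary2} provides), then map conformally to $\H$ and invoke Schramm--Sheffield uniqueness. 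You cite the right tools but use them only for the harmonic-function bookkeeping; making this structural step explicit---and requiring the exhausting domains to be simply connected and to contain $0$ on their boundary, which you omit---is what closes the argument and brings your proof in line with the paper's.
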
}

\begin{proof}
	Let $\mathcal{S}$ denote the collection of regions $S$ such that $S$ is the interior finite union of closed dyadic squares in $D$ with $0 \in  \partial S \cap \partial D \subset I$ and with $S$ simply connected. Note that is enough to show that for all $S\in \mathcal S$ there is at most one curve satisfying $(**)$, when one replaces $F_0$ by $F_0 + H$, until the time it exits $S$. 
	
	Suppose by contradiction that $\eta_1$ and $\eta_2$ are two different curves with this property and that with positive probability they do not agree. Thanks to Corollary \ref{change boundary2}, we can construct two local sets $\tilde \eta_1$ and $\tilde \eta_2$ coupled with the same GFF in a simply connected domain $D'\subset D$ that strictly contains $S$, such that when we apply $\phi$, the conformal transformation from $D'$ to $\H$, $(**)$ holds for $\phi(\tilde \eta_1)$ and $\phi(\tilde \eta_2)$ until the time they exit $\phi(S)$. The fact that they are different with positive probability contradicts the uniqueness of the Schramm-Sheffield coupling proved in \cite{Dub,SchSh2}.
	
	It remains to show the existence of this curve. Take $S_n$ an increasing sequence of elements in $\mathcal S$ such that their union is $D$. We can define $\eta$ until the time it exits $S_n$, by using the Schramm-Sheffield coupling and Proposition \ref{change boundary2}. The above argument shows that they are compatible and so we can define $\eta$ until the first time it touches $\partial D$. The measurability is just a consequence of the existence and uniqueness.
\end{proof}

\begin{figure}[ht!]	   
	\centering
	\includegraphics[scale=0.39]{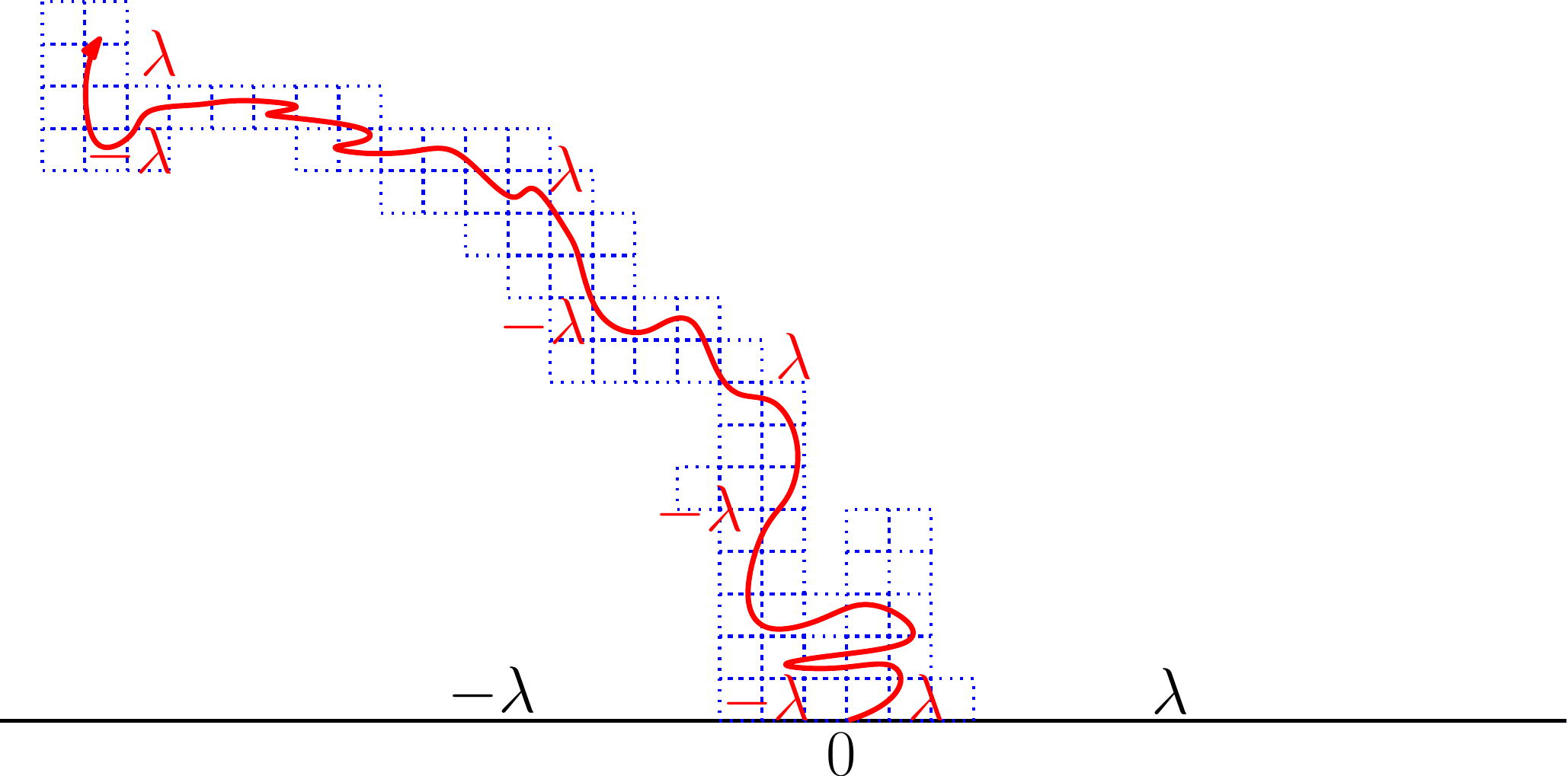}
	\includegraphics[scale=0.39]{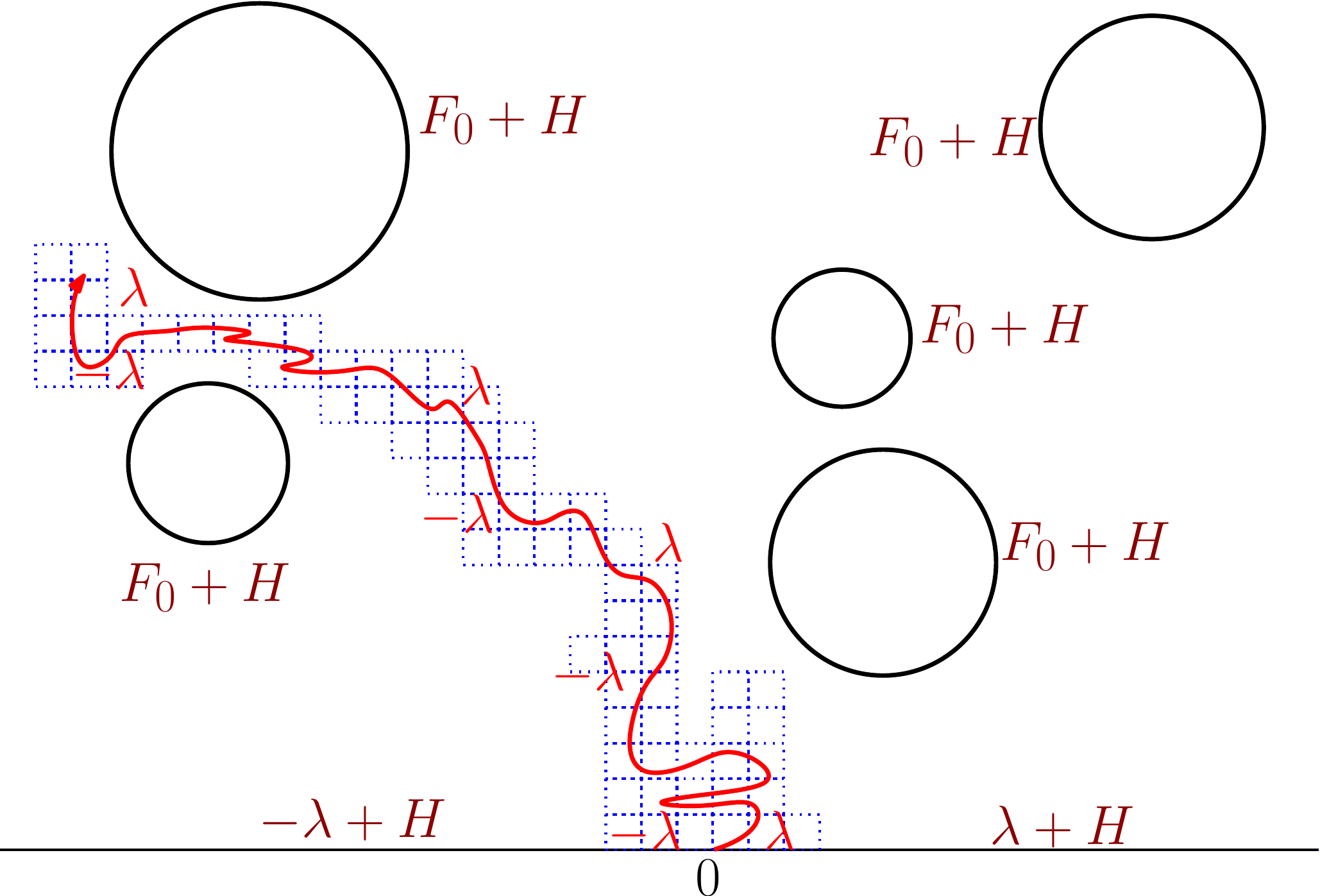}
	\caption {Defining generalized level lines by absolute continuity.}
	\label {abscon}
\end{figure}
{Note that for all stopping times $\tau$ of $\eta([0,t])$, such that $\tau$ is measurable with respect to the $\sigma$-algebra generated by $\eta$, we have that ($**$) holds for $t=\tau$.}

\subsubsection*{Boundary hitting of generalized level lines}
In \cite{PoWu} Lemma 3.1, the authors show that a generalized level line in $\H$ corresponding to $F_0 + H_0$ with $H_0$ zero in a neighbourhood of the origin, bounded, $F_0 + H_0 \geq -\lambda$ on $(-c,0)$ and $F_0 + H_0 \geq \lambda$ elsewhere cannot touch nor accumulate in a point of $\R \setminus [-c,0]$. Combined with our previous considerations, we extend this to:
  {
 \begin{lemma}[Boundary hitting of generalized level lines] \label{notouch}
 	Let $\eta$ be a generalized level line of $\Gamma + F_0 + H$ in $D$ such that $H$ is harmonic in $D$ and has zero boundary values in a neighbourhood $I$ of zero.{ Suppose $F_0 + H \geq \lambda$ in $J\cap \partial D$, with $J$ some open set of  $D$}. Let $\tau$ denote the first time at which $\inf \{ d ( \eta_s, J\cap \partial D), s< t \} = 0$. Then, the probability that $\tau < \infty$ and that $\eta(t)$ converges to a point in $J$ as $t \to \tau^-$ is equal to $0$. This also holds if $F_0 + H \leq -\lambda$ in $J\cap \partial D$ for $J$ an open set of $D$.
 \end{lemma}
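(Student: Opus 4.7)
The plan is to argue by contradiction and reduce, via the absolute continuity results of Section~\ref{ssabscon}, to the half-plane set-up treated in Lemma~3.1 of \cite{PoWu} that was recalled just before the statement. Assume that with positive probability $\tau<\infty$ and $\eta(t)$ converges to some $z^\ast\in J$ as $t\to\tau^-$. By inner regularity I can fix a point $z_0\in J\cap\partial D$ and a radius $r>0$ small enough that $\overline{B(z_0,2r)}\cap\partial D\subset J\cap\partial D$ and that, with positive probability, the first time $T$ at which $\eta$ enters $\overline{B(z_0,r)}$ satisfies $T<\tau$, $\eta$ remains in $B(z_0,2r)$ on $[T,\tau)$, and $\eta(t)$ converges to a point of $\overline{B(z_0,r)}\cap\partial D$.

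After stopping at $T$, Lemma~\ref{defll} and the Markov property imply that the continuation of $\eta$ is a generalized level line of the GFF in $D\setminus\eta[0,T]$ with boundary values $F_0+H+h_T$ on $\partial D$ extended by $\pm\lambda$ on the two sides of $\eta[0,T]$. Let $D'$ be the connected component of $(D\cap B(z_0,2r))\setminus\eta[0,T]$ whose boundary contains $\eta(T)$; for $r$ sufficiently small this is simply connected. Viewing the continuation of $\eta$ up to the first exit of $D'$ as a BTLS of the GFF $\Gamma-\Gamma_{\eta[0,T]}$ in $D\setminus\eta[0,T]$ that is almost surely contained in $D'$, Corollary~\ref{change boundary2} says that its law is mutually absolutely continuous with the law of the generalized level line in $D'$ of a zero-boundary GFF shifted by a prescribed bounded harmonic function; using in addition Proposition~\ref{change boundary} I may choose the boundary values on $D\cap\partial B(z_0,2r)\cap\partial D'$ freely, while keeping $\pm\lambda$ on the two sides of $\eta[0,T]$ and keeping the inherited values $\ge\lambda$ on $\partial D\cap\partial D'$.

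I then conformally map $D'$ to $\H$ by some $\phi$ with $\phi(\eta(T))=0$, arranged so that $\phi(\partial D\cap\partial D'\cap\overline{B(z_0,r)})$ is an open interval $K\subset\R$. The pushforward boundary values on $\R$ are continuous away from finitely many exceptional points (the images of the tip, of the endpoints of $K$, and of the endpoints of the arcs of $\partial B(z_0,2r)$); they are globally bounded (Lemma~\ref{finite points} controlling $h_T$ near those exceptional points); and they are at least $\lambda$ on $K$. By adjusting the free bounded harmonic piece chosen in the previous paragraph, I can arrange the total boundary data on $\R$ to match a Po-Wu configuration $F_0+H_0$ with $H_0$ zero near $0$, bounded, and $F_0+H_0\ge\lambda$ outside a small interval around $0$. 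Lemma~3.1 of \cite{PoWu} then ensures that the transformed generalized level line almost surely does not accumulate at any point of $K$; transporting this back through absolute continuity contradicts the positive-probability convergence to a point of $K$ set up in the first paragraph. The case $F_0+H\le-\lambda$ follows from the symmetry $\Gamma\mapsto-\Gamma$ of the coupling.

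The step I expect to be most delicate is the reduction to a simply connected $D'$ together with the verification that the pushforward of $F_0+H+h_T$ to $\R$ stays uniformly bounded up to finitely many isolated singularities: because $D$ can be multiply connected and $\eta[0,T]$ may meander inside $B(z_0,2r)$ before time $T$, one has to control the geometry carefully to apply Lemma~\ref{finite points} and to ensure that the conformal image of $\partial D\cap\partial D'\cap\overline{B(z_0,r)}$ really is a single interval $K$ on which Po-Wu's conclusion can be invoked.
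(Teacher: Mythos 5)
There is a genuine gap at the heart of your reduction. After you stop at $T$ and pass to the component $D'$ of $(D\cap B(z_0,2r))\setminus\eta[0,T]$ containing the tip, the boundary of $D'$ in general contains several arcs of the already-traced curve $\eta[0,T]$, and on some of these arcs the side facing $D'$ carries the value $-\lambda$ while lying far from the tip. Under your conformal map $\phi$ these become intervals of $\R$ away from the origin on which the boundary data equals $-\lambda<\lambda$. The version of Lemma~3.1 of \cite{PoWu} recalled in the paper is a \emph{global} statement: it requires $F_0+H_0\ge-\lambda$ on an interval adjacent to the starting point and $\ge\lambda$ everywhere else on $\R$, and its conclusion is then non-accumulation on the complement of that interval. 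So you cannot invoke it merely because the data is $\ge\lambda$ on the image $K$ of $J$; you need the hypothesis on all of $\R$, and it fails on the $-\lambda$ arcs. Moreover these values cannot be ``adjusted by the free bounded harmonic piece'': Proposition~\ref{change boundary} and Corollary~\ref{change boundary2} only let you modify the boundary data on parts of the boundary from which the local set (here the continuation of $\eta$) stays at positive distance, and the continuation starts \emph{on} the past curve and may approach its other arcs, so Cameron--Martin cannot touch them. (A secondary point: restarting a generalized level line at the stopping time $T$ as a generalized level line of the conditional field also needs an argument; the paper only records that $(**)$ holds at stopping times.)

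The paper's proof avoids all of this by never conditioning on the past of the curve. It fixes a dyadic region $S$ whose boundary meets $\partial D$ only inside $I\cup J$, uses the measurability statement of Lemma~\ref{defll} (the level line up to exiting $S$ is a function of the field restricted to $S$), and compares the field $\Gamma+F_0+H$ with an auxiliary field $\Gamma+F_0+H_0$, where $H_0$ is chosen once and for all on $\H$ to equal $H$ on the target interval $(x-\eps,x+\eps)$, $2\lambda$ to its left and $0$ to its right; this $F_0+H_0$ satisfies the Po--Wu hypotheses globally, so its level line almost surely does not exit $S$ through $(x-\eps,x+\eps)$, and Lemma~\ref{abscnty} (the difference $H-H_0$ vanishes near $\partial S\cap\R$) transfers this to $\eta$. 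A union over rational $x,\eps$ and a separate argument with a cut $\gamma$ plus Corollary~\ref{change boundary2} for boundary points on the components of $K$ then finish the proof. If you want to salvage your stopping-time strategy, you would need a genuinely local version of the Po--Wu boundary-hitting estimate (or an argument that the continuation cannot accumulate on the $-\lambda$ arcs before reaching $K$), which is precisely what the paper's global comparison is designed to sidestep.
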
}

\begin{proof}
{Let $S$ be the interior of a finite union of closed dyadic squares intersected with $D$ such that $0 \in  \partial S$, $x \in \partial S$, $\partial S \cap \partial D \subset I \cup J$ and $S \cap K = \emptyset$. From Lemma \ref{defll} we know that for any such $S$, a generalized level line $\eta$ is measurable up to leaving $S$ w.r.t. the GFF restricted to $S$. 

We first consider the case $ x<0$ and $J\cap \partial  D= (x-\epsilon,x+\epsilon)$ with $0<\epsilon< -x$. Define a harmonic function $H_0$ on $\H$ such that $H_0 = H$ on $(x-\eps,x+\eps)$, $H_0 = 0$ on $[x+\eps,\infty)$ and $H_0 = 2\lambda$ on $(-\infty,x-\eps]$. The laws of the GFFs $\Gamma + F_0 + H_0$ and $\Gamma + F_0 + H$ restricted to $S$ are absolutely continuous by Lemma \ref{abscnty}. Also, both of these generalized level lines are measurable w.r.t. the GFF until they exit $S$. Now $F_0 +H_0$ satisfies the conditions mentioned above and hence almost surely the generalized level line of $\Gamma + F_0 + H_0$ does not exit from $S$ on $\partial S \cap (x-\eps,x+\eps)$ in finite time. By absolute continuity and measurability of both generalized level lines, the result follows for this $J$. The case $x > 0$ is treated similarly. By the union bound the result is true simultaneously for all $x_n \in \Q$ and $\epsilon_n \in \Q$ with $\eps_n < |x_n|$ and $F_0 + H \geq \lambda$ on $(x_n-\eps_n, x+\eps_n)$. 
Given that 
all $J\cap \partial D\subseteq \R$ satisfying the conditions of the theorem are written as the union of these intervals, we conclude.

 To treat the case where $x$ is on the boundary of some component $D_1$ of $K$, notice that it suffices to consider $S$ that do not surround $D_1$ and that for any such $S$ we can connect $D_1$ to $\R$ using some curve $\gamma$ such that $\H \setminus (D_1 \cup \gamma)$ is simply connected and contains $S$. The previous argument and Corollary \ref{change boundary2} then help to conclude.}
\end{proof}

\section{Review of the construction of CLE$_{4}^M$ and its coupling with the GFF}
\label{CLE4}

In this section,  we review the coupling of the GFF with the nested CLE$_4$ using Sheffield's SLE$_4$ exploration trees (i.e. the branching SLE$_4(-2)$ process). 
 As this section does not contain really new results, we try to be rather brief, and refer to \cite {She,WWln2} for details. 
 Note that we will present an alternative construction of the CLE$_4$ and its coupling to the GFF in Section \ref {S6}. 

\subsection {Radial symmetric SLE$_4 (-2)$}
Let us first recall the definition of the radial (symmetric) SLE$_4 ( -2)$ process targeted at $0$ (for non-symmetric variants see \cite{She}). It is the radial Loewner chain {of closed hulls} $(K_t)_{t \ge 0}$ in the unit disc, with driving function $\xi_t$ defined as follows:
\begin{itemize}
\item Start with a standard real-valued Brownian motion $(B_t, t \ge 0)$ with $B_0 = 0$. 
\item Then, define the continuous process $U_t := \int_0^t \cot (B_s ) \d s$.  
\item Finally, set $W_t = 2B_t+U_t$ and $\xi_t := \exp (i W_t) $. 
\end{itemize}
We set $D_t = \D \setminus K_t$,  and denote by $f_t$ to be the conformal maps from $D_t$ onto $\D$ normalized at the origin.

Notice that as the integral $\int_0^t |\cot (B_s )| ds$ is almost surely infinite, the definition of $U_t$ can not be viewed as an usual absolutely converging integral. Yet, one can for instance define first $U_t^\eps$ as the integral of $\I{d( B_s, \pi \Z) \ge  \eps } \cot (B_s)$ and show that these approximating processes $U_t^\eps$ converge in $L^2$ to a continuous processes $U_t$. See e.g. \cite{WW} for a discussion of the chordal analogue.

We now describe some properties of the hulls generated by the radial SLE$_4 (-2)$. For more details, see \cite {She,Wln,WW}: 
\begin {enumerate}
\item 
The process $O_t := \exp ( i U_t)$ describes the evolution of the marked point in the SLE$_\kappa ( \rho )$ framework. In particular, SLE coordinate change considerations (see \cite {Dub0,SchShW})  show that during the time-intervals 
during which $B_t$ is not in $\pi \Z$, the process behaves exactly like a chordal SLE$_4$ targeting the boundary prime end $f_t^{-1} (O_t)$ in the domain $D_t$.
\begin{figure}[ht!]
\begin{center}
\includegraphics[width=3.1in]{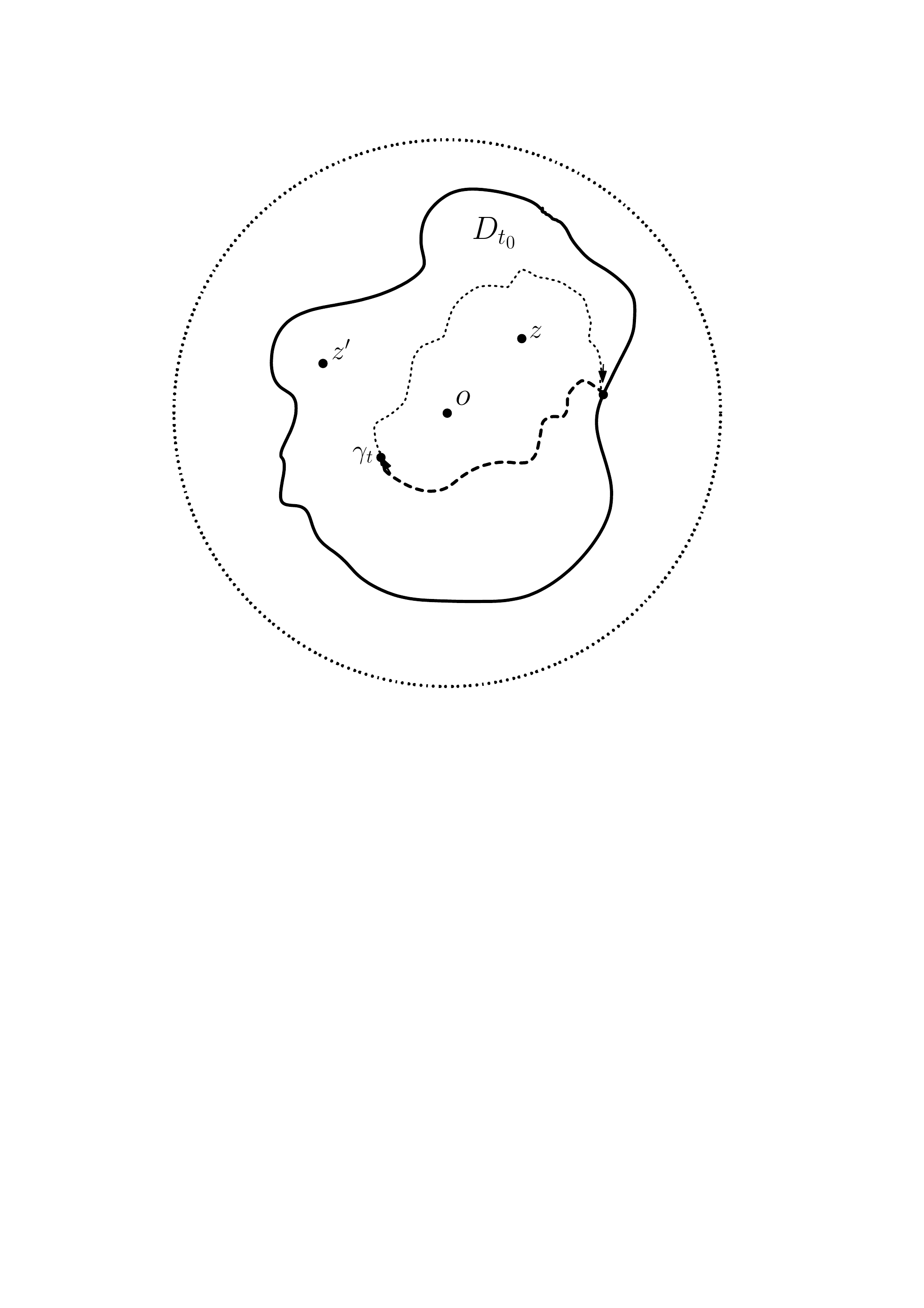}
\caption{\label{figDt}Sketch of $D_{t_0}$, the stretch $\gamma(t_0, t]$. The harmonic measure from $0$ of the ``top part'' of $\gamma[t_0, t]$ corresponds to $B_t$. This loop corresponds to an excursion away from $\pi \Z$ by each of $B$, $B^z$ and $B^{z'}$. 
It corresponds to an increase of $\pi$ for $B$ and for $B^{z}$, but 
not for $B^{z'}$. The end-time of this loop is $\sigma (z,z')$.}
\end{center}
\end{figure} 

\item
It is therefore clear that during the open time-intervals at which $B$ is not in $\pi \Z$,
the Loewner chain is generated by a simple continuous curve. These excursions of $B$ away from $\pi \Z$ correspond to the intervals during which the SLE$_4 (-2)$ traces ``quasi-loops'': If $(t_0, t_0')$ is an  excursion interval away from $\pi \Z$, 
then $\gamma (t_0, t_0')$ is the image of a loop from $\xi_{t_0}$ to itself in $\D$ via $f_{t_0}^{-1}$. 
When $B_{t_0}= B_{t_0'}$, 
then this quasi-loop does not surround the origin, when $B_{t_0'} = B_{t_0} \pm \pi$, it surrounds the origin clockwise or anti-clockwise. We stress that during any time-interval corresponding to an excursion of $B$ away from $\pi \Z$ {(e.g during $(t_0, t_0')$)}, the tip of the Loewner chain does not touch its past boundary and at the end of the excursion it accumulates at the prime end corresponding to the starting point.
\item { It is known (via the loop-soup construction of CLE$_4$, see \cite {ShW}) that these quasi-loops are in fact proper loops (i.e. that $f_{t_0}^{-1} (\xi_{t_0})$ is a proper boundary point). It can be also shown that the whole SLE$_4 (-2)$ is {\em generated} by a two-dimensional curve, see \cite {MSW}, but we do not need this fact in the present paper. }
\end {enumerate}

 For any other $z \in \D$, one can define the radial SLE$_4 (-2)$ from $1$ to $z$ in $\D$ to be the conformal image of the  radial SLE$_4 (-2)$ targeting the origin, by the Moebius transformation of the unit disc that maps $0$ to $z$, and $1$ onto itself. This is now a Loewner chain growing towards $z$, and it is naturally parametrized using the log-conformal radius seen from $z$.
 
 Moreover the radial SLE$_4(-2)$ processes for different target points can be coupled together in a nice way. This target-invariance feature was first observed in \cite {Dub0,SchShW}, and is closely 
 related to the decompositions of SLE$_4 (-2)$ into SLE$_4$ ``excursions'' mentioned in (1). It says that for any $z$ and $z'$, the SLE$_4 (-2)$ targeting $z$ and $z'$ can be coupled in such a way that 
they coincide until the first time $\sigma(z,z')$ at which $z'$ gets disconnected from $z$ for the chain targeting $z$ and evolve independently after that time. Note that in this coupling, the natural time-parametrizations of these two processes do not coincide. However, using the previous observation about the relation between the excursions of $B$ and the intervals during which the SLE$_4 (-2)$ 
traces a simple curve, we see that up to time $\sigma(z,z')$ the excursion intervals away from $\pi \Z$ by the Brownian motions used to generate the SLE$_4(-2)$ processes targeting $z$ and $z'$ correspond to each other. The time-change between the two Brownian motions can be calculated explicitly. For instance, if we define $B_t^z$ to be the process obtained by time-changing the Brownian motion used in the construction that targets $z$ via the log-conformal radius of $D_t$ seen from $0$ (instead of $z$), then as long as $t < \sigma (z, 0)$,
 $$d B_t^z := 2\pi P^{f_t(z)} dB_t$$
where by $P^z$ we denote the Poisson kernel in $\D$ seen from $z$ and targeted at $\xi_t$ (it follows from the Hadamard formula given below).

\subsection {SLE$_4 (-2)$ branching tree and CLE$_4$}
 
The target-invariance property of SLE$_4(-2)$ leads to the construction of the radial SLE$_4 (-2)$ branching-tree. We next summarize its definition and its main properties: 
\begin {itemize}
 \item For a given countable dense collection $\mathcal {Z}$ of points in $\D$, it is possible to define on the same probability space a collection of radial SLE$_4 (-2)$ processes in such a way that for any $z$ and $z'$, they coincide (modulo time-change) up to the first time at which $z$ and $z'$ get disconnected from each other, and behave independently thereafter. 
 \item For any $z$ consider the first time $\tau_z$ at which the underlying driving function driving function exits the interval $(- \pi, \pi)$ and define $O(z)$ to be the complement of $K_t$ at that time containing $z$. Define $O(z')$ similarly. The previous property shows that $z' \in O(z)$ if and only if $z \in O(z')$. 
\item 
The CLE$_4$ carpet is then defined to be the complement of $\cup_{z \in \mathcal{Z}} O(z)$. In this article, we write CLE$_4$ to denote this set (i.e. we call CLEs to be the random fractal sets, not the collection of loops). 
 \end {itemize}
 
Notice that from the construction, it is not obvious that the law of the obtained CLE$_4$ (and its nested versions) does not depend on the choice of the starting point (here $1$) on $\partial \D$ (i.e. of the root of the exploration tree). However, Proposition \ref{cledesc} and Proposition \ref{cledesc2} provide one possible proof of the fact that this choice of starting point does not matter (this is also explained in \cite {WW}, using the loop-soup construction of \cite {ShW}). 
This  justifies a posteriori that when one iterates the CLE$_4$ in order to construct the nested versions, one does in fact not need to specify  where to continue the exploration. 
 
On the other hand, from this construction it is easy to estimate the expected area of the $\eps$-neighbourhood of CLE$_4$ and to see that the upper Minkowski dimension of CLE$_4$ is almost surely not greater than $15/8 = 2-1/8$. This follows from arguments in \cite{SchShW}:  One wants to estimate the probability that $d( z, \CLE_4 ) < r$. By conformal invariance it is enough to treat the case where $z$ is the origin in the unit disc. But we can note that the conformal radius of $D_t$ (in the SLE$_4 (-2)$) is comparable to $d (0, \partial D_t)$ by Koebe's $1/4$-Theorem, and 
that the log-conformal radius of $\D \setminus \CLE_4$ from the origin is just minus the exit time of $(- \pi, \pi)$ by the Brownian motion $B$ which is a  well-understood random variable (see \cite {PS} for instance). Hence, the asymptotic behaviour as $r \to 0$ of the probability that $d(0 , \CLE_4) < r$ can be estimated precisely. 

One can also prove the somewhat stronger statement that the Hausdorff dimension of CLE$_4$ is in fact equal to $15/8$, by using second moment bounds (i.e., bounds on the probability that two points $z$ and $z'$ are close to CLE$_4$), see \cite {NW}.

The nested CLE$_{4,m}$ and CLE$_4^M$ can then be defined by appropriately iterating independent CLE$_4$ carpets in the respective domains $O(z)$ (starting the explorations in the nested domains at the point 
where one did just close the loops). The definition of CLE$_4^M$ is almost word for word the same, just replacing $\tau(z)$ by $\tau_M (z)$ which is the first time at which the underlying Brownian motion $B$ exits the interval $(-M\pi, M\pi)$. A similar argument then shows that the upper Minkowski dimension of CLE$_4^M$ is not greater than $2 - (1 / (8M^2))$ (the $M^2$ term is then just due to Brownian scaling -- the exit time of $(- M \pi, M \pi)$ by Brownian motion is equal in distribution to $M^2$ times the exit time of $(- \pi, \pi)$).

\subsection{Coupling with the GFF}

To explain the coupling of CLE$_4$ with the GFF, we can first describe the coupling of a single radial SLE$_4(-2)$ process with a GFF. The whole coupling then follows iteratively from the strong Markov property and from the branching tree procedure.
The proof of the coupling of the radial SLE$_4 (-2)$ follows the steps of the coupling of the usual SLE$_4$ with the GFF, as explained for instance in \cite {SchSh2, ShQZ}. It is based on the following observations:

\begin{enumerate}
\item The Hadamard formula (see for instance \cite{IzKy}) gives the time-evolution of the Green's function under Loewner flow: for any two points $x$ and $y$ in $\D$, the Green's function $G_{D_t} (x,y)$ evolves until $\min\{ \sigma (z, 0), \sigma (w,0)\}$
according to 
$$ d G_{D_t} (z,w) = -  2\pi P^{f_t(z)}P^{f_t(w)} dt,$$
where by $P^z$ we denote the Poisson kernel in $\D$ seen from $z$ and targeted at $\xi_t$ as before.
\item The cross-variation  $d \langle B^z, B^w\rangle_t$ between the two local martingales $B^z$ and $B^w$ is equal to $4\pi^2 P^{f_t(z)}P^{f_t(w)} dt$, so that $$d G_{D_t}(z,w) = - \frac{1}{2\pi} d \langle B^z,B^w\rangle_t .$$
\item We can interpret $|B_t^z| / \pi $ (up to $\tau (z)$) as the harmonic measure in $D_t$ seen from $z$, of the boundary arc between the tip of the curve and the force point $U_t$ (the sign of $B_t^z$ describes which of the two arcs one considers).
\item {If we take $h_t(z)$ to be the harmonic extension to $D_t$ of the function that has constant value $\sgn (B_t)2\lambda$ on the boundary of $\partial D_t$ between the tip and the force point, then the mean of $h_t(z)$ is zero and for $\lambda = \sqrt{{\pi}/{8}}$ we have:
$$\d G_{D_t}(z,w) = - \d\langle h_t^z,h_t^w\rangle.$$}
\end{enumerate}
Using these observations, one can first couple the GFF with the radial SLE$_4 (-2)$ up to the first time at which it surrounds the origin, exactly following the arguments of \cite {ShQZ}. This defines a BTLS with harmonic function in $\{ - 2 \lambda, 0, 2 \lambda\}$. 
For those domains where the harmonic functions are zero, one can then continue the SLE$_4 (-2)$ iteration targeting another well-chosen point in that domain and proceed.  

This radial construction is arguably also the easiest one to explain that in fact, conditionally on the CLE$_4$, the labels $\eps_j$ are i.i.d. This is then just a consequence of the time-reversal of SLE$_4$, so that changing the orientation 
of the quasi-loop that traces the boundary of $O_j$ corresponds to a measure-preserving transformation of the driving Brownian motion (see for instance \cite {WW}). We come back to this in \ref {commenting}. 

\section {Comparisons of BTLS with CLE$_4^M$}

We  now derive Proposition \ref {maxbtls}, Proposition \ref {cledesc} and Proposition \ref {propnodisconnection}. In some sense, this section is the core of the paper. 

Throughout this section, $D$ will denote a simply connected planar domain with non-empty boundary (so that the previous definition of CLE$_4$ makes sense). 

\subsection{Proof of Proposition \ref{maxbtls}}

In this proof, $(C, h_C)$ will denote the CLE$^{M+1}_4$ and its corresponding harmonic function. 
Consider the triple $(\Gamma, (B, h_B), (C,h_C))$ where the two local sets $(C,h_C)$ and $(B, h_B)$ are conditionally independent given $\Gamma$ and where $B$ is a ($2\lambda M$)-BTLS. 
Recall from Section \ref{CLE4} that $(C, h_C)$ can be constructed using a branching radial SLE$_4$ exploration tree denoted by SLE$^r_4$. 

The two steps of the proof are then as follows: 
\begin{itemize}
	\item Given $(B, h_B)$, we argue using Lemma \ref{notouch} that when a branch of the SLE$^r_4$ exploration tree is in the process of tracing a loop of $C$ inside a connected component $O$ of $D\backslash A$, then the loop it generates has to be contained in $O$.
	\item From this we deduce that for all $z \notin B \cup C$, one has $|h_{B \cup C}(z)| \leq 2\lambda(M+1)$. We then conclude using Lemma \ref{mes}. 
\end{itemize}

\begin{figure}[ht!]
	\centering		
	\includegraphics[scale=0.8]{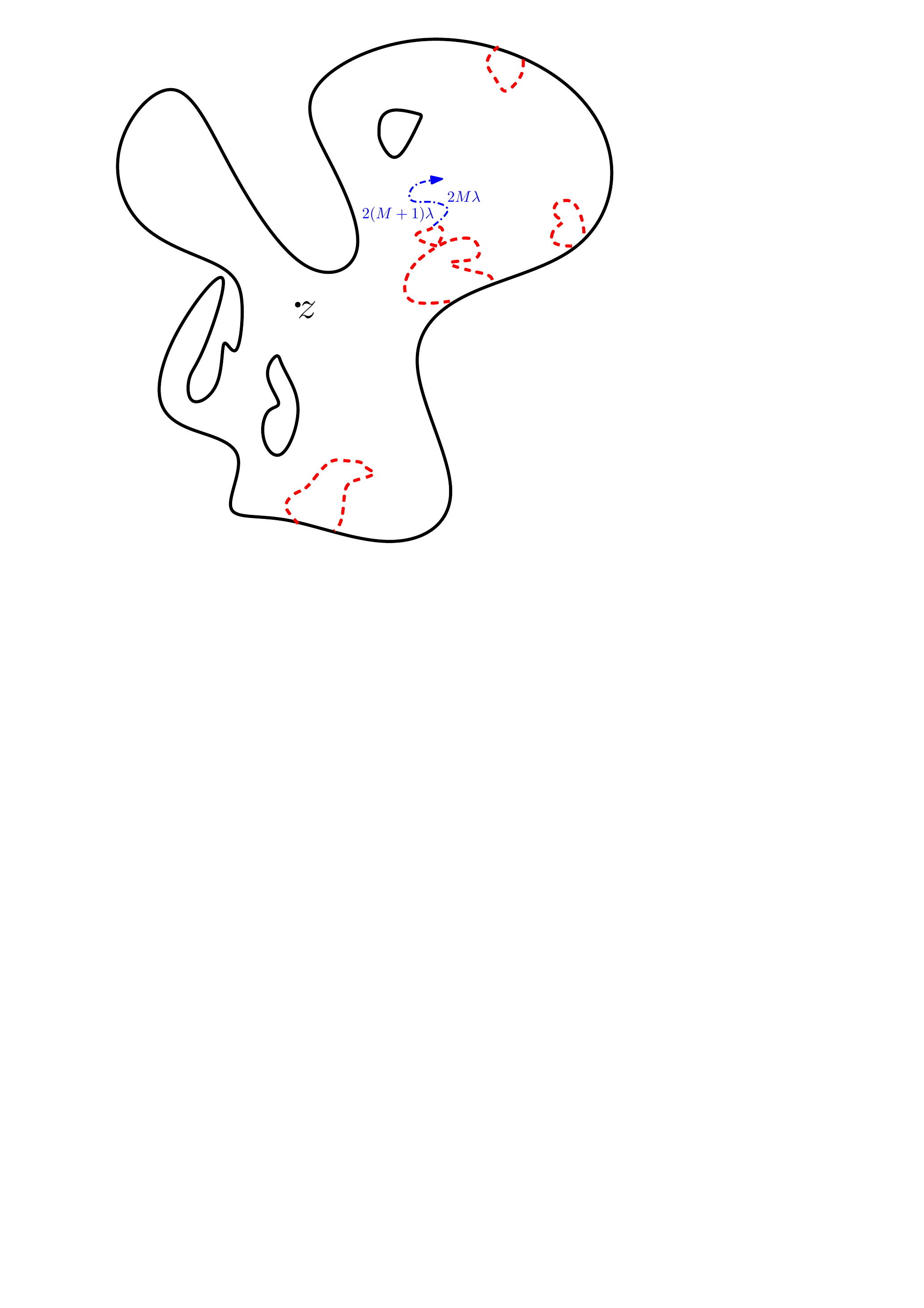}
	\caption{Sketch of the proof: in plain $\partial O_B(z)$; in dashed, the past of the radial exploration (used to define the CLE$_4^{M+1}$) that lies within $O_B(z)$. 
	When one explores a loop along the  dash-dotted interface: 
		It cannot hit the dashed part before completing the CLE$^{M+1}_4$ loop because of the almost sure properties of the radial SLE$_4 (-2)$ process.
		Also, it cannot touch the parts of $\partial O_B(z)$ that are away from the dashed part before completing the loop, because of the boundary hitting behaviour of the GFF level lines. }
	\label{fig:test}
\end{figure}

Let us now make it precise. For $z \in D$, we define by $O_B(z)$ (respectively $O_C (z)$) the connected component of $D\setminus B$ (resp. $D \setminus C$) containing $z$ when $z \notin B$ (resp. $z \notin C$).  Denote by $\nu^z$ the process obtained from the branch of the SLE$^r_4$ tree that is directed at $z$ and recall from Section \ref{CLE4} that $\nu^z([0,t])$ is a BTLS for all fixed $t$. Let $K^z_t$ be the hull of $\nu^z_t$ with respect to the point $z$, i.e. the complement of the open component of $D \setminus \nu^z[0,t]$ containing $z$.

\begin{claim}\label{loopsclose} Fix $t>0$ and $z\in D$. Define $D_t(z):= O_B(z) \setminus K^z_t$ and let $E_{t,z}$ be  event that 
 $\nu^z(t)$ is in the middle of tracing a loop of $C$ and that  $\nu^z(t) \in O_B(z)$.
Suppose that the  probability  of $E_{t,z}$ is strictly positive. 
Then, on the event $E_{t,z}$, the time  $\tau = \sup \{t': \nu^z[t,t'] \subset D_t(z)\}$ is finite and it is exactly the time at which the exploration closes the loop that it 
is tracing at time $t$. Thus, the starting and ending point of the loop correspond to at most two prime ends of $\partial D_\tau(z)$. 
\end{claim}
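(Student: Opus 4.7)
The plan is to combine the level-line interpretation of the current loop-tracing phase of $\nu^z$ with Lemma~\ref{notouch}, transferred to the appropriate subdomain via Corollary~\ref{change boundary2}. Let $(\beta_s)$ denote the driving Brownian motion of the branch $\nu^z$ and set $t_0:=\sup\{s\le t:\beta_s\in\pi\mathbb Z\}$ and $t_0':=\inf\{s\ge t:\beta_s\in\pi\mathbb Z\}$, so that $(t_0,t_0')$ is the excursion of $\beta$ away from $\pi\mathbb Z$ corresponding to the current loop. The inequality $\tau\le t_0'$ is immediate: at time $t_0'$ the curve returns to $\xi_{t_0}\in \partial K_{t_0}^z\subset \partial K_t^z$, hence exits $D_t(z)$, and $t_0'<\infty$ almost surely because $\beta$ returns to $\pi\mathbb Z$ in finite time. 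The content of the claim is therefore to show the reverse inequality $\nu^z[t,t_0')\subset D_t(z)$.

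According to Section~\ref{CLE4}, during the excursion $(t_0,t_0')$ the branch $\nu^z$ is (up to time change) a chordal SLE$_4$ in $D_{t_0}$, hence a generalized level line of the GFF in $D_{t_0}\setminus\nu^z[t_0,t]$ with the canonical $\pm\lambda$ values on the two sides of the partial loop, relative to the ambient harmonic value $\alpha_{t_0}:=2\lambda\beta_{t_0}/\pi$ on $\partial D_{t_0}$. Note $|\alpha_{t_0}|\le 2M\lambda$, because otherwise the branch targeting $z$ would already have terminated before $t_0$.

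I would then condition on $\mathcal F_B\vee\sigma(\nu^z[0,t])$ and invoke Corollary~\ref{change boundary2} to re-express the continuation $\nu^z|_{[t,t_0')}$ as a generalized level line of a GFF on the smaller domain $D_t(z)=O_B(z)\setminus K_t^z$, with the extra boundary value $h_B$ on the new boundary piece $\partial O_B(z)\cap\partial D_t(z)$. Lemma~\ref{notouch} then excludes accumulation of this level line on $\partial D_t(z)\setminus\{\xi_{t_0}\}$: on $\partial K_t^z\setminus\{\xi_{t_0}\}$ this follows from the simple-curve property of radial SLE$_4(-2)$ together with the $\pm\lambda$ boundary values on the two sides of the already-traced partial loop, and on $\partial O_B(z)\cap\partial D_t(z)$ it is verified using the bounds $|h_B|\le 2M\lambda$ and $|\alpha_{t_0}|\le 2M\lambda$ together with the fact that the loop-tracing phase operates at a level offset by $\pm\lambda$ from $\alpha_{t_0}$. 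The finitely many prime ends where this sign condition is not strict are disposed of by Lemma~\ref{finite points}. Combined with $\tau\le t_0'$, this yields $\tau=t_0'$, as claimed.

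The main obstacle I anticipate is precisely the last application of Lemma~\ref{notouch} on $\partial O_B(z)$: the a priori bounds on $|h_B|$ and $|\alpha_{t_0}|$ are of the same magnitude $2M\lambda$, so the sign condition has no slack, and one must leverage the extra $\lambda$ margin provided by the loop being a level line at the shifted level $\alpha_{t_0}\pm\lambda$. This is exactly the margin that distinguishes $\CLE_4^{M+1}$ from $\CLE_4^M$ and explains why Proposition~\ref{maxbtls} is stated at level $M+1$ rather than at the optimal $M$.
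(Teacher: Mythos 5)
Your overall strategy coincides with the paper's: view the continuation of the branch after time $t$ as a generalized level line, handle $\partial K_t^z$ via the self-touching properties of the radial SLE$_4(-2)$ (it only touches itself when closing a loop), and rule out hitting $\partial O_B(z)$ via Lemma \ref{notouch} together with the bound $|h_B|\le 2M\lambda$. The excursion bookkeeping with $(t_0,t_0')$ and the bound $\tau\le t_0'$ is correct and makes explicit something the paper leaves implicit.

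The gap is exactly where you locate it, but your proposed resolution does not work. If all you record is $|\alpha_{t_0}|\le 2M\lambda$ and that the interface sits at height $\alpha_{t_0}\pm\lambda$, then for an intermediate excursion (say $\alpha_{t_0}=0$, interface at height $\lambda$) the recentered boundary data $h_B-\lambda$ on $\partial O_B(z)$ ranges over $[-2M\lambda-\lambda,\,2M\lambda-\lambda]$ and can lie in $(-\lambda,\lambda)$ on boundary arcs of positive harmonic measure; Lemma \ref{notouch} then gives nothing there, and indeed such intermediate quasi-loops can perfectly well exit $O_B(z)$. Lemma \ref{finite points} cannot repair this: it is a statement about bounding a harmonic function given control near all but finitely many prime ends, not a statement preventing a level line from accumulating on a boundary arc, and in any case the set where your sign condition fails is not a finite set of prime ends. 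The missing observation --- which is how the paper closes the argument --- is that on $E_{t,z}$ the curve is tracing a loop of $C=\CLE_4^{M+1}$, so the driving motion is in $\pm(M\pi,(M+1)\pi)$; hence $\alpha_{t_0}=\pm 2M\lambda$ and the offset is outward, i.e.\ the interface is a $\pm 2M\lambda$ versus $\pm 2(M+1)\lambda$ level line at height $\pm(2M+1)\lambda$. After recentering, the boundary data on $\partial O_B(z)$ is $h_B\mp(2M+1)\lambda$, which satisfies $\le -\lambda$ (resp.\ $\ge \lambda$) everywhere because $|h_B|\le 2M\lambda$. Since the hypothesis of Lemma \ref{notouch} is the non-strict inequality, there is in fact no ``lack of slack'' to worry about and no exceptional prime ends to excuse; with that identification of the level, your argument becomes the paper's.
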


\begin{proof}[Proof of Claim \ref{loopsclose}]
Let us notice that $D_t(z)$ satisfies the condition of Lemma \ref{defll}. Thus, conditioned on $(B, h_B)$, $(\nu^z[0,t], h_{\nu^z[0,t]})$ and $E_{t,z}$ the process $\tilde \nu(s) :=(\nu^z(t+s), s \geq 0)$ is a generalized level line in the domain $D_t$ up to the time $\tau': = \sup \{s: \nu^z([t,t+s]) \subset D_t(z)\}$. On the event $E_{t,z}$, the path $\eta_z$ is locally tracing a level line with heights $\pm (2 \lambda(M+1))$ vs. $\pm (2 \lambda M)$ at time $t$.  

We know from Section \ref{CLE4} that almost surely the SLE$^r_4$ exploration process touches itself only when it closes a loop and stays at a positive distance of any other previously visited point.

It remains to show that $\tilde \nu_s$ does not touch any point of $\partial D_t\backslash K_t^z$. Take $J$ any open set of $D_t$ such that $d (J, K_t^z)\geq \epsilon$. From Lemma \ref{BPLS}, the boundary condition of  $h_{B \cup \nu^z[0,t]}$ in  $\partial J \cap \partial D_t$ are equal to those of $h_B$, thus their absolute value is not larger than $2M\lambda$. Lemma \ref{notouch} now lets us conclude that $\tilde \nu(s)$ does not touch $\partial J \cap \partial D_t$ before $\tau'$. The claim follows by taking the union over  $\epsilon>0$.
\end{proof}
	
\begin{claim}
Almost surely for all $z \notin B \cup C$, we have $|h_{B \cup C}(z)| \leq2\lambda(M+1)$.
\end{claim}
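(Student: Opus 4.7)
My approach is to apply the maximum principle, via Lemma \ref{finite points}, to the bounded harmonic function $h_{B \cup C}$ on the connected component $U_z$ of $D \setminus (B \cup C)$ containing $z$. The crucial localization comes from Claim \ref{loopsclose}.

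\textbf{Localization.} I would first observe that the innermost $\CLE_4^{M+1}$-loop $\ell_z := \partial O_C(z)$ around $z$ is contained in $O_B(z)$. This follows by applying Claim \ref{loopsclose} to the branch $\nu^z$ of the SLE$^r_4$ exploration tree targeted at $z$, at some time $t$ during the excursion that traces $\ell_z$: since $\ell_z$ eventually surrounds $z \in O_B(z)$, one can choose $t$ so that $\nu^z(t) \in O_B(z)$. Consequently $U_z \subset O_C(z)$, and $\partial U_z \setminus \partial D \subset \ell_z \cup B$.

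\textbf{Boundary behavior of $h_{B\cup C}$.} I would then identify the boundary values of $h_{B \cup C}$ at each prime end of $U_z$. Near a prime end $w \in \ell_z \setminus B$, a small neighborhood $V$ of $w$ avoids both $B$ and every other loop of $C$, so $(B \cup C) \cap V = \ell_z \cap V$. The local absolute-continuity argument of Proposition \ref{change boundary} together with the Schramm--Sheffield coupling producing $\ell_z$ then force $h_{B \cup C}$ to inherit the boundary behavior of $h_C$ on $\ell_z \cap V$ from the $O_C(z)$-side, namely the limit $\pm 2\lambda(M+1)$. Near a prime end $w \in B \setminus \ell_z$, the analogous argument with $V \cap C = \emptyset$ shows $h_{B \cup C}$ has the same boundary behavior as $h_B$, in particular bounded in absolute value by $2\lambda M$. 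Near $w \in \partial D$, boundary value $0$. All three cases give $\limsup |h_{B \cup C}| \le 2\lambda(M+1)$.

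\textbf{Conclusion via maximum principle.} Outside the exceptional set of prime ends corresponding to $\ell_z \cap B$, we therefore have a boundary bound of $2\lambda(M+1)$. Applying Lemma \ref{finite points} (or, if $\ell_z \cap B$ is infinite, adapting its proof by using the logarithmic blow-up bound on $|h_{B \cup C}|$ near the local set to argue that the contribution of an $\eps$-neighborhood of the exceptional set to the relevant harmonic-measure integral vanishes as $\eps \to 0$) yields $|h_{B \cup C}(z)| \le 2\lambda(M+1)$, as required.

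\textbf{Main obstacle.} The most delicate step is the boundary identification on $\ell_z \setminus B$: verifying rigorously that $h_{B \cup C} \to \pm 2\lambda(M+1)$ from the $U_z$-side. This uses the locality of the Schramm--Sheffield coupling and Proposition \ref{change boundary} to reduce to the case where only $\ell_z$ is present locally. A lesser concern is whether the set $\ell_z \cap B$ forces an adaptation of Lemma \ref{finite points}.
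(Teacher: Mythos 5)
Your overall strategy --- identify the boundary values of $h_{B\cup C}$ on the component of $z$ in $D\setminus(B\cup C)$ as those of $h_B$ or $h_C$ up to an exceptional set of prime ends, then conclude by a maximum principle via Lemma \ref{finite points} --- is the same as the paper's, but two steps do not hold as written. First, the localization is not justified: the innermost $\CLE_4^{M+1}$ loop around $z$ need not intersect $O_B(z)$ at all (it can surround $z$ while staying in $D\setminus O_B(z)$), so you cannot always ``choose $t$ so that $\nu^z(t)\in O_B(z)$''; in that case $O_B(z)\subset O_C(z)$ and one argues directly that $h_{B\cup C}(z)=h_B(z)$, which is the first case in the paper's proof and needs its own (easy) treatment. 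Moreover, even when the exploration is inside $O_B(z)$ at some time $t$ while tracing the loop, Claim \ref{loopsclose} only confines the part of the loop traced after time $t$ to $D_t(z)$; it does not give $\ell_z\subset O_B(z)$ --- the loop may cut through $\overline{O_B(z)}$ and separate components of $B$ from each other, touching $B$ at the prime ends where the excursion begins and ends.

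Second, and this is the real gap, you treat the finiteness of the exceptional set $\ell_z\cap B$ as a lesser concern, to be handled by adapting Lemma \ref{finite points} using only the logarithmic blow-up bound. That adaptation fails in general: if the exceptional set of prime ends had non-vanishing harmonic measure seen from $z$, the harmonic-measure integral over its $\eps$-neighbourhood stays bounded below while $|h_{B\cup C}|$ is only controlled there by a bound of order $\log(1/\eps)$, so its contribution need not vanish; the $O(\eps|\log\eps|)$ estimate in the proof of Lemma \ref{finite points} relies crucially on there being finitely many exceptional points. The entire purpose of Claim \ref{loopsclose} is to supply exactly this finiteness: combined with Lemma 3.11 of \cite{SchSh2}, it shows that the boundary data of $h_{B\cup C}$ on the relevant component agree with those of $h_B$ or of $h_C$ everywhere except at \emph{at most two} prime ends (the start and end of the loop-tracing excursion), and only then is Lemma \ref{finite points} applicable as stated. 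Your boundary identification on $\ell_z\setminus B$ via Proposition \ref{change boundary} is a reasonable sketch of what the paper obtains from Lemma 3.11 of \cite{SchSh2}, but without the two-prime-end control the argument does not close.
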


\begin{proof}
Let $(z_n:n\in \N)$ be a dense subset of $D$, such that for all $n\in \N$ the event $z_n\notin B$ has positive probability. It suffices to show that for all $n\in \N$ a.s.  $|h_{B \cup C}(z_n)| \leq2\lambda(M+1)$. On the event $z_n\notin B$ we have the following possibilities:

If $E_{t,z_n}$ does not occur for any rational $t > 0$, then $O_B(z_n)$ is contained in $O_{C}(z_n)$. Thus, $|h_{B \cup C}(z_n)| = |h_B(z_n)| \leq 2\lambda(M+1)$.

If $E_{t,z_n}$ occurs for some rational $t > 0$, from Claim \ref{loopsclose} we deduce that either $O_{C}(z_n) \subset O_B(z_n)$ or the $\CLE^{M+1}_4$ loop surrounding $z$ separates some components of $B$ from the others, see Figure \ref{contain}. Let us call $L(z_n)$ the connected component of the complement of $B \cup C$ that has this loop as part of its outer boundary. 
Importantly (see Lemma 3.11 of \cite{SchSh2}), the boundary conditions of $h_{B \cup C}$ in $L(z_n)$ are given by those of $h_B$ or $h_{C}$ everywhere but at (at most) two prime ends corresponding to the beginning and the end of the relevant $\CLE^{M+1}_4$ loop. In this case the claim follows from Lemma \ref{finite points} applied to the complement of $L(z_n)$ in $D$ (note that because the CLE$_4^{M+1}$ loop is at positive distance of $\partial D$ and because of the BTLS condition for $B$, the complement of $L(z_n)$ can have only finitely many connected components).
\end{proof}
\begin{figure}[ht!]
	\centering		
	\includegraphics[scale=0.71]{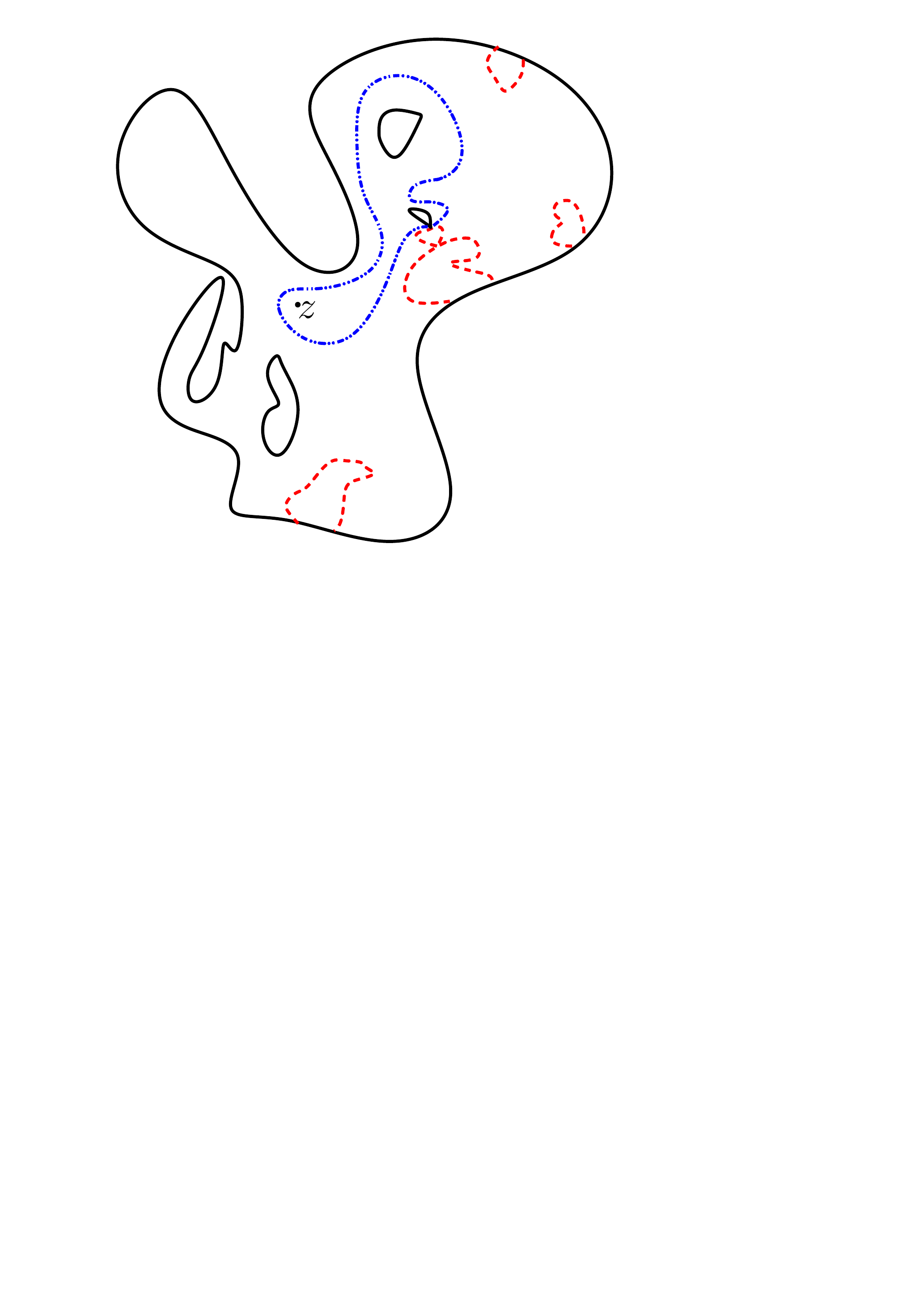}
	\caption{The continuous line represents $\partial O_B$, the dash-dotted line represents the recently closed loop and the dashed line represents the borders of the past loops traced by SLE$_4^r$.}
	\label{contain}
\end{figure}

Proposition \ref{maxbtls} now follows from Lemma~\ref{mes} applied to $A=C$ (noting that the set $C$ satisfies 
Condition ($*$)).

\subsection{Proof of Proposition \ref{cledesc}}

The proof of Proposition \ref {cledesc} goes along the same lines as that of Proposition \ref{maxbtls}. The difference lies in the fact that this time, $C$ is a CLE$_4^M$ and that 
$h_A \in \{ - 2 \lambda M, 2 \lambda M \}$.  Thus, with the same notations as before, the boundary conditions on $\partial D_t (z) \setminus K_t^z$ are in  $\{ - 2M \lambda, 2M \lambda \}$.
As above, we conclude from Lemma \ref{notouch} that the part of the radial SLE$_4 (-2)$ drawing $2 \lambda (M-1)$ versus $2 \lambda M$ level-line loop cannot exit $O_B(z)$ before 
completing that loop (one has to modify Figure \ref {fig:test}, so that the dash-dotted interface $\eta$ is now a $2M \lambda$ vs.  $2(M-1)\lambda$ interface, and the continuous boundary data is $\pm 2M \lambda$). 

Hence, it follows using the same argument as before that $|h_{A \cup C}| \le 2 \lambda M$, and then using Lemma~\ref {mes} that $A \subseteq C$ almost surely.  
But this means that for all $z$,  $|h_{A \cup C} (z)|  = 2 \lambda M = | h_A (z) |$ almost surely. Again, using Lemma \ref {mes}, we see that $C \subseteq A$ almost surely.  

\subsection {Proof of Proposition \ref {propnodisconnection}}

Suppose now that $A$ is a $K$-BTLS such that with positive probability, there exists a connected component of $A$ that is disconnected from $\partial D$. We choose some $M$ such that $2 \lambda ( M -1 )  \ge K$. We have just shown that 
almost surely, $A \subseteq C$ where $C$ is the CLE$_4^M$ coupled with the GFF.

On the other hand, with positive probability, $D \setminus A$ contains an annular open region that disconnects this connected component of $A$ from $\partial D$. Hence, there exists a deterministic such annular region $O$  
such that with positive probability, $O$ is in $D \setminus A$ and disconnects a connected component of $A$ from $\partial D$ (we call this event $E= E_O$).  

Given $A$ and $h_A$, the conditional distribution of $\Gamma^A$ is a GFF. It follows from Lemma \ref{abscnty} that on the event $E_O$, the conditional distribution of $\Gamma^A$ restricted to $O$ 
is mutually absolutely 
continuous with respect to the conditional distribution of $\Gamma$ itself restricted to $O$. It is possible to show, using Corollary \ref{change boundary2} and the fact that with positive probability the radial SLE$_4(-2)$ makes a loop inside $O$,  that with positive probability $C$ does not intersect the interior part of the complement of $O$. But this contradicts the fact that $A \subseteq C$.

{\section{BTLS with two prescribed boundary values} \label{S6}

In this section, we  describe the class of BTLS such that the harmonic function can only take two prescribed values and we determine for which values such a set  does exist.

Some aspects of the following discussion are strongly related to the $\kappa=4$ case of boundary conformal loop ensembles (and their nesting) as introduced and studied 
in \cite {MSW} (that was written up in parallel to the present paper) for general $\kappa$.

\subsection{A first special example}
Let us first describe in some detail one specific example. 
Consider a (zero boundary) GFF in the unit disc and fix two boundary points, say $-i$ and $i$. 
Consider the level line of this GFF {(i.e. for all $t\geq 0$, the curve that satisfies condition ($**$) with $F_0=0$) }.  This is an SLE$_4(-1;-1)$ from $-i$ to $i$ that is coupled with the GFF as a BTLS \cite{MS1}. It is known that this is a simple (boundary touching) continuous path $\eta$ from $-i$ to $i$ in the closed disc, and that its Minkowski dimension is almost surely equal to $3/2$. It is measurable with respect to the GFF \cite{MS1} and thus we often say that we explore the GFF to find $\eta$.

The harmonic function $h_\eta$ associated to this level line can be described as follows: First notice that the complement of the curve $\D\setminus \eta$ is a union of countably many connected  components $(D^1_j)_{j \in J}$. Any component lies either to the right or to the left of the level line (if one views the level line as going from $-i$ to $i$). In each component $D_j^1$, 
the harmonic function has boundary conditions $0$ on $\partial D_j^1 \cap \partial \D$. On $\partial D_j^1 \cap \eta$ the boundary condition is either $\lambda$ or $-\lambda$, depending on whether $D_j^1$ is on the left or on the right of $\eta$. 

As inside each component $D_j^1$ there is an independent GFF with these boundary conditions, we can iterate: Suppose for example that the $D_j^1$ lies to the right of $\eta$ so that $\partial D_j^1$ is divided into two arcs, one of which is an excursion of 
$\eta$ away from the $\partial D$, and the other one is a counter-clockwise arc from $x_j$ to $y_j$ of $\partial D$. We now explore the level line of this GFF  from $x_j$ to $y_j$ with this boundary data (i.e. for all $t\geq 0$, the curve that satisfies condition ($**$) with $F_0$ equal to the given boundary data). This level line has the law of an SLE$_4 ( -1)$ process from $x_j$ to $y_j$ and is again a simple curve. 
We proceed in a symmetric way in the connected components that lie above $\eta$. 
In this way, we obtain a new BTLS $A_1$, for which the harmonic function $h_{A_1}$ is defined via the boundary conditions indicated in Figure \ref{fig:firstiteration}.

The iteration then further proceeds by exploring additional level lines (which are usual SLE$_4 (-1)$ processes with just one marked point) in each of the remaining  connected components which have a part of $\partial D$ on their boundary. One then defines a second layer of loops and one proceeds iteratively. 
We then consider the closure $A_{-\lambda, \lambda}$ of the union of all the traced level lines.

Clearly, after any finite number of iterations in the previous construction, one has a $\lambda$-BTLS, and therefore a subset of the CLE$_4^2$ by our previous results. 
Hence, $A$ is itself a subset of the CLE$_4^2$ and therefore a BTLS. It is also easy to see that a given point $z \in D$ is almost surely contained in a loop cut out after finitely many iterations, so that the harmonic function associated to $A$ takes its values in $\{ - \lambda, \lambda \}$. 

\begin{figure}[ht!]	   
	\centering
		\includegraphics[scale=0.62]{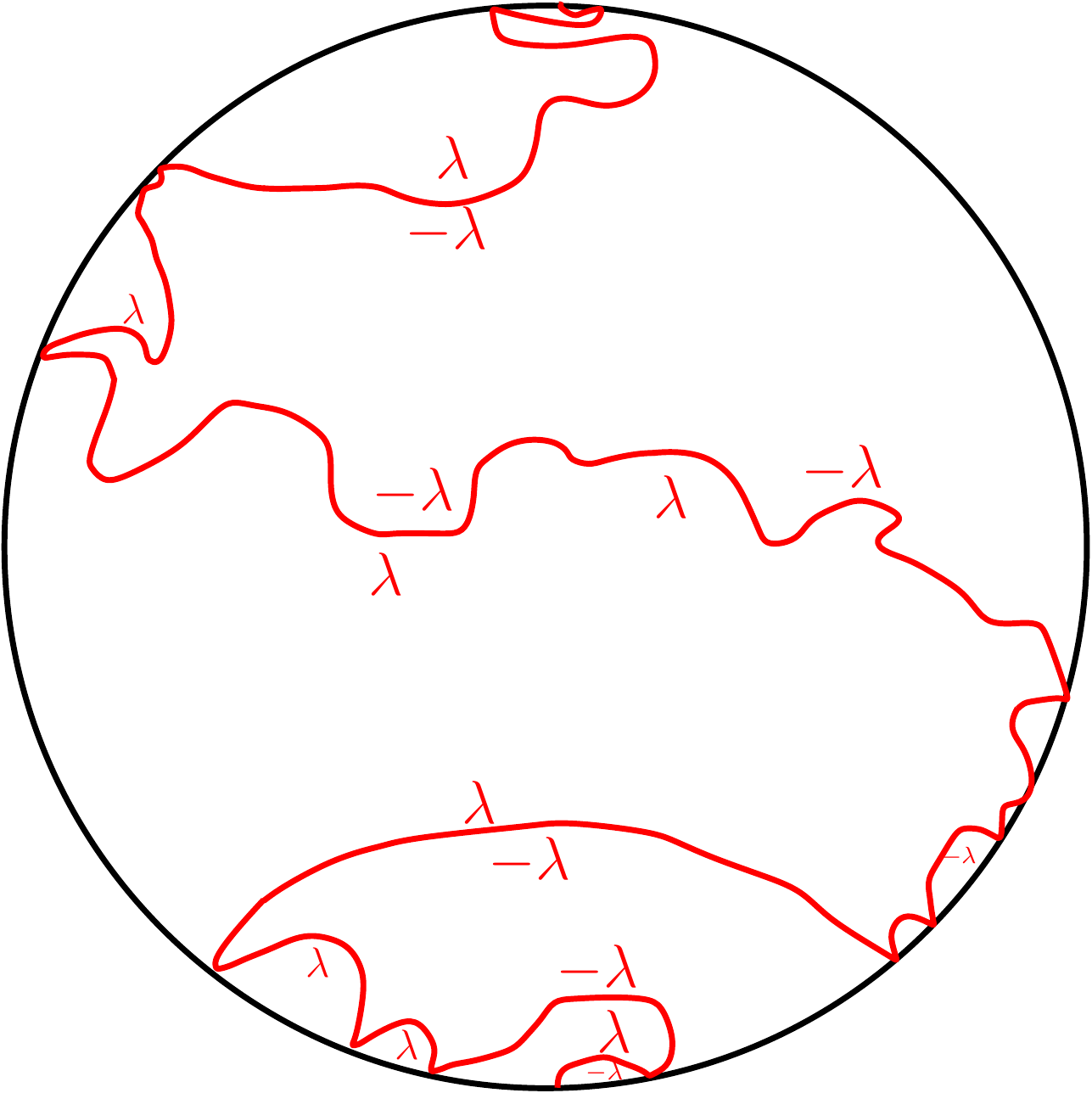}
		\includegraphics[scale=0.62]{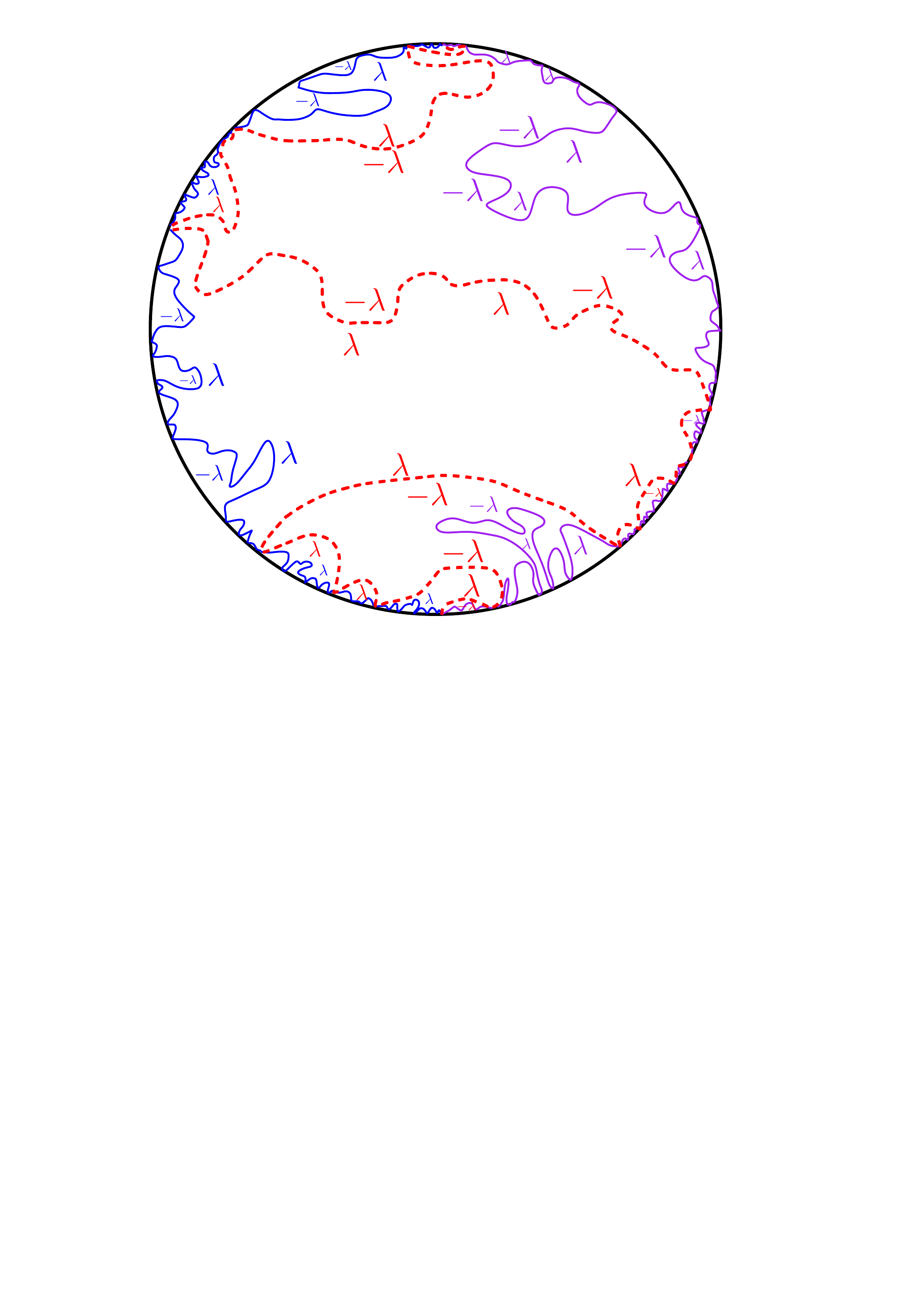}
	\caption {First iterations: The SLE$_4(-1;-1)$ and its boundary conditions on the left. The SLE$_4(-1)$ in the second layer on the right picture that creates loops with $\pm \lambda$ boundary conditions.}
	\label {fig:firstiteration}
\end{figure}

We can make the following observations about this set $A_{-\lambda, \lambda}$: 
\begin {itemize}
 \item As opposed to the CLE$_4$, the set $A_{-\lambda, \lambda}$ is just made out of the union of all SLE$_4$-type paths. For instance, each excursion of $\eta$ away from $\partial \D$
 is on the boundary between two connected components of $A_{-\lambda, \lambda}$ (one $+ \lambda$ loop to its right and one $-\lambda$ loop to its left). This indicates that the Hausdorff dimension of $A_{-\lambda, \lambda}$ is almost surely $3/2$ (we will come back to this later). 
 \item Because we have only used measurable sets in the construction $A_{-\lambda,\lambda}$, we know it is measurable function of the underlying GFF.
 \item  In addition, it comes out that the set $A_{-\lambda,\lambda}$ is the only BTLS with boundary values in $\{-\lambda,\lambda \}$. In particular its law does not depend on the arbitrary choices of the start and end points in the previous layered construction. We prove it below in a more general context.
\end {itemize}

\begin {rem}[A new construction of CLE$_4$] \label{newconsCLE4}Note that, similarly to the construction of CLE$_4^M$, we can iterate the construction of the set $A_{-\lambda, \lambda}$ to construct a BTLS with harmonic function that takes values in $\{ -2 \lambda, 2 \lambda \}$. Indeed, for each $z$ belonging to a given  countable dense  subset of $D$, 
one can iterate the construction in the component containing $z$ until the boundary values are in $\{-2\lambda, 2\lambda\}$. At each step in the construction one has a $2\lambda-$BTLS which 
is contained in CLE$_4^2$ and therefore, in the 
limit also, one still has a BTLS that is contained in the CLE$_4^2$. But from Proposition \ref{cledesc} it now follows that the obtained set is exactly the CLE$_4$. This therefore provides an alternative construction of CLE$_4$ (and of its iterated nested versions CLE$_{4,m}$ and CLE$_4^M$) that builds only on the coupling of the chordal SLE$_4 (-1;-1)$ and SLE$_4(-1)$ process with the GFF. Notice that in this case the measurability of the CLE$_4$ just follows from that of the respective SLE$_4(-1; -1)$ and SLE$_4(-1)$ processes.

As we point out in Section \ref{Sdim}, there is also a direct way to see that 
this set $A_{-\lambda, \lambda}$ and its iterates are thin (without using the relation to CLE$_4^M$ as we just did). Hence, we indeed obtain a stand-alone construction of CLE$_4$ and derivation of its properties. Interestingly, we do not know how to show that this construction gives the same law as CLE$_4$ without using the coupling with the GFF.
\end {rem}

\subsection {General sets $A_{-a,b}$ and proof of Proposition \ref {cledesc2}} 
We first construct a BTLS $A_{-a, b} $ such that $h_A$ takes its values in $\{ -a, b \}$, for all given pairs $(a,b)$ such that  $-a \leq 0 \leq  b$ 
and $b+a \ge 2 \lambda$. This generalizes our previous constructions of $A_{-\lambda,  \lambda}$ and of CLE$_4^M$ (that will be our $A_{-2M \lambda, 2M\lambda}$). 
We then prove their uniqueness, the monotonicity of $A_{-a,b}$ with respect to $a$ and $b$,  and we show that there exist no BTLS with $h_A \in \{ -a, b\}$ when
$b+a < 2\lambda$ (unless $a$ or $b$ are equal to $0$, in which case one can take the empty set).

\subsubsection*{Construction of $A_{-a,b}$ and measurability} 	We first the construct  $A_{-a,b}$ for some ranges of values of $a$ and $b$, and then describe the general case: 
	
	\begin {itemize}
\item \underline{\textit{$a=0$ or $b=0$:}} We set $A_{-a,b}=\emptyset$ and the corresponding harmonic function takes the value $0$ everywhere.

\item\underline{\textit{$a=-n_1\lambda$ and $b=n_2\lambda$, where $n_1$ and $n_2$ are positive integers:}} Note that similarly to the construction of CLE$_4$ in \ref{newconsCLE4}, we can iterate the construction of the set $A_{-\lambda, \lambda}$. Indeed, pick a countable number of dense $z \in D$ and iterate the construction in the component containing $z$ until the boundary values are in $\{-n_1\lambda, n_2\lambda\}$.

\item 
\underline{\textit{$a+b = 2 \lambda$:}} Set $c := (b-a)/2 \in (- \lambda, \lambda )$ and 
 repeat exactly the same construction as above, except that one now traces $c$-level lines i.e. $c-\lambda $ vs $c+ \lambda $ interfaces iteratively instead of $-\lambda$ vs. $\lambda$ interfaces. Exactly the same construction 
and the same arguments lead to the construction of a BTLS $A_{c - \lambda, c + \lambda}$ such that the corresponding harmonic function takes its values in $\{c- \lambda, c + \lambda \}$. These sets are called the boundary conformal loop ensembles for $\kappa=4$ in \cite {MSW}. Notice that in these sets each interior boundary arc is shared by two components of the complement.

\item \underline{\textit{$a+b = n\lambda$ where $n \geq 3$ is an integer}}: Define $c\in (-\lambda, \lambda)$ such that there exists two non-negative integers $n_1, n_2$ with $a=c-n_1\lambda$ and $b =c+n_2\lambda$. Starting from $A_{c - \lambda, c + \lambda}$ we now iterate copies of $A_{-(n_1 -1) \lambda,  n_2 \lambda}$ (resp. of $A_{-n_1\lambda,  (n_2 -1) \lambda}$) in the connected components of the complement of $A_{c -  \lambda, c +\lambda}$  depending on the value of the harmonic function.  
 
\item
\underline{\textit{General case with $b+a > 2\lambda$}}: by symmetry, we can suppose that $a > \lambda$. Define $c \in [0, \lambda)$ and $n_1, n_2 \ge 0$ such that $b = c + n_1 \lambda$ and $b - n_2 \lambda \in [-a  ,  -a + \lambda)$.
Note that $b - n_2 \lambda < 0 $ and that $n_2 \ge 2$. Now consider an $A_{b-n_2 \lambda,b}$. In the connected components where the harmonic function is $b$, we stop. In those components where 
the harmonic function is $b - n_2 \lambda$, we iterate copies of $A_{-d, -d + n_2 \lambda}$ where $d =  (a+ b) - n_2 \lambda \in [0, \lambda)$. 
In this way, we obtain a BTLS whose harmonic function takes values in $\{-a,b-d,b\}$. We stop in all components where the harmonic value is either $-a$ or $b$, and iterate copies of $A_{ d - n_2 \lambda, d }$ in the other components. The harmonic function of the resulting BTLS takes values in $\{-a,b-n_2\lambda,b\}$. We continue this way, iterating $A_{-d, -d + n_2 \lambda}$ or $A_{ d - n_2 \lambda, d }$, in the components labelled $b-n_2\lambda$ or $b-d$, respectively. 
At each step of the iteration, we have a BTLS that is contained in CLE$_4^M$ with, say, $M = n_1 + n_2 +1$. The closure of the union of all the  constructed sets is then the  desired set $A_{-a,b}$.

\end {itemize}~

We make the following observations about the $A_{-a,b}$ constructed above:
	\begin{enumerate}[(i)]
\item In the construction we only need to use level lines whose boundary values are in $[-a,b]$.
\item For a fixed point $z\in D$ a.s. we only need a finite number of level lines to construct the loop of $A_{-a,b}$ surrounding $z$.
\item From the measurability of the level lines used in the construction, it follows that the sets $A_{-a,b}$ are measurable with respect to the underlying GFF. 
\end{enumerate}

\subsubsection*{Uniqueness}  To show uniqueness, we follow   loosely the strategy of the proof of Proposition \ref{cledesc}. 

Suppose that $\tilde A$ is another BTLS coupled with the same GFF, such that $h_{\tilde A}$ takes its values in $\{ - a,b \}$ and such that conditionally on $\Gamma$, $A_{-a,b}$ and $\tilde A$ are independent. Consider some $z \notin \tilde A$ and denote by $O(z)$ the component of $z$ in $D \setminus \tilde A$.  Now, we claim that almost surely no level line in the construction of the component of $z$ in $D \setminus A_{-a,b}$ can make an excursion inside of $O(z)$: Indeed, suppose that with positive probability a level lines does an excursion inside $O(z)$. On this event, using (ii) we can consider the first level line entering $O(z)$. Then, on the one hand this level line cannot exit $O(z)$ through the boundary of $\tilde A$, due to  Lemma \ref{notouch} and (i). On the other hand, it is also almost surely a simple path. Thus, it cannot exit $O(z)$ at all and we obtain a contradiction. As this holds for a countable dense family of $z$, we obtain that $A_{-a,b}\subseteq \tilde A$. We conclude using Lemma~\
ref{mes} with $k=(-a+b)/2$.

In particular, this implies that the arbitrary choices of points in the construction of $A_{-\lambda,\lambda}$ and also in the constructions of $A_{-a,b}$ do not matter.

\subsubsection*{Monotonicity}

Suppose $[-a,b] \subset [-a', b']$ and $-a < 0 < b$ with $b+a > 2\lambda$. Start with $A_{-a, b}$, and then
explore $A_{a-a',a+b'}$ in all the connected components of its complement where the boundary values are $a$  and $A_{-a'-b, b'-b}$ in the others. We obtain a BTLS with boundary values in $\{-a', b'\}$. By uniqueness it follows that the obtained set is indeed equal to $A_{-a',b'}$ and by construction it contains $A_{-a,b}$.
 
\subsubsection*{There are no BTLS $A$ with $h_A \in \{ -a,b \}$ when $a$ and $b$ are non-zero and $a+b < 2\lambda$}
 First, one can discard the case where $-a$ and $b$ have the same sign because the mean value of the field has to remain $0$. When $-a < 0 < b$ and $b + a < 2 \lambda$,
suppose that $A$ is a BTLS with $h_A \in \{ -a , b \}$. Exploring $A_{-a-b, -a-b + 2 \lambda}$ in those connected components of the complement of $A$ 
 where the harmonic function $h_A = b$, we see that  $A \subset A_{-a, -a + 2 \lambda}$. In particular, 
 note that a connected component of $D \setminus A$ where the corresponding harmonic function $h_{A}$ is equal to $-a$ remains a connected component of
 $D \setminus A_{-a, -a + 2 \lambda}$. Such a connected component has boundary value $-a$, but has no boundary arc that is shared with a component of $D \setminus A_{-a, -a + 2 \lambda}$ where 
 the harmonic function is $-a+ 2 \lambda$ on the other side. 
 This leads to a contradiction with an observation made above, because we know from the construction of $A_{-a, -a  + 2 \lambda}$ that all interior boundary arcs are shared by two components of the complement of this set.

 \subsection{Some additional properties of the sets $A_{-a,b}$}

\subsubsection*{Labels of $A_{-a,b}$} 
\label {commenting}

Conformal invariance of the GFF implies the conformal invariance of $A_{-a,b}$ and thus we see that $\P ( h_{A_{-a,b}} (z) = -a )$ does not depend on $z \in D$. Using the fact that $\E ( (\Gamma,1) ) = 0$, we therefore see that for all $z \in D$, 
$$ \P ( h_{A_{-a,b}} ( z)  = - a ) = \frac {b}{a + b}\   \hbox { and }\     \P ( h_{A_{-a,b}} ( z)  = b ) = \frac {a}{a + b},$$
as one might have expected.
  
As mentioned in Section \ref{CLE4}, by combining the radial SLE$_4(-2)$ construction and the reversibility of SLE$_4$ one can show that it is possible to first sample the family $(O_j)$, and then the labels $\eps_j$ using independent fair coins. The idea is that one can define the symmetric radial SLE$_4 (-2)$ using a Poisson point process of SLE$_4$ bubbles, which is invariant under resampling of the orientations of the bubbles, see e.g. \cite {WW}. 

This conditional independence between the heights in different domains feature is 
specific to CLE$_4$.  For instance, for CLE$_4^M$ the existence of a correlation between 
the heights in different domains is clear from the construction. For $A_{-\lambda, \lambda}$, 
the situation is actually quite reversed: conditioned on $A_{-\lambda, \lambda}$, one  single fair coin toss that decides the sign of the harmonic function at the origin is enough to 
determine the harmonic function in all the other connected components of the complement of $A_{-\lambda, \lambda}$. This follows from the fact that any two neighbouring components have different heights.
In the case of $A_{c - \lambda, c + \lambda}$ for $c \not= 0$, one does not even need to toss a coin, as the asymmetry (and conformal invariance) makes it in fact possible to detect almost surely the sign of the harmonic function at the origin. 
From this point of view, it is  even quite intriguing that by iterating $A_{-\lambda, \lambda}$, one obtains a CLE$_4$ where the signs of the heights are independent in the different components. In fact it is possible to determine precisely to which extent the function $h_A$ is a measurable functions of $A$ when $A_{-a,b}$ --  this will be a topic of a follow-up note.
 
 \subsubsection* {Dimension of $A_{-a,b}$}\label{Sdim}
 
 The goal of the present paragraph is to derive the following fact:
 \begin {prop} For each given $z$, 
the random variable  $\log (\text{\emph{crad}} (z, \D )) - \log (\text{\emph{crad}} (z, \D  \setminus A_{-a, b}))$ is distributed like a constant times the exit time from $(- a \pi / (2 \lambda), b \pi / (2 \lambda))$ by a one-dimensional Brownian motion $B$ started from $0$. 

In particular, for each given $z$, the probability that $d (z, A_{-a ,b}) < r$ is (up to constants) comparable to $r^{s}$ for $s =  s_{-a,b}:= 2\lambda^2 /(b+a)^2$ as $r \to 0$. 
 \end {prop}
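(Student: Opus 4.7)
The strategy is to realize the loop of $A_{-a,b}$ around $z$ as the terminal state of a radial exploration whose log-conformal-radius clock, seen from $z$, coincides with the exit time of a Brownian motion from $(-a\pi/(2\lambda), b\pi/(2\lambda))$. For fixed $z\in D$ (WLOG $z=0\in\D$), I will set up a radial exploration process targeting $z$ that traces the boundary of the loop of $A_{-a,b}$ around $z$, in the spirit of the radial SLE$_4(-2)$ branching tree of Section \ref{CLE4}. In the symmetric case $a=b=2M\lambda$ this is exactly the $\CLE_4^M$ radial construction. For general $(a,b)$ with $a+b\geq 2\lambda$, one adapts the iterative chordal construction of $A_{-a,b}$ from Section \ref{S6} to always continue in the component still containing $z$, using the appropriate SLE$_4(\rho)$-type level lines with the boundary data prescribed in Section \ref{S6}. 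Each intermediate exploration is a thin local set whose harmonic function takes values in $[-a,b]$, and by Proposition \ref{cledesc2} the exploration terminates precisely when the trace encloses $z$, at which moment its complement at $z$ is the $z$-component of $D\setminus A_{-a,b}$.

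The second step is to identify $h_t(z)$ along the exploration with a time-changed Brownian motion, exactly as for radial SLE$_4(-2)$ in Section \ref{CLE4}. Using the Hadamard formula and the SLE--GFF coupling, $h_t(z)$ is a continuous martingale with quadratic variation $-dG_{D_t}(z,z) = 2\pi(P^{f_t(z)})^2\,dt$, and a direct computation gives $h_t(z) = (2\lambda/\pi)B^z_t$, where $B^z$ is the driving Brownian motion in the target-$z$ parametrization, with $d\langle B^z\rangle_t = 4\pi^2(P^{f_t(z)})^2\,dt$. Reparametrizing by log-conformal radius from $z$, namely
\[ s_t := \log\crad(z,D)-\log\crad(z,D_t)=\int_0^t 4\pi^2 (P^{f_u(z)})^2\, du, \]
gives $B^z_{t(s)}=\beta_s$ for a standard Brownian motion $\beta$. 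The exploration terminates the first time $h_t(z)\in\{-a,b\}$, equivalently the first exit time $\tau$ of $\beta$ from $(-a\pi/(2\lambda),b\pi/(2\lambda))$, and hence
\[ \log\crad(z,D)-\log\crad(z,D\setminus A_{-a,b}) = \tau, \]
which is the first assertion.

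The dimension statement then follows from the standard exit-time tail for Brownian motion on an interval of length $L=(a+b)\pi/(2\lambda)$, whose principal Dirichlet eigenvalue is $\pi^2/(2L^2)=2\lambda^2/(a+b)^2$. One gets $\P(\tau>T)\asymp e^{-2\lambda^2 T/(a+b)^2}$ as $T\to\infty$. Koebe's $1/4$-theorem gives $d(z,A_{-a,b})\le\crad(z,D\setminus A_{-a,b})\le 4\,d(z,A_{-a,b})$, so $\P(d(z,A_{-a,b})<r)\asymp\P(\tau>\log(C/r))\asymp r^{2\lambda^2/(a+b)^2}$, which is the claimed exponent $s_{-a,b}$.

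The main technical obstacle is the rigorous construction of the radial exploration in the asymmetric case, where the target-$z$ exploration must faithfully trace (part of) $A_{-a,b}$. The symmetric case $a=b=2M\lambda$ reduces immediately to the SLE$_4(-2)$ tree of Section \ref{CLE4}. For general $(a,b)$ one has to carry out the iterative chordal construction of Section \ref{S6} in a target-oriented way, using the absolute continuity/boundary-change lemmas (Lemma \ref{abscnty}, Corollary \ref{change boundary2}) to reduce each step to a radial SLE$_4(\rho)$-type process with adjusted boundary data, combined with the level-line theory of Section~3.2. Once this exploration is set up, Proposition \ref{cledesc2} forces the output to coincide with the loop of $A_{-a,b}$ around $z$, and the martingale identification of $h_t(z)$ above transfers without change because the quadratic variation depends only on the current domain $D_t$, not on the specific SLE variant being traced.
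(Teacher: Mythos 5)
Your argument is correct in substance, and its mathematical core is the same as the paper's: view the construction of $A_{-a,b}$, seen from $z$, as a continuously growing family of local sets, identify $h_t(z)$ with $(2\lambda/\pi)$ times a standard Brownian motion when time is measured by the decrease of $\log \crad(z,\cdot)$, observe that the clock stops exactly when this martingale first reaches $\{-a,b\}$, and finish with the Brownian exit-time tail (eigenvalue $2\lambda^2/(a+b)^2$ for an interval of length $(a+b)\pi/(2\lambda)$) together with Koebe's quarter theorem; your constants all check out. The difference is in how the growing family is produced. The paper does not build any target-$z$ radial exploration: it takes the chordal iterative construction of $A_{-a,b}$ from Section \ref{S6} as it stands, notes that the concatenation of the chordal SLE$_4(-1;-1)$ and SLE$_4(-1)$-type level lines is already a continuously increasing family of local sets with $h_t(z)\in[-a,b]$, and quotes the general fact (from \cite{MS1}) that for any such family $h_{R_t}(z)$ is $(2\lambda/\pi)$ times a Brownian motion in the log-conformal-radius parametrization; since the clock at $z$ only advances while the component containing $z$ is being explored, no re-rooting, no Hadamard/Poisson-kernel computation and no appeal to the uniqueness part of Proposition \ref{cledesc2} are needed. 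Your route — an asymmetric target-oriented radial exploration justified via Lemma \ref{abscnty}, Corollary \ref{change boundary2} and uniqueness — is viable, but the ``main technical obstacle'' you flag is precisely the extra work the paper's choice of growing family makes unnecessary; and your closing remark that the quadratic variation of $h_t(z)$ depends only on the current domain, not on the SLE variant, is exactly the general local-set martingale fact that replaces your radial-SLE-specific computation.
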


\begin{proof}
 Let us first focus on the case of the set $A = A_{- \lambda, \lambda}$.  Fix a point $z \in \D$ and note that the construction of the set $A$ is 
obtained via a continuously increasing family of sets $(A_t, t \ge 0)$ that correspond to the concatenation of the various chordal SLE$_4 (-1;1)$ and SLE$_4 ( -1)$ processes that one iterates. All these sets $A_t$ are clearly local sets, and the value of 
the corresponding harmonic function $h_t (z)$ at $z$ is always in $[- \lambda, \lambda]$ (as its boundary values are in $\{ 0, \lambda, - \lambda \}$). Furthermore, by definition it is a local martingale, and therefore a martingale. 
We know that it converges to either $+ \lambda$ or $- \lambda$ as $t \to \infty$. 

On the other hand, we know (see e.g.. \cite {MS1}) that when $R_t$ is a continuously increasing family of local sets then $h_{R_t} (z)$ evolves like a $2\lambda/\pi$ times the standard Brownian motion when parametrized by the decrease of the log-conformal radius 
of $\D \setminus R_t$ seen from $z$. This therefore implies that $\log (\crad (z, \D))  - \log (\crad (z, \D  \setminus A))$ is distributed like the exit time of $(- \pi/2, \pi/2)$ by a one-dimensional Brownian motion. The tail estimate then follows from \cite{SchShW}. 

Exactly the same argument can be applied to all $A_{-a, b}$'s that we have constructed -- one just needs to note that in our iterative procedure, all the iterations are independent and the harmonic functions $h_t$ always remain in $[-a , b]$.
\end{proof}

Note that this argument can be used to see that $A_{-\lambda,\lambda}$ is indeed a BTLS with upper Minkowski dimension almost surely not larger than $3/2$ and that CLE$_4$ obtained by iterations of $A_{-\lambda,\lambda}$ is indeed a BTLS (as it satisfies $(3)$). 
}

\medbreak

 \subsection* {Acknowledgements.} This work was supported by the SNF grant \#155922. The authors are part of the NCCR Swissmap. 
 The authors would like to thank the Isaac Newton Institute for Mathematical Sciences, Cambridge, as well as the Clay foundation, for  hospitality and support during the program \emph{Random Geometry} where a part of this work was undertaken. 
They also thank the anonymous referees for their careful reading and their comments.
 
\begin{thebibliography}{99}

\bibitem{JA} J. Aru. 
\newblock The geometry of the Gaussian free field combined with SLE processes and the KPZ relation. PhD thesis, 2015

\bibitem {ALS} J. Aru, T. Lupu, A. Sep\'ulveda. 
First passage sets of the 2D continuum Gaussian free field.
In preparation. 

\bibitem {AS} J. Aru, A. Sep\'ulveda. 
Two-valued local sets of the 2D continuum Gaussian free field: connectivity, labels and induced metrics.
In preparation.

\bibitem{Dub0}
J.~Dub{\'e}dat.
\newblock Commutation relations for {S}chramm-{L}oewner evolutions.
\newblock {Communication in  Pure and Applied Mathematics}, 60, 1792--1847, 2007.

\bibitem{Dub}
J.~Dub\'edat.
\newblock {SLE and the free field: partition functions and couplings}.
\newblock {Journal of the American Mathematical Society}, 22, 995--1054,
  2009.
 
 \bibitem{DS}
 { B.~Duplantier and S. Sheffield.
 	\newblock{ Liouville quantum gravity and {KPZ}}.
 	\newblock{Inventiones Mathematicae}, 185, 333--393, 2011.
 	}
  
\bibitem{GM}
J.B. Garnett and D.E. Marshall.
\newblock{Harmonic Measure}.
\newblock{Cambridge University Press, 2005},

\bibitem{HMP}
 X. Hu, J. Miller, and Y. Peres. 
 \newblock{Thick points of the Gaussian free field}.
  \newblock {Annals of Probability}, 38, 896-926, 2010.

\bibitem {HS}
Z.-X.  He, O. Schramm.
Fixed points, Koebe uniformization and circle packings. 
Annals of Mathematics, 137, 369-406, 1995.

\bibitem{IzKy}
K. Izyurov and K.~Kyt\"ol\"a.
\newblock {Hadamard's formula and couplings of SLEs with free field}.
\newblock { Probability Theory and related Fields}, 155, 35--69, 2013.

\bibitem {MS}
J.P. Miller and S. Sheffield. 
The GFF and CLE(4). Slides of 2011 talks and private communications.

\bibitem {MS1} J.P. Miller and S. Sheffield.
Imaginary Geometry I. Interacting SLEs, Probability  Theory and related Fields, 164, 553--705, 2016.

\bibitem {MS2} J.P. Miller and S. Sheffield.
Imaginary Geometry II.  Reversibility of SLE$_\kappa (\rho_1;\rho_2)$ for $\kappa \in (0,4)$. Annals of Probability, 44, 1647-1722, 2016.

\bibitem {MS3} J.P. Miller and S. Sheffield.
Imaginary Geometry III.  Reversibility of SLE$_\kappa$ for $\kappa \in (4,8)$. Annals of Mathematics, 184, 455-486, 2016.

\bibitem {MS4} J.P. Miller and S. Sheffield.
Imaginary Geometry IV: Interior rays, whole-plane reversibility, and space-filling trees. Probability  Theory and related Fields, to appear.

\bibitem {MSW}
J.P. Miller, S. Sheffield and W. Werner. 
CLE percolations.
arXiv preprint 1602.03884, 2016.

\bibitem {MWW}
J.P. Miller, S.S. Watson and D.B. Wilson.
Extreme nesting in the conformal loop ensemble, 
Annals of Probability, 44, 1013--1052, 2016.

\bibitem {MW}
J. Miller, H. Wu. 
Intersections of SLE paths: the double and cut point dimension of SLE.
Probability Theory and related Fields, 167,  45--105, 2017. 

\bibitem{NW}
{\c{S}}.~Nacu and W.~Werner.
\newblock {Random soups, carpets and fractal dimensions}.
\newblock {Journal of the London Mathematical Society},
  83, 789--809, 2011.

\bibitem{Nelson} E. Nelson.
Construction of quantum fields from Markoff fields, Journal of Functional Analysis,  12, 97-112, 1973. 

 \bibitem {PS}
 {S.C. Port and C.J. Stone. 
 Brownian
 motion
 and
 classical
 potential
 theory. {\em Academic Press}, 1978.}
 
\bibitem{PoWu}
E. Powell and H. Wu.
\newblock{ Level lines of the Gaussian Free Field with general boundary data}. Annales de l'Institut Henri Poincaré, to appear. 

\bibitem {QW}
W. Qian and W. Werner. 
    Coupling the Gaussian free fields with free and with zero boundary conditions via common level lines.
     arXiv preprint 1703.04350, 2017.

\bibitem {Roz} Yu. A. Rozanov.
\newblock {
Markov Random Fields}.
\newblock {\em Springer-Verlag} 1982.
  
  \bibitem {RY}  D. Revuz and M. Yor.
\newblock {
Continuous Martingales and Brownian Motion}.
\newblock {\em Springer-Verlag} 1999.

\bibitem {Sch} O. Schramm.
\newblock {
Scaling limits of loop-erased random walks and uniform spanning trees}.
\newblock {Israel Journal of Mathematics}, 118, 221--288, 2000.

\bibitem{SchSh}
O.~Schramm and S.~Sheffield.
\newblock {Contour lines of the discrete two-dimensional {G}aussian free field}.
\newblock {Acta Mathematica}, 202, 21--137, 2009.

\bibitem{SchSh2}
O.~Schramm and S.~Sheffield.
\newblock {A contour line of the continuum {G}aussian free field}.
\newblock {Probability  Theory and related Fields}, 157, 47--80, 2013.

\bibitem{SchShW}
O.~Schramm, S.~Sheffield, and D.~B. Wilson.
\newblock {Conformal radii for conformal loop ensembles}.
\newblock {Communications in Mathematical Physics}, 288, 43--53, 2009.

\bibitem {Se}
A. Sep\'ulveda.
\newblock{On thin local sets of the {G}aussian free field}. arXiv preprint 1702.03164, 2017.

\bibitem{She}
S.~Sheffield.
\newblock {Exploration trees and conformal loop ensembles}.
\newblock {Duke Mathematical Journal}, 147, 79--129, 2009.

\bibitem{ShQZ}
S.~Sheffield.
\newblock {Conformal weldings of random surfaces: SLE and the quantum gravity zipper}, { Annals of Probability}, 44, 3474--3545, 2016.

\bibitem{ShW}
S.~Sheffield and W.~Werner.
\newblock {Conformal Loop Ensembles: The Markovian characterization and the
  loop-soup construction}.
\newblock {Annals of  Mathematics}, 176, 1827--1917, 2012.

\bibitem{Wln}
W.~Werner.
\newblock {Some recent aspects of random conformally invariant systems}.
\newblock Ecole d'\'et\'e de physique des Houches LXXXIII, 57--99, 2006.

\bibitem {WWln2}
W. Werner. 
Topics on the GFF and CLE(4). Lecture Notes, 2016.

\bibitem {WW}
W. Werner and H. Wu.
\newblock {On conformally invariant CLE explorations}.
\newblock {Communications in Mathematical Physics}, 320, 637--661, 2013.

\end {thebibliography}

\end{document}